\theoremstyle{plain} 
\newtheorem{Theorem}{Theorem}[section]
\newtheorem{Lemma}[Theorem]{Lemma}
\newtheorem{Corollary}[Theorem]{Corollary}
\newtheorem{Proposition}[Theorem]{Proposition}
\newtheorem*{Proposition*}{Proposition}
\theoremstyle{definition}
\newtheorem{Definition}[Theorem]{Definition}
\newtheorem{Example}[Theorem]{Example}
\newtheorem*{Question*}{Question}
\theoremstyle{remark}
\newtheorem{Remark}[Theorem]{Remark}
\newtheorem*{Remark*}{Remark}
\newlength{\dhatheight}
\newcommand{\Spec}{\operatorname{Spec}}
\newcommand{\res}{\operatorname{res}}
\newcommand{\Sp}{\operatorname{Sp}}
\newcommand{\rig}{\operatorname{rig}}
\newcommand{\Hom}{\operatorname{Hom}}
\newcommand{\End}{\operatorname{End}}
\newcommand{\Aut}{\operatorname{Aut}}
\newcommand{\id}{\operatorname{id}}
\newcommand{\Q}{\operatorname{Q}}
\newcommand{\Frob}{\operatorname{Frob}}
\newcommand{\Mat}{\mathsf{Mat}} 
\newcommand{\GL}{\operatorname{GL}}
\newcommand{\Lie}{\operatorname{Lie}}
\newcommand{\KI}{K_{\infty}}
\newcommand{\CI}{\mathbb{C}_{\infty}}
\newcommand{\bF}{\mathbb{F}}
\newcommand{\bG}{\mathbb{G}}
\newcommand{\bP}{\mathbb{P}}
\newcommand{\bT}{\mathbb{T}}
\newcommand{\bZ}{\mathbb{Z}}
\newcommand{\cE}{\mathcal{E}}
\newcommand{\cF}{\mathcal{F}}
\newcommand{\cJ}{\mathcal{J}}
\newcommand{\cL}{\mathcal{L}}
\newcommand{\cM}{\mathcal{M}}
\newcommand{\cO}{\mathcal{O}}
\newcommand{\fa}{\mathfrak{a}}
\newcommand{\fd}{\mathfrak{d}}
\newcommand{\fj}{\mathfrak{j}}
\newcommand{\fm}{\mathfrak{m}}
\newcommand{\fX}{\mathfrak{X}}
\newcommand{\CC}{\mathsf{C}}
\newcommand{\fsf}{\mathfrak{sf}}
\numberwithin{equation}{section}
\title{Residue of special functions of Anderson $A$-modules at the characteristic graph}
\author{Quentin Gazda\thanks{Centre de Mathématiques Laurent Schwartz (CMLS), \'Ecole Polytechnique, Cour Vaneau F-91120 Palaiseau},\\ and Andreas Maurischat\thanks{FH Aachen University of Applied Sciences \& RWTH Aachen University, D-52062 Aachen}}
\date{}
\begin{document}

\maketitle

{\footnotesize
\tableofcontents
}

\section{Introduction}
\subsection*{Context and motivations}
Let $(C,\cO_C)$ be a geometrically irreducible smooth projective curve over a finite field $\bF$ and let $\infty$ be a closed point on it. Let $A=\cO_C(C\setminus \{\infty\})$ be the $\bF$-algebra of rational functions on $C$ that are regular away from $\infty$. Let $K$ be the function field of $C$. We denote by $\KI$ the completed local field of $C$ at the point $\infty$ and we let $\CI$ be the completion of an algebraic closure of $\KI$. \\

This paper deals with the following objects introduced in the pioneer work of Anderson \cite{anderson}.
\begin{Definition}[Anderson $A$-module]\label{def:A-module}
An $A$-module scheme $E$ over $\CI$ is called an \emph{Anderson $A$-module of dimension $d$} if the following holds:
\begin{enumerate}
\item As an $\bF$-vector space scheme over $\CI$, $E$ is isomorphic to $\bG_a^d$ and, 
\item The $L$-linear map $\partial a$, induced by the functorial action of $a\in A$ on the $\CI$-vector space $\Lie_E(\CI)$, satisfies $(\partial a-a\cdot\id)^d=0$. 
\end{enumerate}
We refer to Section \ref{sec:preliminaries} below for the details. 
\end{Definition}

If $C$ is the projective line over $\bF$ and $\infty$ is the point $[0:1]$, then $A$ is identified with the polynomial ring $\bF[t]$. In this situation, the \emph{Carlitz module} is a certain Anderson $A$-module scheme over $K$ playing a role similar to that of the multiplicative group scheme in the arithmetic of function fields (\emph{cf} \cite{gazdajunger}). It consists of the $\bF[t]$-module scheme $\CC$ over $K=\bF(\theta)$ which, as an $\bF$-vector space scheme, is $\bG_a$ and whose $\bF[t]$-module action is given by $t\cdot x:=\theta x+x^q$ for $x\in \bG_a(-)$. It is an Anderson $\bF[t]$-module of dimension one. \\

The Carlitz module is the simplest example of a \emph{Drinfeld module}, themselves encompassed in the definition of Anderson $A$-modules. \\
The arithmetic of Anderson modules is in many aspects comparable to that of abelian varieties, but with the fundamental difference that their coefficients are function rings or fields. Following this analogy, one can study their geometric features. For instance, one may define their corresponding ``cohomological realizations'' such as the analogue of the Betti (\emph{i.e.} singular) and that of the de Rham realization (see \emph{e.g.} \cite{hartl-juschka}).\\

In \cite{gazda-maurischat}, we constructed an $A$-module $\fsf(E)$ of \emph{special functions} attached to an Anderson $A$-module $E$, assembling prior existing construction in the literature. Roughly speaking (see Definition \ref{def:sf} for details), $\fsf(E)$ is of the sub-$A$-module of a certain completed tensor product
\begin{equation}\label{eq:completion}
A\hat{\otimes}_{\bF}E(\CI),
\end{equation}
relative to a certain metric on $E(\CI)$, consisting of elements $\psi$ for which $(1\otimes a)\psi=(a\otimes 1)\psi$ for all $a\in A$. Although the counterpart of the construction \eqref{eq:completion} does not make sense for abelian varieties (due to the lack of a tensor product over $\bF$), the resulting module $\fsf(E)$ is--whenever $E$ is uniformizable--canonically isomorphic to the dual of the \emph{Betti realization} of $E$ (\cite[Thm. 3.3]{gazda-maurischat}). \\

The prototype of such special functions is the \emph{Anderson-Thakur generating function} $\omega$, first introduced by Anderson--Thakur in \cite{andersonthakur} in the situation of the Carlitz module. It is given by the infinite product
\begin{equation}\label{eq:omega}
\omega(t)=\lambda_\theta \prod_{i=0}^{\infty}{\left(1-\frac{t}{\theta^{q^i}}\right)^{-1}}
\end{equation}
which converges in the Tate algebra $\CI\langle t \rangle$, where $\lambda_\theta$ is a fixed $(q-1)$th-root of $-\theta$ in $\CI$. The element $\omega(t)$ generates the free $\bF[t]$-module $\fsf(\CC)$ of special functions attached to $\CC$. \\

The purpose of the elements of $\fsf(E)$ is to play the same role for $E$ as that played by $\omega$ for the Carlitz module. Some fascinating features of $\fsf(E)$ are its relation to periods, function fields Gauss sums (known as \emph{Gauss-Thakur sums}) and, by the very recent work of Hermes Ferraro \cite{ferraro22,ferraro23}, to \emph{Pellarin type $L$-series}.  Special functions permit to extend results known for $\omega$ (universal Gauss-Thakur sums \cite{angles} and Pellarin's identity \cite{pellarin}) from $\CC$ to more general Anderson modules. \\

In \cite[Thm. 3.11]{gazda-maurischat} we also described a relation among $\fsf(E)$ and the \emph{period lattice} $\Lambda_E$ of $E$ which we now recall. Given $u\in A$ a non constant element such that $K/\bF(u)$ is a (finite) separable field extension--we say that $u$ is \emph{separating}--there is an isomorphism of $A$-modules:
\begin{equation}\label{eq:inverse-residue}
\delta_u:\mathfrak{d}_{A/\bF[u]}\otimes_A \Lambda_E \stackrel{\sim}{\longrightarrow} \fsf(E)
\end{equation}
where $\mathfrak{d}_{A/\bF[u]}$ is the different ideal relative to the extension $K/\bF(u)$. In the situation of $A=\bF[t]$, the ideal $\mathfrak{d}_{\bF[t]/\bF[u(t)]}$ is principally generated by the derivative $u'$ of $u$, and the corresponding isomorphism \eqref{eq:inverse-residue} maps the generator $u'(t)\otimes \tilde{\pi}$ to $\omega(t)$. Here, we wrote $\tilde{\pi}$ for the \emph{Carlitz period}, the function field counterpart of $2\pi i$. \\

Although \eqref{eq:inverse-residue} is functorial in $E$, it is non canonical as it depends on the choice of $u$. However, one can make the following observation: the class of $\mathfrak{d}_{A/\bF[u]}$ and that of $\Omega^1_{A/\bF}$ are mutual inverses to each other in the group $\operatorname{Pic}(A)$. It then becomes natural to ask whether \eqref{eq:inverse-residue} admits a ``canonical inverse'', taking the form
\begin{equation}\label{eq:residue}
\Omega^1_{A/\bF}\otimes_A  \fsf(E) \stackrel{\sim}{\longrightarrow} \Lambda_E.
\end{equation}
That the above does not depend on $u$ was already noticed by the authors in the situation of $A=\bF[t]$ (Proposition 3.13 in \emph{loc.\,cit.}).\\

The purpose of this text is to provide a positive answer to the above question, and to interpret \eqref{eq:residue} as a residue map. In this respect, we  extend the result of the second author in \cite[Prop.~3.8]{maurischat} to the general coefficient ring $A$. 

\subsection*{Constructions and results}
The main technicality of this note rests in the signification of \emph{what we mean} by a residue map. Let us fix some notations in order to be more precise: let $E$ be an Anderson $A$-module of dimension $d$ over~$\CI$. To pursue the presentation, we shall state precisely the construction of the completed module \eqref{eq:completion}: once an isomorphism $\kappa:E\cong \bG_a^d$ of $\bF$-vector spaces $\CI$-schemes is fixed--called \emph{a choice of coordinates}--we obtain a non-canonical identification among $A\otimes E(\CI)$ and $(A\otimes \CI)^d$. The Gauss norm on the latter induces a metric on the former, and one shows that this metric is independent of the choice of coordinates $\kappa$. Taking the completion with respect to this metric, we obtain \eqref{eq:completion}, interpreting it thusly as a module of ``$E(\CI)$-valued functions on the affinoid disk $\operatorname{Sp}(A\hat{\otimes}\CI)$''.\\

Let $\mathfrak{X}$ be the rigid analytic variety $(\Spec A\otimes \CI)^{\text{rig}}$; we shall review its construction, as well as sheaves of meromorphic functions on it, in Section \ref{sec:rigid-analytic-geometry} below. If one wishes to extend \eqref{eq:completion} to more general admissible opens $\mathfrak{U}$ of $\mathfrak{X}$, one encounters the following issue: surely, one wants ``$E(\CI)$-valued functions on $\mathfrak{U}$'' to be isomorphic to $\cO_{\mathfrak{X}}(\mathfrak{U})^d$ after the choice of $\kappa$. Yet, this is not a reasonable wish in case the dimension is bigger than one. Indeed, assume $d>1$ and suppose we have two choices of coordinates $\kappa$ and $\kappa':E\cong \bG_a^d$ which differ from one another by the automorphism
\[
\begin{pmatrix}
1 & & & \Frob_{\bF} \\
 & 1 &  & \\
 & & \ddots & \\
 & & & 1
\end{pmatrix}\in \Aut(\bG_a^d).
\]
Then, the module of ``$E(\CI)$-valued functions on $\mathfrak{U}$'' will also be identified with $\cO_{\mathfrak{X}}(\mathfrak{U}\cap \Frob_\bF(\mathfrak{U}))^d$, preventing the notion of `` $E(\CI)$-valued functions on $\mathfrak{U}$'' to be well-defined. \\

To make things independent on the choice of coordinates, we are required to work with admissible opens $\mathfrak{U}$ that we call \emph{$\tau$-costable}, meaning that $\mathfrak{U}\subseteq \Frob_{\bF}(\mathfrak{U})$. If $\underline{\mathfrak{X}}=(\operatorname{Cat}\mathfrak{X},\operatorname{Cov}\mathfrak{X})$ denotes the site on~$\mathfrak{X}$ corresponding to the \emph{strong Grothendieck topology}, we denote by $\,^\tau\underline{\mathfrak{X}}=(\operatorname{Cat}^\tau\mathfrak{X},\operatorname{Cov}^\tau\mathfrak{X})$ the induced site compound of $\tau$-costable admissible opens and coverings of $\underline{\mathfrak{X}}$ (we refer the reader to \emph{e.g.} \cite[\S 5.1]{bosch} for notions on Grothendieck topologies, which only appear at a basic level in this text). The \emph{$\tau$-costable site} $\,^\tau\underline{\mathfrak{X}}$ happens to be enough for our purpose (see Section \ref{sec:AoA}):

\begin{Proposition}\label{prop:intro-E-valuedfunctions}
Up to canonical isomorphisms, there exists a unique pair $(\mathcal{E},\iota)$ consisting of a sheaf $\mathcal{E}$ of $A\otimes A$-modules on $\,^{\tau}\underline{\mathfrak{X}}$ and a family $\iota=(\iota(\mathfrak{U}):A\otimes E(\CI)\to \mathcal{E}(\mathfrak{U}))_{\mathfrak{U}}$ of $A\otimes A$-linear morphisms indexed by objects $\mathfrak{U}$ of $\operatorname{Cat}^{\tau}\mathfrak{X}$ such that, for any choice of coordinates $\kappa$ and any such $\mathfrak{U}$, there exists an $A$-linear isomorphism $\kappa(\mathfrak{U}):\mathcal{E}(\mathfrak{U})\stackrel{\sim}{\to} \cO_{\mathfrak{X}}(\mathfrak{U})^d$ inserting in a pushout square of $A$-modules:
 \begin{equation}\label{eq:intro-diagram-E-functions}
\begin{tikzcd}
(A\otimes \CI)^d \arrow[r,"\kappa^{-1}","\sim"']\arrow[d,hookrightarrow] & A\otimes E(\CI) \arrow[d,"\iota(\mathfrak{U})"] \\
 \cO_\mathfrak{X}(\mathfrak{U})^d \arrow[r,"\kappa(\mathfrak{U})^{-1}"] & \mathcal{E}(\mathfrak{U})
\end{tikzcd}
\end{equation}
\end{Proposition}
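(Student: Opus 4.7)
The plan is to construct $\mathcal{E}$ by fixing one choice of coordinates, then verifying that the result is canonically independent of that choice; uniqueness will then follow formally from the pushout property. Fix $\kappa : E \cong \bG_a^d$. For each $\mathfrak{U} \in \operatorname{Cat}^\tau \mathfrak{X}$, set $\mathcal{E}_\kappa(\mathfrak{U}) := \cO_{\mathfrak{X}}(\mathfrak{U})^d$ with the inherited restriction maps, and define
\[
\iota_\kappa(\mathfrak{U}) : A \otimes E(\CI) \xrightarrow[\sim]{A \otimes \kappa} (A \otimes \CI)^d \hookrightarrow \cO_{\mathfrak{X}}(\mathfrak{U})^d,
\]
so that the pushout square \eqref{eq:intro-diagram-E-functions} holds tautologically with $\kappa(\mathfrak{U}) = \mathrm{id}$. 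The sheaf property of $\mathcal{E}_\kappa$ on $\,^\tau\underline{\mathfrak{X}}$ is inherited from that of $\cO_{\mathfrak{X}}$. To endow $\mathcal{E}_\kappa(\mathfrak{U})$ with an $A \otimes A$-module structure, I let the first factor act through $A \otimes 1 \hookrightarrow A \otimes \CI \to \cO_{\mathfrak{X}}(\mathfrak{U})$, and the second factor act through the Anderson $A$-module structure of $E$, which under $\kappa$ is realized by matrices in $\Mat_{d \times d}(\CI\{\tau\})$. The $\tau$-costability condition $\mathfrak{U} \subseteq \phi(\mathfrak{U})$, where $\phi : \mathfrak{X} \to \mathfrak{X}$ denotes the Frobenius, iterates to $\phi^i(\mathfrak{U}) \supseteq \mathfrak{U}$ for all $i \geq 0$, so pullback-then-restrict equips $\cO_{\mathfrak{X}}(\mathfrak{U})$ with a Frobenius endomorphism $\tau$ and thereby lifts the $\Mat_{d \times d}(\CI\{\tau\})$-action from $(A \otimes \CI)^d$ to $\cO_{\mathfrak{X}}(\mathfrak{U})^d$.

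For a second choice of coordinates $\kappa' : E \cong \bG_a^d$, the composite $M := \kappa' \circ \kappa^{-1}$ lies in $\GL_d(\CI\{\tau\})$ and hence, by the same mechanism, defines an $A \otimes A$-linear automorphism of $\cO_{\mathfrak{X}}(\mathfrak{U})^d$, functorial in $\mathfrak{U}$. This produces a sheaf isomorphism $\mathcal{E}_\kappa \xrightarrow{\sim} \mathcal{E}_{\kappa'}$ compatible with $\iota_\kappa$ and $\iota_{\kappa'}$, and the associativity of multiplication in $\CI\{\tau\}$ yields the cocycle condition for three choices. The sheaves $\mathcal{E}_\kappa$ thus assemble into a single pair $(\mathcal{E}, \iota)$, defined up to canonical isomorphism. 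Uniqueness is then formal: given another candidate $(\mathcal{E}', \iota')$, the isomorphisms $\kappa(\mathfrak{U})'$ provided for every $\kappa$ yield a compatible family $\kappa(\mathfrak{U})^{-1} \circ \kappa(\mathfrak{U})' : \mathcal{E}'(\mathfrak{U}) \xrightarrow{\sim} \mathcal{E}(\mathfrak{U})$ whose independence of $\kappa$ is forced by the commutativity of the pushout squares attached to $\kappa$ and $\kappa'$.

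The main obstacle I anticipate is verifying that a matrix $M \in \GL_d(\CI\{\tau\})$ genuinely defines an $A \otimes A$-linear automorphism of $\cO_{\mathfrak{X}}(\mathfrak{U})^d$, rather than merely an additive or $\CI$-linear map. This reduces to two routine facts: first, the commutation relation $\tau(c f) = c^q \tau(f)$ in $\cO_{\mathfrak{X}}(\mathfrak{U})$ for $c \in \CI$ and $f \in \cO_{\mathfrak{X}}(\mathfrak{U})$; second, that the Frobenius endomorphism of $\cO_{\mathfrak{X}}(\mathfrak{U})$ fixes the image of $A \otimes 1$, so that it commutes with the first $A$-action. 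Once these are in hand, the sheaf structure, both module actions, and the pushout property all drop out of the construction, and functoriality in $\mathfrak{U}$ is automatic from the naturality of pullback along $\phi$.
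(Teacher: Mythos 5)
Your proposal is correct and follows essentially the same route as the paper: fix a choice of coordinates to identify the sections with $\cO_{\mathfrak{X}}(\mathfrak{U})^d$, use costability of $\mathfrak{U}$ to make $\tau$ (hence $\Mat_d(\CI\{\tau\})$, hence the second $A$-action) act on sections, and compare two choices of coordinates through the matrix $M=\kappa'\circ\kappa^{-1}\in\GL_d(\CI\{\tau\})$, which is invertible on sections precisely because of costability. The only cosmetic difference is that the paper realizes the coordinate-free object explicitly as the inverse limit $\varprojlim_\kappa \mathcal{E}_\kappa$ over the system of transition isomorphisms $t_{\kappa,\kappa'}$, whereas you assemble the $\mathcal{E}_\kappa$ by the corresponding cocycle; the uniqueness statement is treated at the same (informal) level of precision as in the paper itself.
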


We call $\mathcal{E}(\mathfrak{U})$ the \emph{$A\otimes A$-module of $E(\CI)$-valued holomorphic functions on $\mathfrak{U}$}. More generally, if $\cM_\mathfrak{X}$ denotes the sheaf of meromorphic functions on $\underline{\mathfrak{X}}$ and $\cL\subset \cM_\mathfrak{X}$ is a \emph{$\tau$-costable} sub-$\cO_\mathfrak{X}$-modules sheaf (Definition \ref{def:costable}), there exists a unique sheaf $\mathcal{E}(\cL)$ of $A\otimes A$-modules on $\,^\tau\underline{\mathfrak{X}}$ verifying Proposition~\ref{prop:intro-E-valuedfunctions} with $\cL$ in place of $\cO_\mathfrak{X}$ in diagram \eqref{eq:intro-diagram-E-functions}. In particular, we have $\mathcal{E}=\mathcal{E}(\cO_\mathfrak{X})$. \\

If $\mathfrak{U}$ is the affinoid disk $\mathfrak{D}:=\operatorname{Sp}(A\hat{\otimes}\CI)$, then $\mathcal{E}(\mathfrak{D})$ is canonically isomorphic to $A\hat{\otimes} E(\CI)$ (Lemma \ref{lem:E-on-disk}). Therefore, the sheaf $\mathcal{E}$ naturally extends our construction \eqref{eq:completion} to the whole $\,^\tau\underline{\mathfrak{X}}$. We subsequently generalize the definition of the module of special functions (Definition 3.1 in \emph{loc.\,cit.}) as follows (Definition~\ref{def:sf}).

\begin{Definition}
Let $\mathfrak{U}$ be an object of $\operatorname{Cat}^{\tau}\mathfrak{X}$. Given $\cL$ a $\tau$-costable sub-$\cO_\mathfrak{X}$-module sheaf of $\cM_\mathfrak{X}$, we let $\fsf_E(\cL)(\mathfrak{U})$ be the sub-$A$-module of $\mathcal{E}(\cL)(\mathfrak{U})$ consisting of elements $\omega$ satisfying $(a\otimes 1)\omega=(1\otimes a)\omega$ for all $a\in A$. 
\end{Definition}

One verifies that $\fsf_E(\cL)$ is a sheaf of $A$-modules on $\,^\tau\underline{\mathfrak{X}}$ (Proposition \ref{prop:sf-sheaf}) which extends $\fsf(E)$ on the disk $\mathfrak{D}$ (Lemma \ref{lem:E-on-disk}). \\

Let $\fj\in \operatorname{Sp}(A\otimes \CI)$ be the prime ideal given by the kernel of the multiplication map $A\otimes \CI\to \CI$. It has been observed in several situations that special functions not only live in the Tate algebra, but can also be analytically continued to meromorphic functions on the full $\mathfrak{X}$ with only poles at $\fj$ and its iterates by $\tau$ (denoted $\tau\fj$, $\tau^2\fj$, etc.) and of order at most $d=\dim E$. For instance, the inverse of the Anderson-Thakur function $\omega(t)$ has infinite radius of convergence and vanishes only at $t=\theta$, $\theta^q$, ... In much greater generality, this also appears in the work of Hartl--Juschka \cite[\S 2.3.4]{hartl-juschka}.\\ 
In our setting, the statement takes the following form (Proposition \ref{thm:continuation-sf}):
\begin{Theorem}[Meromorphic continuation of special functions]\label{thm:intro-continuation}
Let $J$ be the rigid analytic divisor $\fj+\tau\fj+\tau^2\fj+...$ on $\mathfrak{X}$. Then, the sheaf $\cO_\mathfrak{X}(d\cdot J)$ is $\tau$-costable and the map
\[
\fsf_E(\cO_\mathfrak{X}(d\cdot J))(\mathfrak{X})\to \fsf_E(\cO_\mathfrak{X}(d\cdot J))(\mathfrak{D})=\fsf_E(\cO_\mathfrak{X})(\mathfrak{D})=\fsf(E),
\]
where the first map is given by restriction, is an $A$-linear isomorphism.
\end{Theorem}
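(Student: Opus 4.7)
My plan is to establish the theorem in three stages: $\tau$-costability, injectivity of the restriction map, and surjectivity by a meromorphic-continuation argument in the spirit of Anderson--Thakur's product formula.

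The $\tau$-costability of $\cO_\mathfrak{X}(d\cdot J)$ should reduce to the elementary divisor identity $J = \fj + \tau J$, which is immediate from the definition $J = \sum_{i\geq 0}\tau^i\fj$: shifting the index removes the $\fj$-summand. Once the precise meaning of $\tau$-costability for sub-$\cO_\mathfrak{X}$-module sheaves of $\cM_\mathfrak{X}$ given in Definition \ref{def:costable} is unpacked, this identity is exactly what forces $\cO_\mathfrak{X}(d\cdot J)$ to be $\tau$-costable. Injectivity of the restriction is then a matter of analytic continuation: after a choice of coordinates $\kappa$ for $E$, the module $\fsf_E(\cO_\mathfrak{X}(d\cdot J))(\mathfrak{X})$ embeds in $\cO_\mathfrak{X}(d\cdot J)(\mathfrak{X})^d \subset \cM_\mathfrak{X}(\mathfrak{X})^d$, and a meromorphic section on the connected rigid space $\mathfrak{X}$ vanishing on the non-empty admissible open $\mathfrak{D}$ must vanish globally.

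Surjectivity is the main content and I would tackle it by iterating a functional equation coming from the special-function condition. Fix a separating element $a \in A$ and coordinates $\kappa$ for $E$, so that the $A$-module action writes $\phi_a = \partial a + \sum_{i\geq 1} M_i(a)\tau^i$ for matrices $M_i(a) \in \Mat_d(\CI)$. Given $\omega \in \fsf(E)$, the identity $(a\otimes 1)\omega = (1\otimes \phi_a)\omega$ rewrites as
\[
Q_a\cdot \omega \;=\; \sum_{i\geq 1}(1\otimes M_i(a))\,\tau^i\omega, \qquad Q_a := a\otimes 1 - 1\otimes \partial a.
\]
By Definition \ref{def:A-module}(ii), the matrix $Q_a$ is nilpotent of index $\leq d$ at the prime $\fj$; hence $\det Q_a$ vanishes to order at most $d$ along $\fj$ and $Q_a^{-1}$ exists as a matrix of meromorphic functions on $\mathfrak{X}$ with poles of order $\leq d$ supported on $\fj$. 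Solving the identity for $\omega$ and iterating $n$ times yields an expression for $\omega$ whose leading factor has poles of order $\leq d$ accumulating at $\fj, \tau\fj, \ldots, \tau^{n-1}\fj$, together with a remainder involving $\tau^n\omega$. Since the Frobenius twist $\tau$ contracts the $\CI$-coefficients of the Tate algebra by raising to $q$-th powers, the remainders $\tau^n\omega$ extend holomorphically to arbitrarily large affinoid subdomains of $\mathfrak{X}$ as $n\to\infty$. Passing to the limit over an admissible cover of $\mathfrak{X}$ by affinoids then produces the sought extension $\tilde\omega\in \cO_\mathfrak{X}(d\cdot J)(\mathfrak{X})$, and $\tilde\omega$ remains a special function since the identity $(a\otimes 1)\tilde\omega=(1\otimes a)\tilde\omega$ holds at each finite stage and survives by continuity.

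The principal obstacle is precisely this convergence-and-gluing step: one must control the rate at which $\tau^n$ contracts on sections of $\cO_\mathfrak{X}$ over each affinoid, verify that the locally-defined partial extensions agree on overlaps (this will follow from the injectivity of step two), and confirm that the iteration terminates, in a suitable sense, into a section of $\fsf_E(\cO_\mathfrak{X}(d\cdot J))(\mathfrak{X})$ rather than only of $\mathcal{E}(\cO_\mathfrak{X}(d\cdot J))(\mathfrak{X})$. Once these analytic subtleties are handled, the resulting map $\omega\mapsto \tilde\omega$ will be the inverse of the restriction, concluding the proof.
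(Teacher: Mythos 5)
There is a genuine gap, on two counts, both in your surjectivity step. First, the convergence mechanism you invoke is not available: for an element of the Tate algebra $\CI\langle A\rangle=\cO_\mathfrak{X}(\mathfrak{D})$, i.e.\ of radius exactly $1$, twisting does \emph{not} enlarge the domain of convergence. The norm identity $\|f^{(1)}\|_\rho=\|f\|_{\rho^{1/q}}^q$ shows that $f^{(1)}$ converges on radius $\rho$ only if $f$ already converges on radius $\rho^{1/q}$, which is still $>1$ whenever $\rho>1$; raising coefficients to $q$-th powers does not improve decay against $\rho^n$ unless the radius was already strictly larger than $1$. So your iteration of $\omega=Q_a^{-1}\sum_i(1\otimes M_i(a))\tau^i\omega$ never leaves the unit disk: the bootstrap has no starting point, and ``the remainders $\tau^n\omega$ extend to arbitrarily large affinoids'' is false as stated. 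The paper obtains the continuation from a completely different input: by \cite[Thm.~3.11]{gazda-maurischat} every special function is of the form $\omega_\lambda=\widehat{\exp}_E\big((u\otimes 1-1\otimes\partial\varphi_u)^{-1}(\lambda)\big)$ with $\lambda\in\fd_{A/\bF[u]}\otimes_A\Lambda_E$, and the explicit series of Lemma \ref{lem:explicit-formula-for-sf}, which converges on all of $\mathfrak{X}$ because $\exp_E$ is entire, exhibits the extension with poles of order $\leq d$. Some global convergence input of this kind (or a careful recursion estimate on coefficients, which in dimension $>1$ with nilpotent part is not routine) is indispensable; your proposal supplies none.

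Second, even granting an extension by your method, its polar divisor is wrong. The matrix $Q_a=a\otimes 1-1\otimes\partial a$ has $\det Q_a=(a\otimes 1-1\otimes\ell(a))^d$, so the poles of $Q_a^{-1}$ are supported on the full zero locus of $a\otimes 1-1\otimes\ell(a)$, which is the union of all conjugate graphs $\fj^{\sigma}$, $\sigma\in\Hom_{\bF(a)}(\ell(K),\CI)$, not on $\fj$ alone (your claim is correct only when $K=\bF(a)$, e.g.\ $A=\bF[t]$, $a=t$). Iterating therefore yields at best a section of $\cO_\mathfrak{X}(d\cdot J_a)$ in the notation of Proposition \ref{prop:O(J)-equals-intersection-O(Ju)}, not of $\cO_\mathfrak{X}(d\cdot J)$. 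Cutting the poles down to $J$ requires varying the separating element and the intersection statement $\cO_\mathfrak{X}(n\cdot J)=\bigcap_u\cO_\mathfrak{X}(n\cdot J_u)$, which rests on Lemma \ref{lem:m-dist-j}; this step, which is the other half of the paper's proof of Theorem \ref{thm:continuation-sf}, is entirely missing from your argument. (Your costability and injectivity steps are fine: $J\geq\tau J$ gives costability as in the paper's example, and injectivity is essentially built into the construction of $\fsf_E(d\cdot J)(\mathfrak{X})$ inside $\mathcal{E}(\cO_\mathfrak{X})(\mathfrak{D})$.)
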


To state the main theorem of this note, we require the notion of \emph{residue map at $\fj$} for $E(\CI)$-valued meromorphic functions. Denote by $\mathcal{J}$ the sheaf of meromorphic functions on $\underline{\mathfrak{X}}$ that are holomorphic away from the support of $J$. Let $\kappa:E\stackrel{\sim}{\to}\bG_a^d$ be a choice of coordinates; note that $\Lie_{\kappa}$ induces coordinates on the tangent space $\Lie_E\cong \bG_a^d$. The residue map at $\fj$ is defined as follows: it is the unique map $\res_\fj$ of $A$-modules making the following diagram commute:
\begin{equation}
\begin{tikzcd}
\Omega^1_{A/\bF}\otimes_{A}\mathcal{E}(\mathcal{J})(\mathfrak{X})\arrow[d,"\kappa(\mathfrak{X})"',"\wr"]\arrow[r,dashed,"\res_\fj"] & \Lie_E(\CI) \arrow[d,"\wr"',"\Lie_\kappa"] \\
\Omega_{A/\bF}^1\otimes_A \cJ(\mathfrak{X})^d  \arrow[r,"\operatorname{residue}_{\fj}"] & \CI^d
\end{tikzcd}
\end{equation}
The map denoted $\operatorname{residue}_{\fj}$ above denotes the residue map for rigid meromorphic functions; its definition is recalled below in Section \ref{sec:continuation-residue}.

Our main result is as follows:
\begin{Theorem}\label{thm:residue}
The morphism $\res_\fj:\Omega^1_{A/\bF}\otimes_{A}\mathcal{E}(\mathcal{J})(\mathfrak{X})\longrightarrow \Lie_E(\CI)$ is independent on the choice of coordinates $\kappa$. Further, through $\fsf(E)=\fsf_E(\cO_\mathfrak{X}(d\cdot J))(\mathfrak{X})\subseteq \mathcal{E}(\mathcal{J})(\mathfrak{X})$, the map $\res_\fj$ induces an isomorphism $\Omega^1_{A/\bF} \otimes_A \fsf(E) \stackrel{\sim}{\to} \Lambda_E$. 
\end{Theorem}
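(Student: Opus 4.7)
The argument splits into (i) independence of $\res_\fj$ from the choice of $\kappa$, and (ii) the fact that its restriction to $\Omega^1_{A/\bF} \otimes_A \fsf(E)$ is an isomorphism onto $\Lambda_E$.

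\textbf{Part (i).} Given two coordinates $\kappa,\kappa':E\to\bG_a^d$, the composition $\phi:=\kappa'\circ\kappa^{-1}$ is an $\bF$-linear automorphism of $\bG_a^d$, represented by a matrix $\Phi=\Phi_0+\Phi_1\tau+\cdots\in\Mat_d(\bF\{\tau\})$, and by functoriality $\Lie_\phi=\Phi_0$. Under the identification $\mathcal{E}(\mathcal{J})(\mathfrak{X})\cong\mathcal{J}(\mathfrak{X})^d$, $\phi$ acts via the full matrix $\Phi$ (the $\tau$-costability of $\mathfrak{X}$ makes the Frobenius twists meaningful at the sheaf level). The crucial observation is that if $f\in\mathcal{J}(\mathfrak{X})$ has poles contained in $\operatorname{supp}(J)=\{\fj,\tau\fj,\tau^2\fj,\ldots\}$, then $\tau^i(f)$ has poles contained in $\{\tau^i\fj,\tau^{i+1}\fj,\ldots\}$ and is therefore holomorphic at $\fj$ for $i\geq 1$. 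Hence $\operatorname{residue}_\fj(\alpha\otimes\tau^i f)=0$ for every $\alpha\in\Omega^1_{A/\bF}$ and every $i\geq 1$, so only $\Phi_0=\Lie_\phi$ contributes to $\res_\fj$. The defining diagram therefore commutes under $\kappa\mapsto\kappa'$.

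\textbf{Part (ii).} By Theorem~\ref{thm:intro-continuation}, $\fsf(E)\subseteq\mathcal{E}(\mathcal{J})(\mathfrak{X})$, so $\res_\fj$ is defined on $\Omega^1_{A/\bF}\otimes_A\fsf(E)$. Fix a separating $u\in A$ with $du|_\fj\neq 0$ (an open condition on $u$), and recall two canonical isomorphisms: the map $\delta_u:\mathfrak{d}_{A/\bF[u]}\otimes_A\Lambda_E\xrightarrow{\sim}\fsf(E)$ of~\cite[Thm.~3.11]{gazda-maurischat}, and the $A$-linear iso $\iota_u:\mathfrak{d}_{A/\bF[u]}^{-1}\xrightarrow{\sim}\Omega^1_{A/\bF}$, $f\mapsto f\cdot du$, obtained by identifying $\Omega^1_{A/\bF}$ with its image in $\Omega^1_{K/\bF}=K\cdot du$. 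Composed, they yield a canonical iso
\[
\mu_u:\Omega^1_{A/\bF}\otimes_A\fsf(E)\xleftarrow{\sim}\mathfrak{d}^{-1}_{A/\bF[u]}\otimes_A\mathfrak{d}_{A/\bF[u]}\otimes_A\Lambda_E\xrightarrow{\mathrm{mult}}\Lambda_E,
\]
where the first arrow is $\iota_u\otimes\delta_u$. The goal is to prove $\res_\fj=\mu_u$: this will simultaneously show that $\res_\fj$ takes values in $\Lambda_E$, is a bijection, and (a posteriori) is independent of $u$.

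\textbf{Verification, and main obstacle.} By $A$-linearity it suffices to show that $\res_\fj(du\otimes\delta_u(g\otimes\lambda))=(\partial g)(\lambda)$ for $g\in\mathfrak{d}_{A/\bF[u]}$ and $\lambda\in\Lambda_E$. Setting $\omega:=\delta_u(g\otimes\lambda)$ and $v:=u-\iota(u)\in A\otimes\CI$ (a uniformizer at $\fj$), the residue equals the coefficient $\omega_{-1}$ in the Laurent expansion $\omega=\sum_{k\geq-d}\omega_k v^k$. The special-function equation $(a\otimes 1)\omega=\phi_a(\omega)=\partial a(\omega)+\sum_{i\geq 1}C_{a,i}\tau^i(\omega)$, combined with the holomorphy at $\fj$ of each $\tau^i(\omega)$ for $i\geq 1$, forces $(\iota(a)\id-\partial a)(\omega)$ to be holomorphic at $\fj$ and produces a recursion on the principal-part vector $(\omega_{-d},\ldots,\omega_{-1})$ driven by the nilpotent operator $\iota(a)\id-\partial a$ (of index $\leq d$). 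The main technical difficulty is to show that this recursion \emph{telescopes}---generalizing the Carlitz-case identity $\sum_{n=0}^N a_n^q\theta^n=-a_N\theta^{N+1}\to-\tilde\pi$---so that $\omega_{-1}$ alone (rather than a mixture with the deeper coefficients $\omega_{-2},\ldots,\omega_{-d}$) emerges as a period, and to identify this period with $(\partial g)(\lambda)$ via the explicit construction of $\delta_u$. Controlling the interaction of the nilpotent deformation with the higher-order poles is the principal novel computation.
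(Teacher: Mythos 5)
Your Part (i) is correct and is essentially the paper's own argument (Proposition \ref{prop:res-indep}): since every twisted function $\tau^i f$, $i\geq 1$, of an $f\in\cJ(\mathfrak{X})$ is holomorphic at $\fj$, only the constant term $M_0=\Lie$ of the coordinate change contributes to the residue. (Minor point: the change-of-coordinates matrix lives in $\GL_d(\CI\{\tau\})$, not $\Mat_d(\bF\{\tau\})$.)

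In Part (ii), however, there is a genuine gap: the heart of the proof is exactly the identity you reduce to, namely $\res_\fj\bigl(du\otimes\delta_u(g\otimes\lambda)\bigr)=(\partial g)(\lambda)$, and you do not prove it -- you explicitly leave the ``telescoping'' of the recursion and the control of ``the interaction of the nilpotent deformation with the higher-order poles'' as an open obstacle. This is not a routine verification along the route you chose: with the element $du\otimes g\otimes\lambda$, the term of $\delta_u(g\otimes\lambda)$ with pole of order $j+1$ at $\fj$ has numerator $(g\otimes 1)\cdot n_u^j\lambda$, and since $g\otimes 1$ is a non-constant function its Taylor coefficients at $\fj$ \emph{do} contribute to the residue for every $j$; one would then have to prove a Taylor-type identity expressing $\partial\varphi_g=\ell(g)+n_g$ through the Hasse derivatives of $g$ with respect to $u$ and the powers $n_u^j$. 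Trying instead to run the recursion purely from the functional equation $(a\otimes 1)\omega=(1\otimes a)\omega$ cannot by itself identify the residue with an element of $\Lambda_E$, since $\Lambda_E=\ker\exp_E$ is invisible to that equation; some explicit description of $\fsf(E)$ via $\exp_E$ must enter. The paper closes this gap with Lemma \ref{lem:explicit-formula-for-sf}: since $n_u=\partial\varphi_u-\ell(u)$ is nilpotent of order $\leq d$, one has $(u\otimes 1-1\otimes\partial\varphi_u)^{-1}=\sum_{j=0}^{d-1}(1\otimes n_u^j)(u\otimes 1-1\otimes\ell(u))^{-(j+1)}$, and applying $\widehat{\exp}_E$ gives the full Laurent data of $\omega_\lambda$ in coordinates. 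Crucially, the paper then evaluates the residue not on $du\otimes g\otimes\lambda$ but on $Du\otimes\lambda$ with $Du=\mu^{-1}(du)=\sum_i dt_i\otimes f_{s_i}'(s_i)$: after summing over $i$ the $j$-th polar term becomes a constant vector times $du/(u\otimes 1-1\otimes\ell(u))^{j+1}=dv/v^{j+1}$, whose residue vanishes for $j\geq 1$, while the twisted terms ($n\geq 1$) are holomorphic at $\fj$; only $(j,n)=(0,0)$ survives and yields $\lambda$. The isomorphism statement then follows because the other three maps in the comparison square (your $\mu_u$, $\delta_u$, and the continuation isomorphism of Theorem \ref{thm:continuation-sf}) are isomorphisms. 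Also note a small unaddressed point in your reduction: elements $du\otimes g\otimes\lambda$, $g\in\fd_{A/\bF[u]}$, only generate a full-rank submodule of $\Omega^1_{A/\bF}\otimes_A\fsf(E)$, so you need torsion-freeness of $\Lie_E(\CI)$ as an $A$-module to conclude equality of the two maps everywhere.
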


\begin{Remark}
Surprisingly, we shall never use in this note the assumption that $E$ is uniformizable nor abelian. Theorem \ref{thm:residue} hence provides a general method to reconstruct the period lattice of $E$ without any reference to its exponential function. 
\end{Remark}

An immediate consequence of Theorem \ref{thm:residue} is that there are no non-zero special functions that are holomorphic on the whole $\mathfrak{X}$:
\begin{Corollary}\label{cor:no-pole-no-function}
The module $\fsf_E(\mathcal{O}_\mathfrak{X})(\mathfrak{X})$ is trivial.
\end{Corollary}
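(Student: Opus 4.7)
The plan is to deduce the corollary directly from Theorems~\ref{thm:intro-continuation} and~\ref{thm:residue}. The strategy is as follows: given any $\omega \in \fsf_E(\cO_\mathfrak{X})(\mathfrak{X})$, I would use the natural inclusion $\cO_\mathfrak{X} \hookrightarrow \cO_\mathfrak{X}(d\cdot J)$ to view $\omega$ as a section of $\fsf_E(\cO_\mathfrak{X}(d\cdot J))(\mathfrak{X})$, which by Theorem~\ref{thm:intro-continuation} is canonically identified (via restriction to $\mathfrak{D}$) with $\fsf(E)$.

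The key step is to verify that $\res_\fj$ vanishes on $\eta\otimes \omega$ for every $\eta \in \Omega^1_{A/\bF}$. Fixing a choice of coordinates $\kappa$, the image $\kappa(\mathfrak{X})(\omega)$ lies in $\cO_\mathfrak{X}(\mathfrak{X})^d$, so its components are genuinely holomorphic at the point $\fj$. By the very definition of the rigid-analytic residue, $\operatorname{residue}_\fj$ then annihilates each component, and unravelling the commutative square defining $\res_\fj$ (given just before Theorem~\ref{thm:residue}) yields $\res_\fj(\eta \otimes \omega)=0$.

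However, Theorem~\ref{thm:residue} asserts that $\res_\fj$ restricts to an isomorphism $\Omega^1_{A/\bF}\otimes_A \fsf(E) \stackrel{\sim}{\to} \Lambda_E$. Combined with the previous vanishing, this forces $\Omega^1_{A/\bF}\otimes_A \fsf_E(\cO_\mathfrak{X})(\mathfrak{X})=0$, the inclusion being preserved on tensor products by flatness of $\Omega^1_{A/\bF}$. Since $A$ is the coordinate ring of the smooth affine curve $C\setminus\{\infty\}$ over $\bF$, the $A$-module $\Omega^1_{A/\bF}$ is invertible---in particular faithfully flat---and the conclusion $\fsf_E(\cO_\mathfrak{X})(\mathfrak{X})=0$ follows.

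I anticipate no serious obstacle: essentially everything reduces to the formal fact that the rigid residue at a point vanishes on sections that are holomorphic there, together with bookkeeping of the coordinate-dependent identifications. The substantive content sits in Theorems~\ref{thm:intro-continuation} and~\ref{thm:residue}, both of which we are allowed to invoke.
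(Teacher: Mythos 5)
Your proposal is correct and follows essentially the same route as the paper: the paper also reduces, via invertibility of $\Omega^1_{A/\bF}$, to showing that $\Omega^1_{A/\bF}\otimes_A\fsf_E(\cO_\mathfrak{X})(\mathfrak{X})$ vanishes, observes that the residue $\res_\fj$ is zero on such holomorphic sections, and concludes by the injectivity furnished by Theorem~\ref{thm:res-inverse}. Your write-up merely makes explicit a few steps the paper leaves implicit (the inclusion into $\fsf(E)$ via Theorem~\ref{thm:continuation-sf} and the flatness argument), so no changes are needed.
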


Compiling the results of this note, we notice that the $A$-module $\fsf(E)$ carries a natural increasing finite filtration indexed by the order of poles along the divisor $J$:
\begin{equation}\label{eq:mysterious-filtration}
(0)\stackrel{\text{Cor~}\ref{cor:no-pole-no-function}}{=} \fsf_E(\cO_\mathfrak{X})(\mathfrak{X}) \hookrightarrow \fsf_E(\cO_\mathfrak{X}(J))(\mathfrak{X})\hookrightarrow \cdots \hookrightarrow \fsf_E(\cO_\mathfrak{X}(d\cdot J))(\mathfrak{X})\stackrel{\text{Thm~}\ref{thm:intro-continuation}}{=} \fsf(E).
\end{equation}
The above has by design at most $d$ jumps. We computed some examples of this filtration in Subsection \ref{subsec:filtration}. We leave the following question open for future research:
\begin{Question*}
To what filtration on $\Lambda_E$ does \eqref{eq:mysterious-filtration} correspond?
\end{Question*}

\paragraph{Context and acknowledgments:} The main ideas of this paper were already in the mind of the authors at the time of \cite{gazda-maurischat}. The emergence of the paper owes much to Giacomo Hermes Ferraro and his desire to read a detailed version of the main theorem. In preparing this manuscript, the authors noticed the fundamental need of a \emph{costable site}--an addition to the theory compared to \emph{loc.\,cit.}--in order to state the results. Apart from the costable theory, the fact that periods should appear as the residue at $t=\theta$ of special functions was already in vogue: for instance, this was already noticed by Pellarin \cite[\S 4.2]{pellarin08} for Drinfeld modules in the $\bF[t]$-case; for $A$ being the coordinate ring of an elliptic curve over $\bF$ and $E$ a Drinfeld-Hayes module, an explicit formula was found by Green-Papanikolas \cite[Thm. 4.6]{green-papanikolas}. Parts of this picture were also present at various stages of \cite{hartl-juschka}, though in a different language assuming particular coefficient rings, abelianess and uniformizability.

The first author is grateful to Max Planck Institute for Mathematics in Bonn for its
hospitality and financial support.

\section{Preliminaries}\label{sec:preliminaries}
\subsection*{Notations}
Let $\bF$ be a finite field of characteristic $p$. Let $(C,\cO_C)$ be a geometrically irreducible smooth projective curve over $\bF$ and let $\infty$ be a closed point on it. Let 
\[
A=H^0(C\setminus \{\infty\},\cO_C)
\]
be the $\bF$-algebra of rational functions on $C$ that are regular away from $\infty$. We let $K=\bF(C)$ be the function field of $(C,\cO_C)$ or, equivalently, the fraction field of $A$. We let $K_{\infty}$ be the completion of $K$ with respect to the valuation given by the order of vanishing at the closed point $\infty$,  $\KI^s$ be a fixed separable closure of $\KI$, and $\CI$ be the completion of $\KI^s$.\footnote{It equals the completion of an algebraic closure of $\KI$.} Let $\ell:A\to \CI$ denote the inclusion and call it the \emph{characteristic morphism}. \\

In this text, a \emph{(commutative) group scheme over $\CI$} is a functor $G$ from the category of $\CI$-algebras to that of abelian groups, which is represented by the functor of points of a scheme. If this functor further lands in the category of $B$-modules for some commutative ring $B$, then we say that $G$ is a \emph{$B$-module scheme over $\CI$}. In particular, we will denote by $\bG_a$ the $\bF$-vector space scheme over $\CI$ given by the tautological functor which assigns to a $\CI$-algebra $R$ its underlying $\bF$-vector space
\[
\bG_a:\{\CI-\text{Algebras}\}\longrightarrow \{\bF-\text{vector~spaces}\}, \quad R\longmapsto R.
\]
As a group scheme, it is canonically isomorphic to the additive group scheme over $\CI$, hence the notation. Given a group scheme $G$ over $\CI$, we define its \emph{tangent group scheme} $\Lie_G$ via the assignment 
\[
\Lie_G:\{\CI-\text{Algebras}\}\longrightarrow \{\bF-\text{vector~spaces}\}, \quad R\longmapsto \ker G\left(R[\varepsilon] \xrightarrow{\varepsilon\mapsto 0} R\right),
\]
where $R[\varepsilon]$ denotes the $R$-algebra of \emph{dual numbers} \cite[\S 12]{milne}. Observe that if $G$ is a $B$-module scheme, then so is $\Lie_G$ by making $b\in B$ act as $\varepsilon\mapsto b\varepsilon$. We denote this action by $\partial b$. \\

Let $q=|\bF|$. By $\CI\{\tau\}$, we mean the non-commutative ring of polynomials over $\CI$ in a variable~$\tau$ subject to the commutation rule $\tau\cdot c=c^q\tau$ for all $c\in \CI$. There is a canonical isomorphism of rings (\emph{cf} \cite[\S 15.d]{milne}):
\[
\CI\{\tau\}\stackrel{\sim}{\longrightarrow} \End_{\bF\text{-vs}/\CI}(\bG_a),\quad \sum_{i}{c_i\tau^i} \longmapsto \sum_{i}{c_i\cdot \Frob_{\bG_a}^{q^i}}.
\]

We may now restate Definition \ref{def:A-module} of the introduction. By an \emph{Anderson $A$-module of dimension $d$ over $\CI$} we mean an $A$-module scheme $E$ over $\CI$ which satisfies:
\begin{enumerate}[label=$(\roman*)$]
    \item As an $\bF$-vector space scheme over $\CI$, $E$ is isomorphic to $\bG_a^d$;
    \item Let $\varphi:A\to \End_{\bF\text{-vs}/\CI}(E)$ be the ring homomorphism induced by the $A$-module structure on $E$ seen as an $\bF$-vector space scheme, and let $\partial\varphi:A\to \End_{\CI}(\Lie_E(\CI))$ be the induced action on the tangent group scheme. Then, for all $a\in A$, $\partial\varphi(a)-\ell(a)$ is nilpotent on $\Lie_E(\CI)$.
\end{enumerate}
We will refer to an isomorphism $\kappa:E\stackrel{\sim}{\to}\bG_a^d$ of $\bF$-vector spaces schemes over $\CI$ as \emph{a choice of coordinates for $E$}. Note that any such choice of coordinates induces an isomorphism ${\Lie_\kappa:\Lie_E \stackrel{\sim}{\to} \bG_a^d}$ of $\bF$-vector space schemes since $\Lie_{\bG_a^d}\cong \bG_a^d$ canonically. 

\subsection*{The characteristic graph ideal $\fj$}
We again denote by $\tau:A\otimes \CI\to A\otimes \CI$ the ring endomorphism which acts $A$-linearly but as the $q$-Frobenius on $\CI$. We let $\fj$ be the \emph{characteristic graph ideal}, \emph{i.e.} the kernel of the multiplication map:
\[
\fj:=\ker(A\otimes \CI\xrightarrow{\ell\otimes \id} \CI).
\]
It is a maximal ideal of $A\otimes \CI$ whose support $V(\fj)$ is the graph of $\ell$. We denote by $\tau^i\fj$ the $i$th iterate of $\fj$ by $\tau$; namely $\tau^i\fj$ is the kernel of the map $A\otimes \CI\to \CI$, $a\otimes b\mapsto \ell(a)b^{q^i}$. In this section we recall some classical facts about $\fj$. \\

We call \emph{separating} any element $u$ in $A$ such that the extension $K/\bF(u)$ is finite and separable.
\begin{Lemma}\label{lemma-p-power}
Any non separating element in $A$ is the $p$-th power of an element in $A$. In particular, for any $a\in A\setminus \bF$, there exists a separating element $u\in A$ and a non-negative integer $k$ such that $a=u^{p^k}$.  
\end{Lemma}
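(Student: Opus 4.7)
My plan is to reduce the first assertion to the classical criterion that a non-constant $u\in K$ is separating if and only if $u\notin K^p$, and then to upgrade ``$u\in A\cap K^p$'' to ``$u\in A^p$'' using the divisorial description of $A$.

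For the first step, I would recall the sequence of Kähler differentials
\[
\Omega^1_{\bF(u)/\bF}\otimes_{\bF(u)}K \longrightarrow \Omega^1_{K/\bF}\longrightarrow \Omega^1_{K/\bF(u)}\longrightarrow 0,
\]
and use that, since $\bF$ is perfect and $K/\bF$ has transcendence degree one, $\Omega^1_{K/\bF}$ is one-dimensional over $K$ and the kernel of the universal derivation $d\colon K\to \Omega^1_{K/\bF}$ is exactly $K^p$. A non-constant $u\in K$ is therefore separating (i.e.\ $\Omega^1_{K/\bF(u)}=0$, which for the finite extension $K/\bF(u)$ is equivalent to separability) precisely when $du\neq 0$, i.e.\ when $u\notin K^p$. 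Constants $u\in\bF$ are automatically $p$-th powers in $\bF\subset A$ since $\bF$ is perfect, so I can restrict attention to the non-constant non-separating case, in which $u=v^p$ for some $v\in K$.

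For the second step, I would invoke the fact that $A$ is the set of elements of $K$ regular away from $\infty$: for any closed point $P\neq \infty$, $\val_P(u)=p\val_P(v)\geq 0$ forces $\val_P(v)\geq 0$, so $v\in A$. This yields the first claim.

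Finally, for the ``in particular'' part I would iterate: starting from $a\in A\setminus\bF$, if $a$ is separating we are done with $k=0$; otherwise $a=v^p$ with $v\in A$, and necessarily $v\notin\bF$ (else $a=v^p\in\bF$). The main point is termination, and this is where care is needed rather than serious obstruction: the pole order $n:=-\vI(a)$ is a positive integer, and any $p^k$-th root $u\in A$ of $a$ satisfies $-\vI(u)=n/p^k$, which must be a positive integer, so $k\leq \val_p(n)$. The iteration therefore stops at a separating element $u\in A$ with $a=u^{p^k}$ for some $0\leq k\leq \val_p(n)$.
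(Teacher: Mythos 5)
Your proof is correct, but it reaches the key point $a\in K^p$ by a different mechanism than the paper. The paper argues field-theoretically: for a non-constant non-separating $a$ it takes the separable closure $L$ of $\bF(a)$ in $K$, so that $K/L$ is purely inseparable of degree $p^k$ with $k\geq 1$, and then compares the inclusion $K^{p^k}\subseteq L$ with $[K:K^{p^k}]=p^k$ (which rests on $[K:K^p]=p$ for a one-variable function field over a perfect field) to conclude $L=K^{p^k}$, hence $\bF(a)\subseteq K^p$. You instead use the differential criterion: $\dim_K\Omega^1_{K/\bF}=1$, $\ker d=K^p$ (as $\bF$ is perfect), and the cotangent sequence give that a non-constant $u$ is separating if and only if $u\notin K^p$. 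Both are standard proofs of the same classical fact, and both finish with the identical divisor argument to upgrade $v\in K$, $v^p=a$, to $v\in A$. Your route has the small advantage of reusing exactly the computation the paper performs anyway in the next lemma (where $du\neq 0$ for $u$ separating is derived from the same exact sequence), and you make explicit the termination of the iteration in the ``in particular'' part via the pole order at $\infty$ being divisible by $p^k$ -- a point the paper leaves implicit (its own argument could in fact yield $a=u^{p^k}$ with $u$ separating in a single step, since $L=K^{p^k}$ and the $p^k$-power Frobenius identifies $K/\bF(u)$ with $L/\bF(a)$, but this is not spelled out either). No gaps.
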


\begin{proof}
Let $a\in A$ be non separating. If $a$ is constant, it clearly is a $p$-th power, hence we assume that the extension $K/\bF(a)$ is finite.  There is an intermediate extension $L$ such that $K/L$ is purely inseparable of degree $p^k$ for some $k\geq 1$, and $L/\bF(a)$ is separable. As $K^{p^k}\subseteq L$, it is in fact an equality by comparing degrees. Hence, $\bF(a)\subset K^p$ and there exists $b\in K$ such that $b^p=a$. As the divisor of $a$ is the the $p$-th power of the divisor of $b$, we obtain that $b\in A$.
\end{proof}

\begin{Lemma}
The ideal $\fj$ is generated by the differences $u\otimes 1-1\otimes \ell(u)$ where $u$ runs over separating elements in $A$.
\end{Lemma}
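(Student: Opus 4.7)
The plan is to first show that the ideal $I \subseteq A \otimes \CI$ generated by all elements of the form $a \otimes 1 - 1 \otimes \ell(a)$, with $a$ ranging over all of $A$, coincides with $\fj$, and then to refine this generating set to separating elements using Lemma~\ref{lemma-p-power}.

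For the first step, the inclusion $I \subseteq \fj$ is immediate, since each generator maps to $\ell(a) - \ell(a) = 0$ under the multiplication map. For the reverse inclusion, given an arbitrary element $x = \sum_i a_i \otimes c_i \in A \otimes \CI$, I would write
$$x = \sum_i \bigl(a_i \otimes 1 - 1 \otimes \ell(a_i)\bigr)(1 \otimes c_i) + 1 \otimes \Bigl(\sum_i \ell(a_i)\, c_i\Bigr),$$
and observe that if $x$ lies in $\fj$, then the rightmost summand vanishes, exhibiting $x$ as an element of $I$.

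For the second step, let $J$ denote the ideal generated by $u \otimes 1 - 1 \otimes \ell(u)$ with $u$ separating. For a constant $a \in \bF$ one has $a \otimes 1 - 1 \otimes \ell(a) = 0$, so such generators trivially lie in $J$. For a non-constant $a \in A$, Lemma~\ref{lemma-p-power} supplies a separating element $u \in A$ and an integer $k \geq 0$ with $a = u^{p^k}$; since $A \otimes \CI$ is of characteristic $p$, the Frobenius identity yields
$$a \otimes 1 - 1 \otimes \ell(a) = (u \otimes 1)^{p^k} - \bigl(1 \otimes \ell(u)\bigr)^{p^k} = \bigl(u \otimes 1 - 1 \otimes \ell(u)\bigr)^{p^k} \in J.$$
Combining the two steps gives $\fj = I \subseteq J$, while $J \subseteq I = \fj$ is obvious, so equality holds.

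I do not expect any serious obstacle: both steps are routine algebraic manipulations, and the only ingredient specific to the function-field setting is the reduction to separating elements, which is handled cleanly by the $p$-power rewriting afforded by Lemma~\ref{lemma-p-power} and the Frobenius identity in characteristic $p$.
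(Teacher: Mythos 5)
Your proof is correct and follows essentially the same route as the paper: first showing $\fj$ is generated by all differences $a\otimes 1-1\otimes\ell(a)$ via the same tensor decomposition, then reducing to separating elements through Lemma~\ref{lemma-p-power} and the Frobenius identity $(u\otimes 1-1\otimes\ell(u))^{p^k}=a\otimes 1-1\otimes\ell(a)$.
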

\begin{proof}
We first claim that $\fj$ is generated by the set $\{a\otimes 1-1\otimes \ell(a)|a\in A\}$. It is clear that $a\otimes 1-1\otimes \ell(a)\in \fj$ for all $a\in A$. Conversely, let $\mathfrak{g}$ be the ideal of $A\otimes \CI$ generated by  $\{a\otimes 1-1\otimes \ell(a)|a\in A\}$. Every element $x\in A\otimes \CI$ can be written as a sum of elementary tensors $a_i\otimes c_i$ for some $a_i\in A$, $c_i\in \CI$. We have
\[
x=\sum_i{a_i\otimes c_i}\equiv \sum_i{1\otimes \ell(a_i)c_i} \pmod{\mathfrak{g}}.
\]
If $x\in \fj$, the right-hand sum is zero, hence $x\in \mathfrak{g}$.

Now, by Lemma \ref{lemma-p-power}, for any $a\in A\setminus \bF$, there exists a separating element $u$ and a positive integer $k$ such that $a\otimes 1-1\otimes \ell(a)=(u\otimes 1-1\otimes \ell(u))^{p^k}$. Thus $\fj\subset \langle\{u\otimes 1-1\otimes \ell(u)~|~ u~\text{separating}\} \rangle$. 
\end{proof}

\begin{Lemma}
If $u$ is a separating element, then $u\otimes 1-1\otimes \ell(u)$ is a uniformizing parameter for~$\fj$.
\end{Lemma}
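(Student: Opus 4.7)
My plan is to set $R := A\otimes_\bF \CI$ and $\pi := u\otimes 1 - 1\otimes \ell(u) \in \fj$, and to show $\pi \notin \fj^2$; this, combined with the general theory of DVRs, will prove that $\pi$ generates the maximal ideal of the localization $R_\fj$. First, I would observe that $R$ is a Dedekind domain. Indeed, $\Spec A$ is a smooth affine curve over $\bF$, so $\Spec R$ is a smooth $\CI$-scheme of relative dimension~$1$ by base change. It is irreducible thanks to the geometric irreducibility of $C$, and reduced since $\bF$ is perfect. Hence $R$ is a one-dimensional regular integral domain. In particular, $R_\fj$ is a discrete valuation ring with residue field $\CI$, and the cotangent space $\fj/\fj^2$ is a one-dimensional $\CI$-vector space.

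Next, I would reduce the problem to a computation in Kähler differentials. Because $R$ is smooth over $\CI$ at the $\CI$-rational point $\fj$, the universal derivation induces a canonical $\CI$-linear isomorphism
\[
\fj/\fj^2 \stackrel{\sim}{\longrightarrow} \Omega^1_{R/\CI}\otimes_R \CI, \quad x+\fj^2 \longmapsto dx\otimes 1.
\]
Since $\Omega^1_{R/\CI} \cong \Omega^1_{A/\bF}\otimes_\bF \CI$, tensoring over $R$ along the quotient $\ell\otimes \id : R\twoheadrightarrow \CI$ yields a further isomorphism
\[
\Omega^1_{R/\CI}\otimes_R \CI \,\cong\, \Omega^1_{A/\bF}\otimes_A \CI,
\]
where the map $A\to \CI$ on the right is $\ell$. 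Under the composite identification, the class of $\pi$ maps to $du\otimes 1$, since $1\otimes \ell(u)$ is a $\CI$-scalar whose differential in $\Omega^1_{R/\CI}$ vanishes.

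It therefore suffices to show $du\neq 0$ in $\Omega^1_{A/\bF}\otimes_A \CI$. But $u$ being separating means that $K/\bF(u)$ is finite separable, so $du$ forms a $K$-basis of the one-dimensional $K$-vector space $\Omega^1_{K/\bF} = \Omega^1_{A/\bF}\otimes_A K$. Tensoring further along the field inclusion $K\hookrightarrow \CI$ preserves this non-vanishing, and we conclude that $\pi \notin \fj^2$.

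The genuinely delicate step is the cotangent/differential identification: one must justify both the isomorphism $\fj/\fj^2 \cong \Omega^1_{R/\CI}\otimes_R \CI$ (valid at smooth rational points) and the compatibility of $\Omega^1_{(A\otimes \CI)/\CI}$ with $\Omega^1_{A/\bF}$ under base change followed by reduction modulo $\fj$. Once these formal matters are in place, the non-vanishing of $du$ coming from separability closes the argument in one line.
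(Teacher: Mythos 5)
Your proof is correct and follows essentially the same route as the paper: both identify $\fj/\fj^2$ with $\Omega^1_{A/\bF}\otimes_A \CI$ so that the class of $u\otimes 1-1\otimes \ell(u)$ becomes $du\otimes 1$, and then reduce the lemma to the nonvanishing of $du$, which follows from the separability of $K/\bF(u)$. The only cosmetic difference is that you justify the identification via the cotangent space at a smooth rational point and quote the standard fact that a separating element has nonzero differential, whereas the paper cites the defining construction of K\"ahler differentials and derives $du\neq 0$ from the relative cotangent sequence for $\bF\to\bF(u)\to K$.
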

\begin{proof}
Recall by definition of the module of K\"ahler differentials that we have an isomorphism $\Omega^1_{A/\bF}\otimes \CI\stackrel{\sim}{\longrightarrow} \fj/\fj^2$, $da\mapsto a\otimes 1-1\otimes \ell(a)$. Thus, to prove the lemma, it suffices to show that $du\neq 0$ for $u$ separating. Yet, applying the relative cotangent sequence to $\bF\to \bF(u)\to K$ (\emph{e.g.} \cite[II.10.(26.H)]{matsumara}), we obtain an exact sequence
\[
Kdu \longrightarrow \Omega^1_{K/\bF} \longrightarrow \Omega^1_{K/\bF(u)}
\]
whose right-hand side term is zero as $K/\bF(u)$ is finite separable. Hence, the first map is surjective and the image of $du$ is nonzero.
\end{proof}

\begin{Lemma}\label{lem:m-dist-j}
Let $\fm$ be a maximal ideal of $A\otimes \CI$ different from $\fj$, $\tau\fj$, $\tau^2\fj$, ... There exists a separating element $u$ such that, for all non-negative integer $i$, $u\otimes 1-1\otimes \ell(u)^{q^i}$ does not belong to $\fm$.
\end{Lemma}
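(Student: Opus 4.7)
The plan is to argue by contradiction via the $\CI$-algebra homomorphism $\phi\colon A\otimes \CI\to \CI$ with kernel $\fm$ and its restriction $P:=\phi|_A\colon A\to \CI$; in these terms the hypothesis translates to $P\neq \Frob^i\circ\ell$ on $A$ for every $i\geq 0$, and the conclusion becomes: there exists a separating $u\in A$ with $P(u)\neq \ell(u)^{q^i}$ for every $i\geq 0$.

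First I will dispose of the case where $P$ is not injective. Then $\ker P$ is a maximal ideal of the one-dimensional Dedekind ring $A$, so $P(A)$ is a finite field sitting inside $\overline{\bF}\subset \CI$. Since $C$ is geometrically irreducible, $\bF$ is algebraically closed in $K$, so any separating $u\in A\setminus \bF$ has $\ell(u)=u$ transcendental over $\bF$; hence $\ell(u)^{q^i}$ is transcendental for every $i$ and cannot coincide with $P(u)\in \overline{\bF}$. Any such $u$ therefore works.

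Assume now that $P$ is injective. For a separating $u$, transcendence of $\ell(u)$ makes $\{\ell(u)^{q^i}\}_{i\geq 0}$ a sequence of pairwise distinct elements of $\CI$, so there is at most one index $i(u)\geq 0$ with $P(u)=\ell(u)^{q^{i(u)}}$. I will assume for contradiction that such an $i(u)$ exists for every separating $u$ and deduce that $i$ is constant. The crux is a pairing argument: if $u_1,u_2\in A$ are separating, multiplicatively independent in $K^\times$, and their product $u_1u_2$ is again separating, then either $u_1u_2$ has no index (in which case $u_1u_2$ is itself the desired element), or expanding $P(u_1u_2)=P(u_1)P(u_2)$ in two ways yields the relation
\[
\ell(u_1)^{q^{i(u_1)}-q^{i(u_1u_2)}}\cdot \ell(u_2)^{q^{i(u_2)}-q^{i(u_1u_2)}}=1
\]
in $K^\times$, which by multiplicative independence forces both exponents to vanish, giving $i(u_1)=i(u_2)=i(u_1u_2)$. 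To chain any two separating elements $u,v\in A$ to a common value of $i$, I will produce via Riemann--Roch a closed point $v_0\in C\setminus\{\infty\}$ outside the supports of the divisors of $u$ and $v$, together with an element $w\in A$ having a simple zero at $v_0$ and no zero at the other distinguished points; the simple zero makes $w$, $uw$ and $vw$ all separating (their $v_0$-valuations equal $1$, not divisible by $p$), and comparing $v_0$-valuations yields multiplicative independence of the pairs $(u,w)$ and $(v,w)$. Applying the pairing argument to each pair gives $i(u)=i(w)=i(v)$, so $i$ is constant, equal to some $I\geq 0$.

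To close, Lemma~\ref{lemma-p-power} writes any $a\in A$ as $u^{p^k}$ for a separating $u$ and $k\geq 0$, whence $P(a)=P(u)^{p^k}=\ell(u)^{q^Ip^k}=\ell(a)^{q^I}$; hence $P=\Frob^I\circ\ell$ on all of $A$ and $\fm=\tau^I\fj$ with $I\geq 0$, contradicting the hypothesis. The main obstacle is the Riemann--Roch production of the chaining element $w$: one needs $w\in H^0(C,\cO_C(n\cdot\infty-v_0))$ lying outside the finitely many proper subspaces encoding the additional vanishing conditions, which succeeds for $n\gg 0$ by a standard dimension count.
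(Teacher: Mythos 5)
Your proof is correct, but it takes a genuinely different route from the paper's. The paper argues inside the Dedekind domain $A\otimes\CI$: for a separating $t$ it factors the principal ideal $(t\otimes 1-1\otimes\ell(t))$ as the product of the conjugate maximal ideals $\fj^{\sigma}$ indexed by the $\bF(t)$-embeddings $\sigma:\ell(K)\to\CI$, deduces from the contradiction hypothesis that $\fm=\tau^{i}(\fj^{\sigma})$ for some $i$ and some $\sigma$ different from the inclusion, and then reaches a contradiction by a minimal-polynomial argument applied to a separating $u$ with $\sigma(\ell(u))\neq\ell(u)$. You instead identify $\fm$ with its $\CI$-point, i.e.\ the $\bF$-algebra map $P:A\to\CI$ (Nullstellensatz, $\CI$ algebraically closed), note that a separating $u$ admits at most one exponent $i(u)$ with $P(u)=\ell(u)^{q^{i(u)}}$ (transcendence of $\ell(u)$), and show by your multiplicative pairing and Riemann--Roch chaining that if every separating element had such an exponent it would be constant, forcing $P=\Frob^{I}\circ\ell$, hence $\fm=\tau^{I}\fj$ via Lemma~\ref{lemma-p-power}, contradicting the hypothesis. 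Your route avoids the prime factorization and the embeddings $\sigma$ altogether, at the cost of the auxiliary element $w$; the paper's argument is shorter and moreover locates precisely which maximal ideals can contain some $u\otimes 1-1\otimes\ell(u)^{q^i}$ for every separating $u$, namely the twists $\tau^{i}\fj^{\sigma}$. One small caveat in your construction of $w$: over the finite field $\bF$, ``avoiding finitely many proper subspaces by a dimension count'' is not automatic (a vector space over $\bF_q$ is a union of $q+1$ proper subspaces once its dimension is at least $2$), so the extra condition that $w$ not vanish at the other distinguished points is not justified as stated; but this is harmless, since the only property you actually use is $\operatorname{ord}_{v_0}(w)=1$ with $v_0$ chosen outside the supports of $\operatorname{div}(u)$ and $\operatorname{div}(v)$, and that requires avoiding the single proper subspace $H^0(C,\cO_C(n\cdot\infty-2v_0))\subsetneq H^0(C,\cO_C(n\cdot\infty-v_0))$, which Riemann--Roch provides for $n\gg 0$.
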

\begin{proof}
Let $t$ be a separating element. We first compute the prime ideal decomposition of $(t\otimes 1-1\otimes \ell(t))$ in the Dedekind domain $A\otimes \CI$. The inclusion of Dedekind rings $\bF[t]\otimes \CI\subset A\otimes \CI$ makes $A\otimes \CI$ a free $\bF[t]\otimes \CI$-module of rank $[K:\bF(t)]$. In particular, there are at most $[K:\bF(t)]$ prime divisors of $(t\otimes 1-1\otimes \ell(t))$. For $\sigma:\ell(K)\to \CI$ an $\bF(t)$-algebra morphism, the ideal $\fj^{\sigma}$ of $A\otimes \CI$ generated by the set $\{a\otimes 1-1\otimes \sigma(\ell(a))|a\in A\}$ is maximal and divides the principal ideal $(t\otimes 1-1\otimes \ell(t))$. There are $\#\Hom_{\bF(t)}(\ell(K),\CI)=[K:\bF(t)]$ such ideals, hence 
\begin{equation}
(t\otimes 1-1\otimes \ell(t))=\prod_{\sigma}\fj^{\sigma} \nonumber
\end{equation}
where the product runs over $\sigma\in \Hom_{\bF(t)}(\ell(K),\CI)$. 

We turn to the proof of the lemma. Assume the converse, that is, for all separating elements $v$ there exists $j\geq 0$ such that $v\otimes 1-1\otimes \ell(v)^{q^j}\in \fm$. This means that there exists a non-negative integer $i$ for which $\fm \supset (t\otimes 1-1\otimes \ell(t)^{q^i})=\tau^i\left(\prod_{\sigma}{\fj^{\sigma}}\right)$. By uniqueness of the prime ideal decomposition, there exists $\sigma\in \Hom_{\ell(\bF(t))}(\ell(K),\CI)$ such that $\fm=\tau^i(\fj^{\sigma})$. Because $\fm$ is distinct from $\fj$, $\tau\fj$, $\tau^2\fj$, ..., the morphism $\sigma$ is not the inclusion $\ell(K)\subset \CI$. Because $K$ is generated by separating elements over $\bF$, there exists a separating element $u$ such that $\sigma(\ell(u))\neq \ell(u)$. From our converse assumption, there exists a non-negative integer $j$ such that $u\otimes 1-1\otimes \ell(u)^{q^j}\in \fm=\tau^i(\fj^{\sigma})$. Hence, both $u\otimes 1-1\otimes \ell(u)^{q^j}$ and $u\otimes 1-1\otimes \sigma(\ell(u))^{q^i}$ are in $\fm$. Since $\fm\neq A\otimes \CI$, this implies $\sigma(\ell(u))^{q^i}=\ell(u)^{q^j}$. 

This is a contradiction. Indeed, $\ell(u)^{q^i}$ and $\sigma(\ell(u))^{q^i}=\ell(u)^{q^j}$ have the same minimal polynomial over $\ell(\bF(t))$ so that either the latter polynomial has coefficients in $\bF$ or $i=j$. The first option is impossible as it would imply $\ell(u)\in \bar{\bF}\cap \ell(A)=\bF$. The second option is also impossible as we chose $u$ such that $\sigma(\ell(u))\neq \ell(u)$.   
\end{proof}   

\section{Objects from rigid analytic geometry}\label{sec:rigid-analytic-geometry}
Let $X:=\Spec A\otimes \CI$. In this section, we develop in more details the definition of the rigid analytic variety $\mathfrak{X}:=X^{\text{rig}}$, obtained by applying the rigid analytic GAGA on $X$ (see \cite[\S 5.4]{bosch}). 

\subsection*{The rigid analytic space $\mathfrak{X}$}
To begin with, we define Gauss norms on $A\otimes \CI$. Let $c\in \CI^\times$ and let $\rho:=|c|>0$. For $f\in A\otimes \CI$, we set:
\[
\|f\|_\rho:=\inf \left(\max_i\{|c_i|\rho^{\deg a_i}\}\right)
\]
where the infimum is taken over all representations of $f$ as finite sums $\sum_{i}{a_i\otimes c_i}$ in $A\otimes \CI$.

\begin{Remark}
For $\rho=1$, $\|\cdot\|_1$ was denoted by $\|\cdot \|$ in \cite{gazda-maurischat} and the completed ring denoted by $\bT$. We will rather use the notation $\CI\langle A \rangle$ instead of $\bT$ below. 
\end{Remark}

We prove next that $\|\cdot \|_\rho$ is indeed a Gauss norm. For this, we construct a presentation of $A$ as follows. First observe that $A$ has a canonical filtration by finite dimensional sub-$\bF$-vectors spaces $D_n$ of elements of degree $\leq n$. Clearly $D_0=\bF$ and the union of all $D_n$ is $A$. Let $n_1$ be the least positive integer such that the inclusion $D_0\subset D_{n_1}$ is strict, and let $(a_1,...,a_{s_1})$ be a basis of $D_{n_1}$. We let $A_{n_1}$ be the sub-$\bF$-algebra of $A$ generated by $(a_1,...,a_{s_1})$. Proceed by induction to define $n_i$ as the least integer $>n_{i-1}$ such that $D_{n_i}$ is not contained in $A_{n_{i-1}}$, let $(a_{s_1+1},...,a_{s_{i}})$ be a basis of a supplementary subspace of $(A_{n_{i-1}}\cap D_{n_i})$ in $D_{n_i}$, and defined $A_{n_i}$ as the sub-$\bF$-algebra of $A$ generated by $(a_1,...,a_{s_i})$.

This procedure eventually terminates by the Riemann-Roch Theorem, and we obtain a finite generating family $(a_1,...,a_s)$ of $A$. We obtain a morphism
\begin{equation}\label{eq:good-presentation}
\bF[t_1,...,t_s]\longrightarrow A, \quad t_i\longmapsto a_i 
\end{equation}
and a new degree $d$ given as $d(t_i):=\deg(a_i)$ on monomials of $\bF[t_1,...,t_s]$. By design, \eqref{eq:good-presentation} bears the property that $\deg P(a_1,...,a_s)=d(P(t_1,...,t_s))$ for any $P(t_1,...,t_s)\in \bF[t_1,...,t_s]$.
We denote by $\fa$ the kernel of \eqref{eq:good-presentation}, and we let $(f_1(t_1,...,t_s),...,f_t(t_1,...,t_s))$ be a family of generators of $\fa$. We denote by $\fa_c\subset \CI[t_1,...,t_s]$ the \emph{rescaled} ideal
\[
\left(f_1\left(c^{\deg t_1}t_1,\cdots,c^{\deg t_s}t_s\right),...,f_g\left(c^{\deg t_1}t_1,\cdots ,c^{\deg t_1}t_s\right)\right) \subset \CI[t_1,...,t_s].
\]
 
\begin{Proposition}\label{prop:affinoid}
Through the above presentation of $A$, the ring homomorphism
\[
\iota:\CI[t_1,...,t_s]/\fa_c \to A\otimes \CI, \quad t_i\mapsto a_i\otimes c^{-\deg a_i}
\]
is an isomorphism which makes the Gauss norm on the left-hand side coincide with $\|\cdot \|_\rho$ on the right. 
\end{Proposition}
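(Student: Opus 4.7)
The plan is to first verify that $\iota$ is a well-defined ring isomorphism by factoring it through a rescaling automorphism, and then to prove the norm equality by establishing the two inequalities separately.

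For well-definedness, each generator $f_j(c^{\deg t_1}t_1,\ldots,c^{\deg t_s}t_s)$ of $\mathfrak{a}_c$ satisfies $\iota(c^{\deg t_i}t_i) = c^{\deg a_i}\cdot(a_i\otimes c^{-\deg a_i}) = a_i\otimes 1$, so its image is $f_j(a_1,\ldots,a_s)\otimes 1 = 0$. Bijectivity then follows by factoring $\iota$ as the substitution automorphism $\sigma:\CI[t_1,\ldots,t_s]\to \CI[t_1,\ldots,t_s]$, $t_i\mapsto c^{-\deg t_i}t_i$ (invertible since $c\in\CI^\times$, and carrying $\mathfrak{a}_c$ onto $\mathfrak{a}\otimes_\bF\CI$) composed with the standard base-change isomorphism $\CI[t_1,\ldots,t_s]/(\mathfrak{a}\otimes_\bF\CI)\xrightarrow{\sim} A\otimes\CI$, $t_i\mapsto a_i\otimes 1$.

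For the norm comparison, I would write $\|\cdot\|$ for the quotient seminorm on $\CI[t_1,\ldots,t_s]/\mathfrak{a}_c$ inherited from the standard Gauss norm $\|\sum_I c_I t^I\|_G := \max_I |c_I|$ on $\CI[t_1,\ldots,t_s]$. The inequality $\|\iota(x)\|_\rho\leq \|x\|$ is direct: given a lift $P=\sum_I c_I t^I$ of $x$, the expansion $\iota(P) = \sum_I (a_1^{I_1}\cdots a_s^{I_s})\otimes(c_I c^{-d(t^I)})$ provides a presentation in $A\otimes\CI$ whose associated quantity $\max_I |c_I|\cdot|c|^{-d(t^I)}\cdot\rho^{d(t^I)}$ collapses to $\max_I |c_I| = \|P\|_G$, using $\rho = |c|$ and the identity $\deg(a_1^{I_1}\cdots a_s^{I_s}) = d(t^I)$ built into the presentation.

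The main obstacle is the reverse inequality $\|x\|\leq \|\iota(x)\|_\rho$: starting from an arbitrary representation $\iota(x) = \sum_i a_i'\otimes c_i$, one must exhibit a lift of $x$ in $\CI[t_1,\ldots,t_s]$ of Gauss norm at most $\max_i |c_i|\rho^{\deg a_i'}$. The key input is that the presentation \eqref{eq:good-presentation} was built so that every $a'\in A$ of degree $\leq n$ admits a preimage $Q\in \bF[t_1,\ldots,t_s]$ with $d(Q)\leq n$; this is proved by an induction on the filtration $(D_n)$ used in the construction of $a_1,\ldots,a_s$. Applying this to each $a_i'$, one obtains $Q_i = \sum_I \alpha_{i,I} t^I$ with $\alpha_{i,I}\in\bF$ (so $|\alpha_{i,I}|\leq 1$) and with $\alpha_{i,I}\neq 0$ forcing $d(t^I)\leq \deg a_i'$. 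The element $P := \sum_{i,I} c_i\alpha_{i,I} c^{d(t^I)} t^I$ then satisfies $\iota(P) = \sum_i a_i'\otimes c_i = \iota(x)$ and has Gauss norm bounded termwise by $\max_i |c_i|\rho^{\deg a_i'}$ thanks to the ultrametric inequality. Taking the infimum over presentations of $\iota(x)$ completes the proof.
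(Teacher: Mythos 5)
Your argument is essentially the paper's own proof: the same two one-sided estimates, with the hard direction handled by producing a lift from the defining property of the presentation \eqref{eq:good-presentation} (every $b\in A$ of degree at most $n$ has a polynomial representative of weighted degree at most $n$), which the paper invokes as ``by design'' at exactly the point where you invoke your induction on the filtration $(D_n)$. The only differences are cosmetic --- you spell out bijectivity via the rescaling automorphism and state the design property in its accurate existential form --- and, just like the paper's corresponding identity $\|P_i(c^{\deg a_1}t_1,\ldots,c^{\deg a_s}t_s)\|=|c|^{\deg P_i(a_1,\ldots,a_s)}$, your final termwise bound $|c|^{d(t^I)}\leq\rho^{\deg a_i'}$ implicitly uses $\rho=|c|\geq 1$.
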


\begin{Remark}
Recall that the Gauss norm on the quotient $\CI[t_1,...,t_s]/\fa_c$ is defined as
\[
f\in \CI[t_1,...,t_s], \quad \|f\!\pmod{\fa_c}\|:=\inf_{a\in \fa_c} \|f-a\|
\]
where $\|\cdot \|$ is the classical Gauss norm on $\CI[t_1,...,t_s]$; \emph{i.e.} given by the maximum norm of coefficients.
\end{Remark}

\begin{proof}[Proof of Proposition \ref{prop:affinoid}]
That $\iota$ is bijective is clear, so we prove the statement on the norms. Let $x\in \CI[t_1,...,t_s]/\fa_c$. For every $\varepsilon>0$, there exists $\tilde{x}=\sum{\alpha_{i_1,...,i_s}t_1^{i_1}\cdots t^{i_s}_s}\in \CI[t_1,...,t_s]$ lifting $x$ such that $\|x\|_c\geq \|\tilde{x}\|-\varepsilon$. Hence,
\begin{align*}
\|\iota(x)\|_{\rho} &= \left\|\sum_{i_1,...,i_s}{a_1^{i_1}\cdots a_s^{i_s}\otimes \alpha_{i_1,...,i_s}c^{-\deg(a_1^{i_1}\cdots a_s^{i_s})}}\right\|_{\rho} \leq \max_{i_1,...,i_s}|\alpha_{i_1,...,i_s}|=\|\tilde{x}\| \\
&\leq \|x\|_c+\varepsilon.
\end{align*}
Being true for all $\varepsilon>0$, we obtain $\|\iota(x)\|_{\rho}\leq \|x\|_c$. Conversely, let $\iota(x)\in A\otimes \CI$. Similarly, for all $\varepsilon>0$, there exists a representation of $\iota(x)$ as $\sum_i{b_i\otimes c_i}\in A\otimes \CI$ for which
\[
\|\iota(x)\|_\rho\geq \max_i \{|c_i|c^{\deg b_i}\}-\varepsilon.
\]
Let us write $b_i$ as $P_i(a_1,...,a_s)$ for a polynomial $P_i$ with coefficients in $\bF$. Then, 
\[
x=\sum_{i}{c_i P(c^{\deg a_1}t_1,...,c^{\deg a_s}t_s)} \in \CI[t_1,...,t_s]/\fa_c
\]
and $\|x\|_c$ is therefore bounded by $\max_i\{|c_i| \|P_i(c^{\deg a_1}t_1,...,c^{\deg a_s}t_s)\|\}$. Now, by design of the chosen presentation, one has
\[
\|P_i(c^{\deg a_1}t_1,...,c^{\deg a_s}t_s)\|= |c|^{\deg P_i(a_1,...,a_s)},
\]
and we conclude that $\|x\|_c\leq \|\iota(x)\|_\rho+\varepsilon$.
\end{proof}

In virtue of Proposition \ref{prop:affinoid}, we deduce that $\|\cdot \|_\rho$ is a norm, and we let $\CI\langle A\rangle_\rho$ denote the completion of $A\otimes \CI$ with respect to it. Also from Proposition \ref{prop:affinoid}, $\CI\langle A\rangle_\rho$ is an affinoid $\CI$-algebra, hence $\mathfrak{X}_\rho:=\Sp \CI\langle A \rangle_\rho$ is a $\CI$-affinoid variety. If $\rho<\rho'$, there is a canonical ring homomorphism $\CI\langle A \rangle_{\rho'}\to \CI\langle A \rangle_\rho$, and it induces a morphism of affinoid varieties denoted $t_{\rho,\rho'}:\mathfrak{X}_\rho\to \mathfrak{X}_{\rho'}$. 
\begin{Definition}
Let $\mathfrak{X}$ be the rigid analytic variety over $\CI$ resulting as the glueing of $(\mathfrak{X}_\rho)_\rho$ along the morphisms  $(t_{\rho,\rho'})_{\rho,\rho'}$ (for $\rho$, $\rho'$ running over $|\CI^\times|$). 
\end{Definition}

The next lemma is immediate (\emph{e.g.} \cite[\S 5.3~Prop. 5]{bosch}).
\begin{Lemma}\label{lem:admissible}
The family $(\mathfrak{X}_\rho\to \mathfrak{X})_\rho$ is an admissible covering of $\mathfrak{X}$.
\end{Lemma}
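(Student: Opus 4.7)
The plan is to unpack the definition of $\mathfrak{X}$ as a filtered glueing and then invoke the standard admissibility criterion cited in the statement, namely \cite[\S 5.3, Prop.~5]{bosch}.

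First I would verify that whenever $\rho<\rho'$, the transition morphism $t_{\rho,\rho'}\colon \mathfrak{X}_\rho\to\mathfrak{X}_{\rho'}$ realises $\mathfrak{X}_\rho$ as an affinoid (in fact Weierstrass) subdomain of $\mathfrak{X}_{\rho'}$. Picking $c,c'\in \CI^\times$ with $|c|=\rho$ and $|c'|=\rho'$, Proposition \ref{prop:affinoid} identifies both $\CI\langle A\rangle_\rho$ and $\CI\langle A\rangle_{\rho'}$ with quotients of Tate algebras in the \emph{same} variables $t_1,\ldots,t_s$, and the inclusion $\CI\langle A\rangle_{\rho'}\hookrightarrow \CI\langle A\rangle_\rho$ corresponds, in these coordinates, to the Weierstrass localisation cut out by $|t_i|\leq |c/c'|^{\deg a_i}$ for $i=1,\ldots,s$. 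In particular each $\mathfrak{X}_\rho\hookrightarrow \mathfrak{X}$ is an open immersion with affinoid image, and the family is totally ordered by inclusion.

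Second, by the very construction of $\mathfrak{X}$ as a glueing, the images of the $\mathfrak{X}_\rho$ cover $\mathfrak{X}$ set-theoretically. Given this, the admissibility criterion of \cite[\S 5.3, Prop.~5]{bosch} reduces to showing that every affinoid subdomain $\Sp(B)\hookrightarrow \mathfrak{X}$ already factors through some $\mathfrak{X}_\rho$: the pullbacks $(\Sp(B)\cap\mathfrak{X}_\rho)_\rho$ form a rising family of admissible opens covering $\Sp(B)$, and since $\Sp(B)$ is quasi-compact in the strong $G$-topology one of these opens already equals $\Sp(B)$.

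I do not anticipate a serious obstacle here; the statement is essentially a formal consequence of the glueing construction combined with the cited criterion of Bosch, the only non-trivial input being the Weierstrass description of the transition maps, which is a direct unpacking of Proposition \ref{prop:affinoid}.
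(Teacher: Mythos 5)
Your reduction of admissibility to the claim that every morphism from an affinoid $\Sp(B)$ into $\mathfrak{X}$ factors through some $\mathfrak{X}_\rho$ is a reasonable strategy (the paper itself does not argue this way: it treats the lemma as immediate because admissibility of the covering by the glued pieces is part of the conclusion of the glueing construction cited from \cite[\S 5.3, Prop.~5]{bosch}). The Weierstrass description of the transition maps via Proposition \ref{prop:affinoid} is also fine. The problem is the last step.

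The assertion ``the pullbacks $(\Sp(B)\cap\mathfrak{X}_\rho)_\rho$ form a rising family of admissible opens covering $\Sp(B)$, and by quasi-compactness one of them equals $\Sp(B)$'' is a genuine gap. Quasi-compactness in the strong $G$-topology only says that every \emph{admissible} covering has a finite subcovering; the family of pullbacks is a priori only a set-theoretic covering by admissible opens, and knowing it is admissible is essentially the statement you are trying to prove, so the argument is circular as written. Worse, the general principle you invoke is false: in the closed unit disk $\Sp \CI\langle t\rangle$, fix $L\in|\CI^\times|$ with $L<1$ and radii $s_n\in|\CI^\times|$ increasing to $L$ with $s_n<L$; the sets $V_n=\{x: |t(x)|\le s_n\}\cup\{x: L\le |t(x)|\le 1\}$ form a rising family of \emph{proper} affinoid subdomains whose union is the whole set of classical points, yet no $V_n$ is the disk (and the covering is not admissible). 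The factorization you need must therefore use the specific shape of the pullbacks: writing $b_i\in B$ for the pullback of $a_i\otimes 1$, the preimage of $\mathfrak{X}_\rho$ in $\Sp(B)$ is the Weierstrass locus $\{x: |b_i(x)|\le \rho^{\deg a_i},\ i=1,\dots,s\}$, and the maximum modulus principle for affinoid algebras gives $|b_i|_{\sup}<\infty$, so this locus is all of $\Sp(B)$ once $\rho^{\deg a_i}\ge |b_i|_{\sup}$ for all $i$; then the morphism factors through $\mathfrak{X}_\rho$ and the trivial refinement settles the admissibility criterion. With that repair (or by simply invoking the glueing proposition as the paper does) the proof is complete.
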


By definition, we have $\mathfrak{X}:=\varinjlim_\rho \mathfrak{X}_\rho$ as ringed spaces. In particular, the ring of global sections of $\mathfrak{X}$ is 
\[
\CI\langle \!\langle A\rangle\!\rangle := \varprojlim_\rho \CI\langle A \rangle_\rho.
\]
For $f\in A\otimes \CI$, observe that we have $\|f^{(1)}\|_\rho=\|f\|_{\rho^{1/q}}^q$, so that twisting induces an isomorphism of rings and spaces:
\begin{equation}\label{eq:tau-on-rho}
\tau:\CI\langle A \rangle_\rho\stackrel{\sim}{\longrightarrow} \CI\langle A \rangle_{\rho^{1/q}}, \quad \tau:\mathfrak{X}_{\rho^{1/q}}\stackrel{\sim}{\longrightarrow} \mathfrak{X}_{\rho}.
\end{equation}
Compiling these for all $\rho>0$, we obtain isomorphisms
\begin{equation}
\tau:\CI\langle\!\langle A \rangle\!\rangle\stackrel{\sim}{\longrightarrow} \CI\langle\!\langle A \rangle\!\rangle, \quad \tau:\mathfrak{X}\stackrel{\sim}{\longrightarrow} \mathfrak{X}.
\end{equation}

\begin{Remark}
We refer to \cite[\S 5.4]{bosch} for the fact that $\fX$ is isomorphic to $(\Spec A\otimes \CI)^{\operatorname{rig}}$ as rigid analytic spaces. 
\end{Remark}
\begin{Example}
Let $C=\bP^1_\bF$ and let $\infty$ be the point $[0:1]$. We identify $A$ with $\bF[t]$ and $A\otimes \CI$ with the polynomial ring $\CI[t]$. Denoting $\ell(t)$ by $\theta$, we have $\fj=(t-\theta)$ and, more generally, $\tau^i\fj=(t-\theta^{q^i})$. We have 
\begin{align*}
\CI\langle A \rangle_\rho &\cong \left\{\sum_{n=0}^{\infty}{a_n t^n}\in \CI[\![t]\!]~\bigg |~\lim_{n\to \infty}|a_n|\rho^n \to 0\right\}, \\
\CI\langle\!\langle A \rangle\!\rangle &\cong \left\{\sum_{n=0}^{\infty}{a_n t^n}\in \CI[\![t]\!]~\bigg |~\forall \rho>0:~\lim_{n\to \infty}|a_n|\rho^n \to 0\right\},
\end{align*}
where $\tau$ acts as $\sum_{n=0}^{\infty}{a_n t^n}\mapsto \sum_{n=0}^{\infty}{a_n^q t^n}$. The one-dimensional Tate algebra over $\CI$ is recovered by $\CI\langle A \rangle_1$.
\end{Example}

\subsection*{Sheaves and divisors on $\mathfrak{X}$}

Given a commutative ring $R$ with unit, we denote by $\operatorname{Q}(R)$ its total ring of fractions; \emph{i.e.} the localization of $R$ by the set of non-zero divisors. Given an admissible open $\mathfrak{U}$ in $\mathfrak{X}$, we denote by $\cM_\mathfrak{X}(\mathfrak{U})$ the total ring of fractions $\Q(\cO_\mathfrak{X}(\mathfrak{U}))$. According to Bosch \cite[\S 2]{bosch-meromorph}, the assignment
\[
\mathfrak{U}\longmapsto \cM_\mathfrak{X}(\mathfrak{U})
\]
is a sheaf on $\underline{\mathfrak{X}}$; i.e with respect to the strong Grothendieck topology. We call $\cM_\mathfrak{X}$ the sheaf of meromorphic functions. A \emph{fractional ideal sheaf} on $\underline{\mathfrak{X}}$ is an invertible sub-$\cO_\mathfrak{X}$-module sheaf of $\cM_\mathfrak{X}$. Lacking references, we recall briefly the dictionary among fractional ideal sheaves and rigid analytic divisors. 

For any $x\in |\mathfrak{X}|=|X|$, we have $\cM_{\mathfrak{X},x}=\Q(\cO_{\mathfrak{X},x})$ where $\cO_{\mathfrak{X},x}$ is a discrete valuation ring with residue field $\CI$ which coincides with $\cO_{X,x}$. Given $f\in \mathcal{M}_\mathfrak{X}(\mathfrak{U})$, a non zero meromorphic function on an admissible open $\mathfrak{U}$, we define its divisor as follows. For $x\in \mathfrak{U}$, we let $\operatorname{ord}_x f$ be the order of $f$ in the discrete valued field $\cM_{\mathfrak{X},x}$. The \emph{divisor of $f$ on $\mathfrak{U}$} is the formal (possibly infinite) sum: 
\[
\operatorname{div} f:= \sum_{x\in \mathfrak{U}}{(\operatorname{ord}_x f)\cdot x}. 
\]
Let $D=\sum_{x\in \mathfrak{U}}{n_x\cdot x}$ be a formal $\bZ$-sum of points of $\mathfrak{X}$. We say that $D$ is a \emph{rigid analytic divisor} if there exists an admissible open cover $(\mathfrak{U}_i\stackrel{\iota_i}{\to} \mathfrak{U})_{i\in I}$ such that, for each $i\in I$, there exists $f_i\in \cM_\mathfrak{X}(\mathfrak{U}_i)$ for which
\[
D_{|\mathfrak{U}_i}:=\sum_{x\in \mathfrak{U}_i}{n_{x}\cdot x}=\operatorname{div} f_i.
\]
More generally, we call \emph{divisor} on $\mathfrak{X}$ any formal $\bZ$-sum of points of $\mathfrak{X}$. To any divisor $D$ on $\mathfrak{X}$, one associates a presheaf $\cO_\mathfrak{X}(D)$ given on an admissible open $\mathfrak{U}$ as
\[
\cO_\mathfrak{X}(D)(\mathfrak{U}):=\{g\in \cM_\mathfrak{X}(\mathfrak{U})~|~\operatorname{div} g+D\geq 0\}.
\]
Whenever $D$ is rigid analytic, $\cO_\mathfrak{X}(D)$ is further a fractional ideal sheaf on $\underline{\mathfrak{X}}$. Conversely, to any fractional ideal sheaf $\mathcal{I}$, there is a rigid analytic divisor $D$ such that $\mathcal{I}=\cO_\mathfrak{X}(D)$. Observe in addition that given $f\in \cM_\mathfrak{X}(\mathfrak{X})$ non zero, we have
\[
\cO_\mathfrak{X}(\operatorname{div} f)(\mathfrak{U})=\{g\in \mathcal{M}_\mathfrak{X}(\mathfrak{U})~|~g\cdot f_{|\mathfrak{U}}\in \mathcal{O}_\mathfrak{X}(\mathfrak{U})\}.
\]
Lemma \ref{lem:m-dist-j} has the following counterpart:
\begin{Proposition}\label{prop:O(J)-equals-intersection-O(Ju)}
For $u\in A$ a non constant element, let $\fj_u$ denote the principal ideal $(u\otimes 1-1\otimes \ell(u))$ of $A\otimes \CI$. Also, consider the divisor $J:=\fj+\tau\fj+\tau^2\fj+...$ and $J_u:=\fj_u+\tau\fj_u+\tau^2\fj_u+...$ on $\mathfrak{X}$. Then, both $J$ and $J_u$ are rigid analytic and, for any $n\in \bZ$ and $\mathfrak{U}$ an affinoid subdomain of $\mathfrak{X}$, we have the equality:
\[
\cO_\mathfrak{X}(n\cdot J)(\mathfrak{U})=\bigcap_{\substack{u\in A \\ \text{separating}}} \cO_\mathfrak{X}(n\cdot J_u)(\mathfrak{U})
\]
where the intersection is taken over separating elements $u\in A$. In particular, $\bigcap_{u} \cO_\mathfrak{X}(n\cdot J_u)$ is a sheaf.
\end{Proposition}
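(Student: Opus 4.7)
My plan is to prove the equality of sections for each affinoid subdomain $\fU$---the sheaf assertion will then follow automatically---after first disposing of the rigid-analyticity of $J$ and $J_u$ as a preliminary step.

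\emph{Rigid-analyticity.} I would verify the definition on the admissible cover $(\fX_\rho)_{\rho>0}$ from Lemma~\ref{lem:admissible}. Since $|\ell(u)|^{q^i}\to\infty$ for any separating $u$, only finitely many of the primes $\tau^i\fj$---and, using the factorization $(u\otimes 1-1\otimes \ell(u))=\prod_\sigma \fj^\sigma$ extracted from the proof of Lemma~\ref{lem:m-dist-j}, only finitely many of the conjugate primes $\tau^i\fj^\sigma$ with $\sigma\neq\mathrm{id}$---meet any given $\fX_\rho$. A suitable finite product of the elements $\tau^i(u\otimes 1-1\otimes \ell(u))$ then realises $J_u|_{\fX_\rho}$ as a principal divisor. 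For $J|_{\fX_\rho}$ one refines the cover further by picking, around each $\tau^i\fj\in \fX_\rho$, an affinoid subdomain disjoint from the conjugate companions $\tau^i\fj^\sigma$, on which the same function has divisor exactly $\tau^i\fj$; the complement of $\operatorname{supp}(J)$ in $\fX_\rho$ completes the admissible cover.

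\emph{Equality of sections.} The factorization above gives $J_u=\sum_{i,\sigma}\tau^i\fj^\sigma\geq \sum_i\tau^i\fj=J$ as effective divisors, whence $\cO_\fX(n\cdot J)(\fU)\subseteq \cO_\fX(n\cdot J_u)(\fU)$ for every separating $u$, and \emph{a fortiori} $\cO_\fX(n\cdot J)(\fU)\subseteq \bigcap_u \cO_\fX(n\cdot J_u)(\fU)$. Conversely, let $g\in \bigcap_u \cO_\fX(n\cdot J_u)(\fU)$; I would check the defining condition $\operatorname{ord}_x g+n\cdot \operatorname{ord}_x J\geq 0$ pointwise. If $x=\tau^i\fj\in \operatorname{supp}(J)$, then $\operatorname{ord}_x J=\operatorname{ord}_x J_u=1$ for every separating $u$, and the inequality is inherited from membership in any single $\cO_\fX(n\cdot J_u)(\fU)$. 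If $x\notin\operatorname{supp}(J)$, the associated maximal ideal $\fm$ of $A\otimes \CI$ differs from every $\tau^i\fj$, and Lemma~\ref{lem:m-dist-j} produces a separating $u$ with $\tau^i\fj_u\not\subseteq \fm$ for all $i\geq 0$; hence $\operatorname{ord}_x J_u=0$ for that $u$, forcing $\operatorname{ord}_x g\geq 0$. Both cases together yield $g\in \cO_\fX(n\cdot J)(\fU)$.

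The ``in particular'' clause is then automatic, as an arbitrary intersection of subsheaves of a fixed sheaf is a sheaf. The main technical obstacle I anticipate lies in the rigid-analytic bookkeeping of the first step---producing admissible subdomains that cleanly separate each prime $\tau^i\fj$ from its conjugate companions $\tau^i\fj^\sigma$ inside a given $\fX_\rho$---whereas the core of the second step is essentially a direct translation of Lemma~\ref{lem:m-dist-j} into the language of divisors and orders of vanishing.
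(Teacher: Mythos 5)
Your proposal is correct and follows essentially the same route as the paper: reduce to the finitely many iterates $\tau^i\fj$ meeting a given $\fX_\rho$, produce local equations to see that $J$ and $J_u$ are rigid analytic, and invoke Lemma~\ref{lem:m-dist-j} at any point outside $\operatorname{supp}(J)$ — the paper phrases this last step via a maximal ideal containing the denominator of $f$ rather than via pointwise orders, and gets local equations for $J$ from Zariski-local principality of $\fj\cdot\tau\fj\cdots\tau^t\fj$ in the Dedekind ring $A\otimes\CI$ rather than from a point-separating cover, but these are cosmetic differences. Two small caveats: your neighbourhood of $\tau^i\fj$ must also avoid the other points $\tau^{i'}\fj$, $i'\neq i$ (not only the conjugates $\tau^i\fj^{\sigma}$), for $u\otimes 1-1\otimes\ell(u)^{q^i}$ to be a local equation of $J$ there, and your forward inclusion deduced from $J_u\geq J$ (like the paper's own ``clearly'') is only valid for $n\geq 0$, which is the only case used later anyway.
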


\begin{Lemma}\label{lem:inversion-ao1-1oa}
Let $\rho\in |\CI^\times|$ and let $t=\lfloor \log_q \rho\rfloor$. Let $a\in A$ be non constant. Then $a\otimes 1-1\otimes \ell(a)^{q^j}$ is invertible in $\CI\langle A \rangle_\rho$ for all $j>t$. In particular, $\tau^j\fj$ is invertible in $\CI\langle A \rangle_\rho$ for all $j>t$.
\end{Lemma}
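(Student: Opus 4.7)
The plan is to reduce the invertibility at radius $\rho$ with general $j$ to invertibility at the smaller radius $\sigma:=\rho^{1/q^j}$ with $j=0$, by means of the twisting isomorphism. Concretely, from $\|\tau(f)\|_{\rho}=\|f\|_{\rho^{1/q}}^{q}$ the ring map $\tau$ extends to a continuous isomorphism between $\CI\langle A\rangle_{\rho^{1/q}}$ and $\CI\langle A\rangle_{\rho}$, whose iterate $\tau^j$ is an isomorphism of $\CI$-algebras relating $\CI\langle A\rangle_{\rho^{1/q^j}}$ with $\CI\langle A\rangle_{\rho}$ and pairing $a\otimes 1-1\otimes \ell(a)$ with $a\otimes 1-1\otimes \ell(a)^{q^j}$. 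Hence the latter is a unit in $\CI\langle A\rangle_{\rho}$ iff the former is a unit in $\CI\langle A\rangle_{\sigma}$, so it suffices to show that $a\otimes 1-1\otimes \ell(a)$ is a unit in $\CI\langle A\rangle_{\sigma}$ whenever $\sigma<q$. Given $j>t=\lfloor \log_q\rho\rfloor$, one has $\sigma=\rho^{1/q^j}<q$ because $j\geq t+1$ and $q^{t+1}>t+1>\log_q\rho$; here the first inequality is the elementary estimate $q^n>n$ for every $n\in\bZ$ with $q\geq 2$ (trivial for $n\leq 0$, by induction for $n\geq 1$), and the second follows from $t=\lfloor\log_q\rho\rfloor$.

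To establish invertibility of $a\otimes 1-1\otimes \ell(a)$ in $\CI\langle A\rangle_{\sigma}$ for $\sigma<q$, I factor
\[
a\otimes 1-1\otimes \ell(a)=-(1\otimes \ell(a))\bigl(1-(a\otimes 1)(1\otimes \ell(a)^{-1})\bigr)
\]
and invert the right-hand factor via a geometric series $\sum_{k\geq 0}\bigl((a\otimes 1)(1\otimes \ell(a)^{-1})\bigr)^k$, which converges in the complete non-archimedean ring $\CI\langle A\rangle_{\sigma}$. The needed estimate is $\|(a\otimes 1)(1\otimes \ell(a)^{-1})\|_\sigma<1$, equivalently $\|a\otimes 1\|_\sigma<|\ell(a)|=q^{\deg a}$, where the latter equality is the usual normalization of the absolute value at $\infty$. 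Unwinding Proposition~\ref{prop:affinoid} with $|c|=\sigma$, the element $a\otimes 1$ lifts through $\iota$ to the polynomial $P(c^{\deg a_1}t_1,\ldots,c^{\deg a_s}t_s)$, where $a=P(a_1,\ldots,a_s)$ has coefficients in $\bF$ and $d$-weighted degree equal to $\deg a$; since the coefficients of $P$ have absolute value at most $1$, every monomial of this lift has Gauss norm at most $\max(1,\sigma^{\deg a})$, giving $\|a\otimes 1\|_\sigma\leq \max(1,\sigma^{\deg a})<q^{\deg a}$ because $\deg a\geq 1$.

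For the second assertion, the Lemma describing the generators of $\fj$, combined with the ring automorphism $\tau^j$ of $A\otimes \CI$, gives that $\tau^j\fj$ is generated in $A\otimes \CI$ by $\{u\otimes 1-1\otimes \ell(u)^{q^j}\mid u\in A~\text{separating}\}$. Any such $u$ is non-constant, so the first part of the lemma makes each such generator a unit in $\CI\langle A\rangle_{\rho}$, and thus the extension of $\tau^j\fj$ to $\CI\langle A\rangle_\rho$ is the whole ring. The main (minor) obstacle is the norm bound $\|a\otimes 1\|_\sigma\leq \max(1,\sigma^{\deg a})$, which requires carefully tracking the weighted degrees through Proposition~\ref{prop:affinoid} and uses crucially that the coefficients of $a$ in any fixed finite generating family of $A$ lie in $\bF$; once this is in hand, everything else is the standard non-archimedean inversion trick.
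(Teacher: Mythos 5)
Your proof is correct and rests on exactly the same mechanism as the paper's: the paper inverts $a\otimes 1-1\otimes \ell(a)^{q^j}$ directly in $\CI\langle A \rangle_\rho$ via the geometric series $\frac{-1}{1\otimes \ell(a)^{q^j}}\sum_{n\geq 0}\left(\frac{a\otimes 1}{1\otimes \ell(a)^{q^j}}\right)^{n}$, whose convergence is the same estimate $\|a\otimes 1\|_\rho\leq \rho^{\deg a}<q^{q^j\deg a}=|\ell(a)^{q^j}|$ that you verify. Your preliminary reduction through the twist isomorphism $\tau^j$ to the case $j=0$ at radius $\rho^{1/q^j}$, and your detour through Proposition \ref{prop:affinoid} for the norm bound (the definition of $\|\cdot\|_\rho$ already gives $\|a\otimes 1\|_\rho\leq \rho^{\deg a}$ directly), are harmless repackagings, and your explicit argument for the ``in particular'' clause via the generators of $\fj$ just spells out what the paper leaves implicit.
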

\begin{proof}
It suffices to note that, for $j>t$, the series
\[
\frac{-1}{1\otimes \ell(a)^{q^j}}\sum_{n=0}^{\infty}{\left(\frac{a\otimes 1}{1\otimes \ell(a)^{q^j}}\right)^n}
\]
converges in $\CI\langle A \rangle_\rho$ to an inverse of $a\otimes 1-1\otimes \ell(a)^{q^j}$.
\end{proof}

\begin{proof}[Proof of Proposition \ref{prop:O(J)-equals-intersection-O(Ju)}]
Let $\mathfrak{U}$ be an affinoid subdomain of $\mathfrak{X}$. By Lemma \ref{lem:admissible}, there exists $\rho\in |\CI^\times|$ such that $\mathfrak{U}\to \mathfrak{X}$ factors through $\mathfrak{X}_{\rho}\to \mathfrak{X}$. Let $t=\lfloor \log_q \rho\rfloor$. 

On one hand, we have by Lemma \ref{lem:inversion-ao1-1oa},
\begin{equation}\label{eq:former}
\cO_\mathfrak{X}(n\cdot J)(\mathfrak{U})=\{f\in \Q(\cO_\mathfrak{X}(\mathfrak{U}))~|~\forall h\in (\fj\cdot \tau\fj \cdots \tau^t\fj)^n:~hf\in \cO_\mathfrak{X}(\mathfrak{U})\}.
\end{equation}
To prove that $J$ is rigid analytic, observe that the ideal $\fj\cdot \tau\fj \cdots \tau^t \fj$ of $A\otimes \CI$ is locally principal (as $A\otimes \CI$ is a Dedekind domain). There thus exists a Zariski cover $(U_i\to X)_{i\in I}$ such that $\fj\cdot \tau\fj \cdots \tau^t \fj$ is principal on each $U_i$. Rigidifying, we obtain a covering $(U_i^{\operatorname{rig}}\times_{\mathfrak{X}}\mathfrak{U}\to \mathfrak{U})_{i\in I}$ such that $\fj\cdot \tau\fj \cdots \tau^t \fj$ is principal on each admissible open. Hence, $J$ is rigid analytic.

On the other hand, 
\begin{equation}\label{eq:later}
\bigcap_{u} \cO_\mathfrak{X}(n\cdot J_u)(\mathfrak{U})=\left\{f\in \Q(\cO_\mathfrak{X}(\mathfrak{U}))~\bigg |~\forall u:\left(\prod_{j=0}^t{(u\otimes 1-1\otimes \ell(u)^{q^j})}\right)^{\!n}\! f \in \cO_\mathfrak{X}(\mathfrak{U})\right\}.
\end{equation}
Hence $J_u$ is also a rigid analytic divisor. 

We now prove equality among \eqref{eq:former} and \eqref{eq:later}. Clearly, the former is included in the latter. Conversely, let $f=ab^{-1}$ be an element of \eqref{eq:later}, where $a,b\in \cO_\mathfrak{X}(\mathfrak{U})$ with $b$ a non-zero divisor. Let $\fm$ be a maximal ideal in $\cO_\mathfrak{X}(\mathfrak{U})$ for which $b\in \fm$ but $a\notin \fm$. As the rigid analytic GAGA induces a bijection on closed points, $\fm$ arises from a maximal ideal of $A\otimes \CI$ (which we still denote by $\fm$). By assumption, for all $u\in A$ separating,
\[
\left(\prod_{j=0}^t{(u\otimes 1-1\otimes \ell(u)^{q^j})}\right)^{\!n} a \subset \fm.
\]
Since $\fm$ is prime and $a\notin \fm$, there exists $j\in \{0,...,t\}$ for which $u\otimes 1-1\otimes \ell(u)^{q^j}\in \fm$. By Lemma \ref{lem:m-dist-j}, this implies $\fm\in \{\fj, \tau\fj,\tau^2\fj,...\}$. Hence $f$ belongs to \eqref{eq:former}.
\end{proof}

\subsection*{Costable subsets and costable sheaves}
We introduce next the notion of \emph{costability}. Denote by $\tau:\underline{\mathfrak{X}}\to \underline{\mathfrak{X}}$ the morphism of sites given by the $A$-linear Frobenius, and denote by $\tau^{-1}\cM_{\mathfrak{X}}$ the inverse image sheaf.
\begin{Definition}\label{def:costable}
Let $\mathfrak{U}\subset \mathfrak{X}$ be admissible open, $(\mathfrak{U}_i\to \mathfrak{U})_{i\in I}$ an admissible covering.
\begin{enumerate}
\item We say that $\mathfrak{U}$ is \emph{costable} if $\mathfrak{U}\subseteq \tau\mathfrak{U}$.
\item We say that $(\mathfrak{U}_i\to \mathfrak{U})_{i\in I}$ is \emph{costable} if $\mathfrak{U}$ and $\mathfrak{U}_i$ are costable for all $i\in I$.
\item Given any costable admissible open $\mathfrak{U}$, we denote by $f\mapsto f^{(1)}$ the composition
\begin{equation}
\begin{tikzcd}[column sep=5em]
\bullet^{(1)}:\cM_{\mathfrak{X}}(\mathfrak{U}) \arrow[r,"f\mapsto f^{(1)}"] & \tau^{-1}\cM_{\mathfrak{X}}(\mathfrak{U})=\cM_{\mathfrak{X}}(\tau \mathfrak{U}) \arrow[r,"\text{restr~}\tau\mathfrak{U}\to \mathfrak{U}"] & \cM_{\mathfrak{X}}(\mathfrak{U}).
\end{tikzcd}
\end{equation}
Let $\cL\subset \cM_{\mathfrak{X}}$ be a sub-$\cO_{\mathfrak{X}}$-module sheaf. We say that $\cL$ is \emph{costable} if it verifies $\mathcal{L}(\mathfrak{U})^{(1)}\subset \mathcal{L}(\mathfrak{U})$ for any costable admissible open $\mathfrak{U}$. 
\end{enumerate}
\end{Definition}

\begin{Example}
\begin{enumerate}[label=$(\alph*)$]
    \item The admissible open $\mathfrak{D}:=\Sp \CI\langle A \rangle=\Sp \CI\langle A \rangle_1$ is costable. More generally, $\mathfrak{X}_{\rho}=\Sp \CI\langle A \rangle_\rho$ is costable if and only if $\rho\geq 1$.
    \item Both $\cO_\mathfrak{X}$ and $\cM_{\mathfrak{X}}$ are costable. The sheaf $\cO_{\mathfrak{X}}(D)$ is costable whenever $D$ is an effective rigid analytic divisor that verifies $D\geq \tau D$.
\end{enumerate}
\end{Example}

Let $\operatorname{Cat}^\tau \mathfrak{X}$ be the full subcategory of $\operatorname{Cat}\mathfrak{X}$ consisting of costable objects, and let $\operatorname{Cov}^\tau \mathfrak{X}$ be the subset of $\operatorname{Cov} \mathfrak{X}$ consisting of costable coverings. It is formal to verify that the pair $(\operatorname{Cat}^\tau \mathfrak{X},\operatorname{Cov}^\tau \mathfrak{X})$ forms a site, which we denote by $\,^\tau\underline{\mathfrak{X}}$ and refer to as the \emph{costable site of $\mathfrak{X}$}.

\section{Anderson modules over rigid analytic spaces}

\subsection*{$E(\CI)$-valued meromorphic functions}
Let $E$ be an Anderson module of dimension $d$ over $\CI$. This whole section is devoted to the construction of the sheaf of \emph{$E(\CI)$-valued functions}: it will consist in a sheaf of $A\otimes A$-modules on the costable site $\,^\tau\underline{\mathfrak{X}}$ which extends $A\hat{\otimes} E(\CI)$ on the rigid analytic unit disk $\mathfrak{D}:=\Sp \CI\langle A \rangle$. To anticipate generalizations to polar divisor and residues in the next sections, we not only focus on the holomorphic version, but also divisor versions. To materialize this, we fix an \emph{effective fractional ideal sheaf} $\mathcal{L}$ on $\mathfrak{X}$, that is, a subsheaf of $\cO_{\mathfrak{X}}$-modules $\mathcal{L}$  of $\mathcal{M}_{\mathfrak{X}}$ on $\underline{\mathfrak{X}}$ that contains $\cO_{\mathfrak{X}}$. The latter assumption ensures that, for any admissible open $\mathfrak{U}$, there is a map 
\begin{equation}\label{eq:canonical-inclusion-sheaf}
A\otimes \CI\hookrightarrow \cO_{\mathfrak{X}}(\mathfrak{U})\hookrightarrow \cL(\mathfrak{U}). 
\end{equation}
In practice, $\mathcal{L}$ will either be $\mathcal{O}_\mathfrak{X}$, $\mathcal{O}_\mathfrak{X}(D)$ for a certain effective rigid analytic divisor $D$, or $\cM_\mathfrak{X}$ itself.

\begin{Definition}
Let $\kappa:E\stackrel{\sim}{\to}\bG_a^d$ be a choice of coordinates. We define $\mathcal{E}_\kappa(\mathcal{L})(\mathfrak{U})$ and $\iota_\kappa(\mathcal{L})(\mathfrak{U})$ as resulting from the pushout square
\begin{equation}\label{eq:pushout-def}
\begin{tikzcd}
(A\otimes \CI)^d \arrow[r,"\kappa^{-1}","\sim"']\arrow[d,hookrightarrow,"\eqref{eq:canonical-inclusion-sheaf}"'] & A\otimes E(\CI) \arrow[d,"\iota_\kappa(\mathcal{L})(\mathfrak{U})"] \\
\cL(\mathfrak{U})^d \arrow[r] & \cE_{\kappa}(\mathcal{L})(\mathfrak{U})
\end{tikzcd}
\end{equation}
in the category of $A$-modules.
\end{Definition}

As the top horizontal map of diagram \eqref{eq:pushout-def} is an isomorphism, we get by the universal property of the pushout square that the lower horizontal map is an isomorphism as well. As its inverse will appear more naturally, we give it a name:
\begin{Definition}
We denote by $m_\kappa(\mathfrak{U}):\mathcal{E}_\kappa(\mathcal{L})(\mathfrak{U})\to \cL(\mathfrak{U})^d$ the inverse of the lower horizontal map in diagram \eqref{eq:pushout-def}.
\end{Definition}

Let $\mathfrak{U}\to \mathfrak{V}$ be a morphism in $\operatorname{Cat}\mathfrak{X}$ (\emph{i.e.} an inclusion of admissible opens). The restriction map $\cM_{\mathfrak{X}}(\mathfrak{V})\to \cM_{\mathfrak{X}}(\mathfrak{U})$ together with the universality of diagram \eqref{eq:pushout-def} produces a morphism
\begin{equation}\label{eq:restriction-map}
\rho_\kappa(\mathfrak{U}\to \mathfrak{V}):\cE_{\kappa}(\mathcal{L})(\mathfrak{V})\longrightarrow \cE_{\kappa}(\mathcal{L})(\mathfrak{U})
\end{equation}
of $A$-modules.  The data of $\mathcal{E}_\kappa(\mathcal{L})(\mathfrak{U})$ for all $\mathfrak{U}$ and $\rho_\kappa$ forms a contravariant functor from $\operatorname{Cat}\mathfrak{X}$ to $\mathbf{Mod}_{A}$, and hence a presheaf on the site $\underline{\mathfrak{X}}$. 

\begin{Lemma}\label{lem:Ekappa-sheaf}
$\mathcal{E}_\kappa(\mathcal{L})$ is a sheaf of $A$-modules on $\underline{\mathfrak{X}}$.
\end{Lemma}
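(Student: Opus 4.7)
The plan is to show that $m_\kappa$ assembles into an isomorphism of presheaves from $\mathcal{E}_\kappa(\mathcal{L})$ to $\mathcal{L}^d$; since $\mathcal{L}^d$ is a sheaf, the sheaf property of $\mathcal{E}_\kappa(\mathcal{L})$ follows immediately.

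First I would recall the observation already highlighted in the excerpt: for each admissible open $\mathfrak{U}$, the bottom horizontal arrow $b_\kappa(\mathfrak{U}) \colon \mathcal{L}(\mathfrak{U})^d \to \mathcal{E}_\kappa(\mathcal{L})(\mathfrak{U})$ of the pushout \eqref{eq:pushout-def} is an $A$-linear isomorphism, because its parallel arrow $\kappa^{-1}$ is an isomorphism and the pushout of an isomorphism is again an isomorphism. Consequently $m_\kappa(\mathfrak{U}) = b_\kappa(\mathfrak{U})^{-1}$ is an $A$-linear isomorphism for every $\mathfrak{U}$.

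Next I would check that the family $(m_\kappa(\mathfrak{U}))_\mathfrak{U}$ defines a morphism of presheaves of $A$-modules from $\mathcal{E}_\kappa(\mathcal{L})$ to $\mathcal{L}^d$ (where the latter inherits its restriction maps from $\mathcal{L}$). For an inclusion $\mathfrak{U} \to \mathfrak{V}$ in $\operatorname{Cat}\mathfrak{X}$, this amounts to the identity
\begin{equation*}
\rho_\kappa(\mathfrak{U} \to \mathfrak{V}) \circ b_\kappa(\mathfrak{V}) \;=\; b_\kappa(\mathfrak{U}) \circ \mathrm{restr}^{\mathfrak{V}}_{\mathfrak{U}},
\end{equation*}
where $\mathrm{restr}^{\mathfrak{V}}_{\mathfrak{U}} \colon \mathcal{L}(\mathfrak{V})^d \to \mathcal{L}(\mathfrak{U})^d$ is the $d$-fold restriction map of $\mathcal{L}$. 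This equality is precisely one of the two defining identities forced upon $\rho_\kappa$ by the universal property of the pushout, by the very construction \eqref{eq:restriction-map} (the other being $\rho_\kappa \circ \iota_\kappa(\mathcal{L})(\mathfrak{V}) = \iota_\kappa(\mathcal{L})(\mathfrak{U})$).

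Finally, since $\mathcal{L} \subset \mathcal{M}_\mathfrak{X}$ is by hypothesis a sub-$\mathcal{O}_\mathfrak{X}$-module sheaf on $\underline{\mathfrak{X}}$, it is in particular a sheaf of $A$-modules, and hence so is $\mathcal{L}^d$; being isomorphic to $\mathcal{L}^d$ as a presheaf, $\mathcal{E}_\kappa(\mathcal{L})$ is therefore itself a sheaf of $A$-modules on $\underline{\mathfrak{X}}$. Given the formal character of the argument, I do not expect a genuine obstacle: the real content of the lemma is the observation that the abstract pushout $\mathcal{E}_\kappa(\mathcal{L})(\mathfrak{U})$ is, via $\kappa$, identifiable with $\mathcal{L}(\mathfrak{U})^d$ in a manner natural in $\mathfrak{U}$, so that the sheaf property simply transfers along this identification.
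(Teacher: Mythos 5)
Your proof is correct and follows the same route as the paper: the paper's argument is precisely that the family $m_\kappa=(m_\kappa(\mathfrak{U}))_\mathfrak{U}$ gives a natural isomorphism of presheaves between $\mathcal{E}_\kappa(\mathcal{L})$ and $\mathcal{L}^d$, and the sheaf property transfers since $\mathcal{L}^d$ is a sheaf. You merely spell out the naturality check (via the universal property of the pushout) that the paper leaves implicit.
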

\begin{proof}
The data of $m_\kappa=(m_\kappa(\mathfrak{U}))_{\mathfrak{U}}$ indexed over objects of $\operatorname{Cat}\mathfrak{X}$ defines an equivalence of functors among $\mathcal{E}_\kappa(\mathcal{L})$ and $\cL^d$. As $\cL^d$ is a sheaf, so is $\mathcal{E}_\kappa(\mathcal{L})$.
\end{proof}

\subsection*{Action of $A\otimes A$}\label{sec:AoA}

In this subsection we show that, under some costability assumptions, diagram \eqref{eq:pushout-def} can be upgraded to a diagram of $A\otimes A$-modules preserving the $A\otimes A$-module structure on $A\otimes E(\CI)$. Throughout this subsection, $\cL$ is a costable effective fractional ideal sheaf on $\mathfrak{X}$. \\

Let $\kappa:E\stackrel{\sim}{\to} \bG_a^d$ be a choice of coordinates. Through $\kappa$, we obtain a ring morphism $\varphi_\kappa:A\to \Mat_d(\CI\{\tau\})$ as resulting from the composition
\[
A \longrightarrow \End_{\bF\text{-vs}/\CI}(E) \stackrel{\text{via~}\kappa}{\cong} \End_{\bF\text{-vs}/\CI}(\bG_a^d) \cong \Mat_d(\CI\{\tau\})
\]
where $\CI\{\tau\}$ is the non-commutative ring of $\tau$-polynomials. We thus obtain on $(A\otimes \CI)^d$ an $A\otimes A$-module structure where, for $a\in A$, $a\otimes 1$ acts by multiplication and where $1\otimes a$ acts by $\id\otimes \varphi_\kappa(a)$. This makes the top arrow of diagram \eqref{eq:pushout-def} a morphism of $A\otimes A$-modules.\\

If $\mathfrak{U}$ is a costable admissible open, then $\mathcal{L}(\mathfrak{U})^d$ admits an action of $\varphi_\kappa$ described as follows: An element $a\in A$ acts on $F\in \mathcal{L}(\mathfrak{U})^d$ as
\begin{equation}\label{eq:action-on-L(U)}
\varphi_\kappa(a)(F)=A_0 F+A_1 F^{(1)}+...+ A_s F^{(s)} \in \mathcal{L}(\mathfrak{U})^d
\end{equation}
according to the notations of Definition \ref{def:costable}.
\begin{Remark}
If $\mathfrak{U}$ was not assumed to be costable, then identity \eqref{eq:action-on-L(U)} would only make sense in $\mathcal{L}(\mathfrak{U}\cap \tau \mathfrak{U}\cap \cdots \cap \tau^s\mathfrak{U})^d$.
\end{Remark}

This endows $\mathcal{L}(\mathfrak{U})^d$ with an $A\otimes A$-module structure. The next facts are immediate:
\begin{Lemma}
Let $\mathfrak{U}$ be a costable admissible open. Then, there exists a unique $A\otimes A$-module structure on $\mathcal{E}_\kappa(\mathcal{L})(\mathfrak{U})$ making \eqref{eq:pushout-def} a diagram of $A\otimes A$-modules with respects to the structure defined above.
If $\mathfrak{U}\to \mathfrak{V}$ is a morphism in $\operatorname{Cat}^\tau\mathfrak{X}$, the morphism \eqref{eq:restriction-map} is a morphism of $A\otimes A$-modules. 
Further, $\mathcal{E}_\kappa(\mathcal{L})$ is a sheaf of $A\otimes A$-modules on $\,^\tau\underline{\mathfrak{X}}$.
\end{Lemma}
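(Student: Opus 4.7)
The proof strategy reduces the three claims to functorial properties of the isomorphism $m_\kappa(\mathfrak{U})\colon\mathcal{E}_\kappa(\mathcal{L})(\mathfrak{U})\stackrel{\sim}{\to} \mathcal{L}(\mathfrak{U})^d$, together with one crucial compatibility check. For the first claim, uniqueness is immediate: any $A\otimes A$-module structure on $\mathcal{E}_\kappa(\mathcal{L})(\mathfrak{U})$ rendering the lower horizontal arrow of \eqref{eq:pushout-def} a morphism of $A\otimes A$-modules must coincide with the one obtained by transporting the structure on $\mathcal{L}(\mathfrak{U})^d$ through $m_\kappa(\mathfrak{U})$, since the latter is an isomorphism. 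For existence, I equip $\mathcal{E}_\kappa(\mathcal{L})(\mathfrak{U})$ with exactly this transported structure. The top arrow $\kappa^{-1}$ is $A\otimes A$-linear by construction, since the structure on $(A\otimes \CI)^d$ was defined to make $\kappa^{-1}$ intertwine with the natural structure on $A\otimes E(\CI)$; once the left vertical inclusion is shown to be $A\otimes A$-linear, the right vertical map $\iota_\kappa(\mathcal{L})(\mathfrak{U})$ is then automatically so.

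The central verification is therefore that the inclusion $(A\otimes \CI)^d\hookrightarrow \mathcal{L}(\mathfrak{U})^d$ is $A\otimes A$-linear. Multiplication by $a\otimes 1$ commutes with any sub-$(A\otimes \CI)$-module inclusion and is preserved tautologically. For $1\otimes a$, both sides carry the action described by the matrix polynomial $\varphi_\kappa(a)=A_0+A_1\tau+\cdots+A_s\tau^s$; the only subtlety is that on the left side, powers of $\tau$ act as the algebraic $q$-Frobenius on $\CI$ extended $A$-linearly, while on the right they act via the geometric twist $f\mapsto f^{(1)}$ of Definition \ref{def:costable}. These two actions agree on the subring $A\otimes \CI\subset \cO_\mathfrak{X}(\mathfrak{U})\subset \mathcal{L}(\mathfrak{U})$, since both are characterized by raising $\CI$-coefficients to the $q$-th power while fixing $A$. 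This is precisely where the costability of $\mathfrak{U}$ and of $\mathcal{L}$ is needed: without it, the expression $\varphi_\kappa(a)(F)$ on the right-hand side would only lie in $\mathcal{L}(\mathfrak{U}\cap\tau\mathfrak{U}\cap\cdots\cap\tau^s\mathfrak{U})^d$, and $\tau$-equivariance of the inclusion would not make sense.

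For the second claim, the restriction map $\mathcal{L}(\mathfrak{V})^d\to \mathcal{L}(\mathfrak{U})^d$ is $A\otimes A$-linear because the twist $f\mapsto f^{(1)}$ is functorial in the costable open (it factors through $\tau^{-1}\cM_\mathfrak{X}$ and the restriction there), and the isomorphism $m_\kappa$ is natural in $\mathfrak{U}$, so the induced map \eqref{eq:restriction-map} transports $A\otimes A$-linearity. Finally, the sheaf property on $\,^\tau\underline{\mathfrak{X}}$ uses that the intersection of two costable admissible opens is again costable: as $\tau$ is a bijection preserving inclusions, $\mathfrak{U}_i\cap\mathfrak{U}_j\subseteq \tau\mathfrak{U}_i\cap\tau\mathfrak{U}_j=\tau(\mathfrak{U}_i\cap \mathfrak{U}_j)$. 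Thus every open and pairwise intersection arising in a costable cover carries the prescribed $A\otimes A$-structure, with $A\otimes A$-linear transition maps, so the sheaf axiom on $\,^\tau\underline{\mathfrak{X}}$ follows immediately from that of $\mathcal{L}^d$ on $\underline{\mathfrak{X}}$ via the natural isomorphism $m_\kappa$. I expect the only non-routine piece to be the $\tau$-equivariance of $A\otimes \CI\hookrightarrow \mathcal{L}(\mathfrak{U})$ discussed above; everything else is formal.
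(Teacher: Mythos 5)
Your proof is correct, and it is essentially the argument the paper has in mind: the paper states these facts without proof (``The next facts are immediate''), and the natural justification is exactly yours, namely transporting the $A\otimes A$-structure along the isomorphism $m_\kappa(\mathfrak{U})$, with the only substantive check being that the algebraic twist $\id_A\otimes\Frob_q$ on $A\otimes\CI$ agrees with the geometric twist $f\mapsto f^{(1)}$ under the inclusion $A\otimes\CI\hookrightarrow\cL(\mathfrak{U})$, plus the observation that costable opens are stable under intersection so the sheaf axiom transfers from $\cL^d$ (cf.\ Lemma \ref{lem:Ekappa-sheaf}). Your identification of where costability of $\mathfrak{U}$ and of $\cL$ enters matches the paper's Remark following \eqref{eq:action-on-L(U)}.
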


We now wish to make the previous notion $\kappa$-free, and construct $\mathcal{E}_{\kappa}(\mathcal{L})(\mathfrak{U})$ independently of a choice of coordinates. Let $\kappa'$ be another choice of coordinates, and let 
\[
M=M_0+M_1\tau+...+M_s\tau^s \in \GL_d(\CI\{\tau\}), \quad M_i\in \GL_d(\CI),
\]
be the matrix obtained from $\kappa'\circ \kappa^{-1}$ in $\Aut_{\bF\text{-vs}/\CI}(\bG_a^d)\cong \GL_d(\CI\{\tau\})$. Let $\mathfrak{U}$ be an admissible open. By costability of $\mathcal{L}$, the morphism obtained from $M$ maps $\cL(\mathfrak{U})^d$ to $\cL(\mathfrak{U}\cap \tau \mathfrak{U}\cap \cdots \cap \tau^s\mathfrak{U})^d$. In particular, the universal property of the pushout produces a morphism
\begin{equation}\label{eq:comparison-E}
\cE_{\kappa}(\mathcal{L})(\mathfrak{U})\longrightarrow \cE_{\kappa'}(\mathcal{L})(\mathfrak{U}\cap \tau \mathfrak{U}\cap \cdots \cap \tau^s\mathfrak{U})
\end{equation}
via the commutative diagram of $A\otimes A$-modules:
\begin{equation}
\begin{tikzcd}
(A\otimes \CI)^d \arrow[rr,"\kappa^{-1}~~","\sim"']\arrow[dd,hookrightarrow]\arrow[dr,"M"] & & A\otimes E(\CI) \arrow[dd]\arrow[dr,equal] & \\
& (A\otimes \CI)^d \arrow[rr,crossing over,near start,"(\kappa')^{-1}","\sim"'] & & A\otimes E(\CI) \arrow[dd] \\
\cL(\mathfrak{U})^d \arrow[rr]\arrow[dr,"M"] & & \cE_{\kappa}(\mathcal{L})(\mathfrak{U})\arrow[dr,dashed] & \\
& \cL(\mathfrak{U}\cap \tau \mathfrak{U}\cap \cdots \cap \tau^s\mathfrak{U})^d\arrow[rr]\arrow[from=uu,hookrightarrow,crossing over] & & \cE_{\kappa'}(\mathcal{L})(\mathfrak{U}\cap \tau \mathfrak{U}\cap \cdots \cap \tau^s\mathfrak{U}) 
\end{tikzcd} \nonumber
\end{equation}
If $\mathfrak{U}$ is costable, \eqref{eq:comparison-E} becomes an isomorphism 
\begin{equation}\label{eq:comparison-E-costable}
t_{\kappa,\kappa'}:\cE_{\kappa}(\mathcal{L})(\mathfrak{U})\stackrel{\sim}{\longrightarrow} \cE_{\kappa'}(\mathcal{L})(\mathfrak{U}).
\end{equation}
\begin{Definition}
Let $\mathfrak{U}\in \operatorname{Cat}^\tau\mathfrak{X}$, $(\mathfrak{U}_i\to \mathfrak{U})_{i\in I}\in \operatorname{Cov}^\tau\mathfrak{X}$ an admissible covering.
\begin{enumerate}
\item We define $\mathcal{E}(\mathcal{L})(\mathfrak{U})$ as the $A\otimes A$-module given by the inverse limit
\[
\mathcal{E}(\mathcal{L})(\mathfrak{U}) =\varprojlim \cE_{\kappa}(\mathcal{L})(\mathfrak{U})
\]
taken over the projective system of choices of coordinates $\kappa$, the transition maps being $t_{\kappa,\kappa'}$ as in \eqref{eq:comparison-E-costable}.
\item We denote by $\varepsilon_{\kappa}(\mathcal{L})(\mathfrak{U}):\mathcal{E}(\mathcal{L})(\mathfrak{U}) \to \mathcal{E}_\kappa(\mathcal{L})(\mathfrak{U})$ the isomorphism given by universality of the inverse limit.
\item We let $\iota(\mathfrak{U}):A\otimes E(\CI)\to \mathcal{E}(\mathcal{L})(\mathfrak{U})$ be the composition 
\[
A\otimes E(\CI) \stackrel{\iota_{\kappa}(\mathfrak{U})}{\longrightarrow} \mathcal{E}_{\kappa}(\mathcal{L})(\mathfrak{U}) \stackrel{\varepsilon_\kappa(\mathfrak{U})^{-1}}{\longrightarrow} \mathcal{E}(\mathcal{L})(\mathfrak{U}).
\]
\end{enumerate}
\end{Definition}
\begin{Remark}
Note that $\iota$ is independent of the choice of $\kappa$, as for another choice of $\kappa'$, the diagram of $A\otimes A$-modules
\begin{equation}
\begin{tikzcd}[column sep=3em]
A\otimes E(\CI) \arrow[r,"\iota_\kappa(\mathfrak{U})"]\arrow[dr,"\iota_{\kappa'}(\mathfrak{U})"'] & \mathcal{E}_{\kappa}(\mathcal{L})(\mathfrak{U})\arrow[d,"t_{\kappa,\kappa'}"] & \mathcal{E}(\mathcal{L})(\mathfrak{U}) \arrow[l,"\varepsilon_\kappa(\mathfrak{U})"']\arrow[dl,"\varepsilon_{\kappa'}(\mathfrak{U})"] \\
& \mathcal{E}_{\kappa'}(\mathcal{L})(\mathfrak{U}) & 
\end{tikzcd}
\nonumber
\end{equation}
commutes.
\end{Remark}

Proposition \ref{prop:intro-E-valuedfunctions} of the Introduction is then an easy consequence of the definitions and the following fact:
\begin{Proposition}\label{prop:E-sheaf}
$\mathcal{E}(\mathcal{L})$ is a sheaf of $A\otimes A$-modules on $\,^\tau\underline{\mathfrak{X}}$.
\end{Proposition}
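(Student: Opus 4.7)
The plan is to deduce Proposition \ref{prop:E-sheaf} formally from the previously-established fact that each $\mathcal{E}_\kappa(\mathcal{L})$ is a sheaf of $A\otimes A$-modules on $\,^\tau\underline{\mathfrak{X}}$. Indeed, by construction $\mathcal{E}(\mathcal{L})(\mathfrak{U}) = \varprojlim_\kappa \mathcal{E}_\kappa(\mathcal{L})(\mathfrak{U})$ is a pointwise inverse limit, and the standard fact that a pointwise limit of sheaves is a sheaf will conclude the proof, provided one first upgrades $\mathcal{E}(\mathcal{L})$ to a well-defined presheaf on the costable site.

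First I would verify the presheaf condition: the restriction maps $\rho_\kappa(\mathfrak{U}\to\mathfrak{V})$ of the various $\mathcal{E}_\kappa(\mathcal{L})$ must be compatible across $\kappa$, in the sense that for any inclusion $\mathfrak{U}\to \mathfrak{V}$ in $\operatorname{Cat}^\tau \mathfrak{X}$ and any two choices $\kappa, \kappa'$, the square
\[
\begin{tikzcd}
\mathcal{E}_\kappa(\mathcal{L})(\mathfrak{V}) \arrow[r,"t_{\kappa,\kappa'}"]\arrow[d,"\rho_\kappa"'] & \mathcal{E}_{\kappa'}(\mathcal{L})(\mathfrak{V})\arrow[d,"\rho_{\kappa'}"] \\
\mathcal{E}_\kappa(\mathcal{L})(\mathfrak{U}) \arrow[r,"t_{\kappa,\kappa'}"'] & \mathcal{E}_{\kappa'}(\mathcal{L})(\mathfrak{U})
\end{tikzcd}
\]
commutes. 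This reduces to a diagram chase through the four pushout presentations defining the corners: both compositions are the unique $A\otimes A$-linear map extending the outer cone obtained by combining $\iota_{\kappa'}(\mathfrak{U})$ with the action of $M=\kappa'\circ\kappa^{-1}\in \GL_d(\CI\{\tau\})$ on $\mathcal{L}(\mathfrak{V})^d\to \mathcal{L}(\mathfrak{U})^d$, the key input being that the restriction maps of $\mathcal{L}^d$ commute with this action. With this compatibility established, restriction maps on $\mathcal{E}(\mathcal{L})$ are defined levelwise, and the transition isomorphisms $\varepsilon_\kappa(\mathcal{L})$ are morphisms of presheaves.

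For the sheaf condition, I would fix a covering $(\mathfrak{U}_i\to \mathfrak{U})_{i\in I}$ in $\operatorname{Cov}^\tau\mathfrak{X}$. First observe that each intersection $\mathfrak{U}_i\cap\mathfrak{U}_j$ is again costable: since $\tau$ is an automorphism of $\mathfrak{X}$ and $\mathfrak{U}_i\subseteq \tau\mathfrak{U}_i$, $\mathfrak{U}_j\subseteq \tau\mathfrak{U}_j$, we get $\mathfrak{U}_i\cap\mathfrak{U}_j\subseteq \tau\mathfrak{U}_i\cap\tau\mathfrak{U}_j=\tau(\mathfrak{U}_i\cap\mathfrak{U}_j)$. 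For each $\kappa$, the sheaf axiom for $\mathcal{E}_\kappa(\mathcal{L})$ gives an equalizer
\[
\mathcal{E}_\kappa(\mathcal{L})(\mathfrak{U}) \longrightarrow \prod_i \mathcal{E}_\kappa(\mathcal{L})(\mathfrak{U}_i) \rightrightarrows \prod_{i,j}\mathcal{E}_\kappa(\mathcal{L})(\mathfrak{U}_i\cap \mathfrak{U}_j).
\]
Applying the functor $\varprojlim_\kappa$, which commutes with products and equalizers, transports this equalizer to the analogous diagram for $\mathcal{E}(\mathcal{L})$, which is exactly what the sheaf axiom demands. The $A\otimes A$-module structure survives because each $t_{\kappa,\kappa'}$ has already been shown to be $A\otimes A$-linear, so the inverse limit inherits one and all structure maps are $A\otimes A$-equivariant.

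The only genuine obstacle is the naturality of the $t_{\kappa,\kappa'}$ in $\mathfrak{U}$; once this diagram chase is settled, everything else is a formal consequence of the commutation of inverse limits with the sheaf axiom's equalizer description.
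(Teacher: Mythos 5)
Your proof is correct and takes essentially the same route as the paper, which reduces the sheaf axiom for $\mathcal{E}(\mathcal{L})$ to Lemma \ref{lem:Ekappa-sheaf} by fixing a single $\kappa$ and identifying the two equalizer sequences via $\varepsilon_\kappa$, whereas you phrase the same reduction as commuting the inverse limit over all $\kappa$ with the equalizer. The additional checks you include --- naturality of $t_{\kappa,\kappa'}$ in $\mathfrak{U}$ and costability of the intersections $\mathfrak{U}_i\cap\mathfrak{U}_j$ --- are correct and merely make explicit details the paper leaves implicit.
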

\begin{proof}
Let $(\mathfrak{U}_i\to\mathfrak{U})_{i\in I}\in \operatorname{Cov}^{\tau}\mathfrak{X}$. We need to verify that the sequence
\[
\mathcal{E}(\mathcal{L})(\mathfrak{U})\to \prod_{i\in I}{\mathcal{E}(\mathcal{L})(\mathfrak{U}_i)} \rightrightarrows \prod_{i,j\in I}{\mathcal{E}(\mathcal{L})(\mathfrak{U}_i\times_{\mathfrak{U}}\mathfrak{U}_j)}
\]
is exact. But, given $\kappa$, the above is canonically isomorphic to
\[
\mathcal{E}_\kappa(\mathcal{L})(\mathfrak{U})\to \prod_{i\in I}{\mathcal{E}_\kappa(\mathcal{L})(\mathfrak{U}_i)} \rightrightarrows \prod_{i,j\in I}{\mathcal{E}_\kappa(\mathcal{L})(\mathfrak{U}_i\times_{\mathfrak{U}}\mathfrak{U}_j)}
\]
which is exact by Lemma \ref{lem:Ekappa-sheaf}.
\end{proof}

As one verifies, this definition is consistent with the definition of $A\hat{\otimes}E(\CI)$ present in \cite{gazda-maurischat}. Recall that $\mathfrak{D}=\Sp \CI \langle A \rangle$. The relevant statement is the following:
\begin{Lemma}\label{lem:E-on-disk}
The admissible open $\mathfrak{D}\subset \mathfrak{X}$ is costable. Further, the morphism 
\[
\iota(\mathfrak{D}):A\otimes E(\CI)\to \mathcal{E}(\cL)(\mathfrak{D})
\]
factors uniquely through the completion map $A\otimes E(\CI)\to A\hat{\otimes} E(\CI)$. The image of the latter is $\mathcal{E}(\cO_{\mathfrak{X}})(\mathfrak{D})$. 
\end{Lemma}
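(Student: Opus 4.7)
The plan is to reduce the coordinate-free statement to a concrete computation in a fixed coordinate system, and then to invoke the universal property of metric completion to obtain the factorization.

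First, I would dispatch the costability of $\mathfrak{D}$. This is exactly the boundary case $\rho=1$ of the example preceding the lemma; alternatively, the isomorphism $\tau:\mathfrak{X}_{\rho^{1/q}}\stackrel{\sim}{\to}\mathfrak{X}_\rho$ of \eqref{eq:tau-on-rho} specialized to $\rho=1$ gives $\tau\mathfrak{D}=\mathfrak{D}$, whence $\mathfrak{D}\subseteq\tau\mathfrak{D}$ trivially.

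Next, I would fix a choice of coordinates $\kappa:E\stackrel{\sim}{\to}\bG_a^d$ and transport the picture through the isomorphism $m_\kappa(\mathfrak{D})\circ\varepsilon_\kappa(\mathfrak{D}):\mathcal{E}(\mathcal{L})(\mathfrak{D})\stackrel{\sim}{\to}\mathcal{L}(\mathfrak{D})^d$. Under this identification and the isomorphism $A\otimes E(\CI)\cong (A\otimes\CI)^d$ supplied by $\kappa$, the morphism $\iota(\mathfrak{D})$ becomes the natural composition
\[
(A\otimes\CI)^d\;\hookrightarrow\;\cO_{\mathfrak{X}}(\mathfrak{D})^d=\CI\langle A\rangle^d\;\hookrightarrow\;\mathcal{L}(\mathfrak{D})^d,
\]
where the first arrow is the coordinate-wise completion with respect to the Gauss norm $\|\cdot\|_1$. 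By construction recalled in the introduction, the metric on $A\otimes E(\CI)$ defining $A\hat{\otimes}E(\CI)$ is pulled back from this Gauss norm via $\kappa$ and is independent of the choice of $\kappa$, so that $A\hat{\otimes}E(\CI)\cong\CI\langle A\rangle^d$ under $\kappa$. The universal property of the metric completion then yields the desired factorization $A\otimes E(\CI)\to A\hat{\otimes}E(\CI)\to\mathcal{E}(\mathcal{L})(\mathfrak{D})$, and uniqueness follows from density of $A\otimes E(\CI)$ in $A\hat{\otimes}E(\CI)$ together with Hausdorffness of $\CI\langle A\rangle^d$ inside $\mathcal{L}(\mathfrak{D})^d$. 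The image of this extended map is exactly $\CI\langle A\rangle^d=\cO_{\mathfrak{X}}(\mathfrak{D})^d$, which under the analogous identification $\mathcal{E}(\cO_\mathfrak{X})(\mathfrak{D})\stackrel{\sim}{\to}\cO_{\mathfrak{X}}(\mathfrak{D})^d$ corresponds to $\mathcal{E}(\cO_\mathfrak{X})(\mathfrak{D})\subseteq\mathcal{E}(\mathcal{L})(\mathfrak{D})$.

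The main obstacle to watch for is that the arguments above are cast in a specific $\kappa$, while the conclusion must be coordinate-free. This is reconciled by observing that both $\iota(\mathfrak{D})$ and the completion map $A\otimes E(\CI)\to A\hat{\otimes}E(\CI)$ are defined intrinsically, and that the transition isomorphisms $t_{\kappa,\kappa'}$ of \eqref{eq:comparison-E-costable} restrict to the identity on the holomorphic part $\CI\langle A\rangle^d$ (since the matrix $M\in\GL_d(\CI\{\tau\})$ implementing the change of coordinates has entries in $\CI\{\tau\}\subset \CI\langle A\rangle\{\tau\}$ that preserve $\cO_\mathfrak{X}(\mathfrak{D})^d$ on the costable open $\mathfrak{D}$). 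Hence the factorization and the identification of the image descend to the inverse limit defining $\mathcal{E}(\mathcal{L})(\mathfrak{D})$, completing the argument.
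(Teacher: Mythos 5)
Your overall route is the paper's: costability of $\mathfrak{D}$ is the case $\rho=1$ of the example (the paper's proof just remarks that $\tau\fm$ stays maximal in $\CI\langle A\rangle$), and the factorization is obtained exactly as in the paper by transporting through a fixed $\kappa$, recognizing that $A\hat{\otimes}E(\CI)$ sits in the pushout square $(A\otimes\CI)^d\hookrightarrow\CI\langle A\rangle^d$ over $\kappa^{-1}$, so that $\iota_\kappa(\mathfrak{D})$ extends to a map $c_\kappa:A\hat{\otimes}E(\CI)\to\cE_\kappa(\cL)(\mathfrak{D})$ with image the holomorphic part, and then passing to the inverse limit over $\kappa$. (On uniqueness: your density-plus-Hausdorffness argument tacitly assumes the factorizing map is continuous; the paper instead pins the map down by the universal property of the pushout. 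Either reading is acceptable at the paper's level of detail.)

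One claim in your last paragraph is false as written and should be repaired: the transition isomorphisms $t_{\kappa,\kappa'}$ do \emph{not} restrict to the identity on $\CI\langle A\rangle^d$. They act through the matrix $M=M_0+M_1\tau+\dots\in\GL_d(\CI\{\tau\})$, which involves nontrivial twists and coefficients, so it is genuinely different from the identity whenever $\kappa'\neq\kappa$. What you actually need (and what your parenthetical correctly asserts) is weaker: since $\mathfrak{D}$ and $\cO_\mathfrak{X}$ are costable, $M$ \emph{preserves} $\cO_\mathfrak{X}(\mathfrak{D})^d=\CI\langle A\rangle^d$, so $t_{\kappa,\kappa'}$ carries $\cE_\kappa(\cO_\mathfrak{X})(\mathfrak{D})$ into $\cE_{\kappa'}(\cO_\mathfrak{X})(\mathfrak{D})$; and in addition the extended maps are intertwined, $t_{\kappa,\kappa'}\circ c_\kappa=c_{\kappa'}$. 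The latter does not follow from preservation alone, but it does follow by checking on the image of $A\otimes E(\CI)$, where it is exactly the compatibility $t_{\kappa,\kappa'}\circ\iota_\kappa(\mathfrak{D})=\iota_{\kappa'}(\mathfrak{D})$ recorded in the Remark after the definition of $\cE(\cL)$, together with continuity of $M$ on $\CI\langle A\rangle^d$ (or, again, the universal property of the pushout defining $A\hat{\otimes}E(\CI)$). With that one-line fix your descent to $\varprojlim_\kappa\cE_\kappa(\cL)(\mathfrak{D})=\cE(\cL)(\mathfrak{D})$ goes through and agrees with the paper's proof.
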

\begin{proof}
That $\Sp \CI\langle A \rangle$ is costable only means that $\tau \fm$ is again maximal in $\CI\langle A \rangle$ for any maximal ideal $\fm\subset \CI\langle A \rangle$. Now, recall that the choice of $\kappa$ gives rise to a metric on $A\otimes E(\CI)$ which is independent of $\kappa$, and that $A\hat{\otimes} E(\CI)$ is the completion associated to the latter metric. We obtain by definition a pushout square of $A\otimes A$-modules:
\begin{equation}
    \begin{tikzcd}
    (A\otimes \CI)^d \arrow[r,"\kappa^{-1}","\sim"']\arrow[d,hookrightarrow] & A\otimes E(\CI) \arrow[d] \\
    \CI\langle A \rangle^d \arrow[r,"\sim"] & A\hat{\otimes} E(\CI)
    \end{tikzcd}
\end{equation}
As $\cO_{\mathfrak{X}}(\mathfrak{D})=\CI\langle A \rangle$, we obtain a unique map $c_\kappa:A\hat{\otimes} E(\CI)\to \mathcal{E}_\kappa(\mathcal{L})(\mathfrak{D})$, and this map is an isomorphism. Now, one verifies that
\[
A\hat{\otimes} E(\CI) \xrightarrow{(c_\kappa)_\kappa}\varprojlim_{\kappa} \mathcal{E}_\kappa(\cO_\mathfrak{X})(\mathfrak{D})=\mathcal{E}(\mathcal{O}_\mathfrak{X})(\mathfrak{D})
\]
is a well-defined isomorphism of $A\otimes A$-modules. 
\end{proof} 

\subsection*{Special functions as $E(\CI)$-valued functions}
We are now in position to introduce the object of study. 

In \cite[Def. 3.1]{gazda-maurischat}, we defined the $A$-module of special functions of $E$ as
\[ \fsf(E)= \left\{ \omega \in A\hat{\otimes} E(\CI) \,\,\middle|\,\,
\forall a\in A: (a\otimes 1)\omega = (1\otimes a)\omega \right\}. \]
According to Lemma \ref{lem:E-on-disk}, the $A$-module $\fsf(E)$ admits the following generalization:
\begin{Definition}\label{def:sf}
Let $\mathcal{L}$ be a costable effective fractional ideal sheaf on $\fX$, and
let $\mathfrak{U}$ be an object of $\operatorname{Cat}^{\tau}\mathfrak{X}$. The \emph{module of special functions of $\mathcal{E}(\mathcal{L})$ on $\mathfrak{U}$} is the $A$-module
\[
\fsf_E(\mathcal{L})(\mathfrak{U})=\left\{\omega \in \mathcal{E}(\mathcal{L})(\mathfrak{U})\,\,\middle|\,\, \forall a\in A:~(a\otimes 1) \omega=(1\otimes a)\omega \right\}.
\]
\end{Definition}

In particular as $\mathcal{E}(\mathcal{O}_\mathfrak{X})(\mathfrak{D})=A\hat{\otimes}E(\CI)$, we have  $\fsf(E)=\fsf_E(\cO_\mathfrak{X})(\mathfrak{D})$ as $A$-modules.\\ We have the following:
\begin{Proposition}\label{prop:sf-sheaf}
$\fsf_E(\mathcal{L})$  is a sheaf of $A$-modules on $\,^\tau\underline{\mathfrak{X}}$.
\end{Proposition}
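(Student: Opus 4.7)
The plan is to reduce the sheaf property of $\fsf_E(\mathcal{L})$ to the already-established sheaf property of $\mathcal{E}(\mathcal{L})$ (Proposition~\ref{prop:E-sheaf}), exploiting that the special-function condition is cut out by the family of $A\otimes A$-linear equations $(a\otimes 1 - 1\otimes a)\omega = 0$, indexed by $a\in A$. No new ideas beyond those already developed should be required; the proof is essentially formal, saying that $\fsf_E(\mathcal{L})$ is an equalizer of sheaves.

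First, I would check that $\fsf_E(\mathcal{L})$ is well-defined as a sub-presheaf of $\mathcal{E}(\mathcal{L})$ on $\,^\tau\underline{\mathfrak{X}}$. The restriction maps of $\mathcal{E}(\mathcal{L})$ on the costable site were shown in the previous subsection to be morphisms of $A\otimes A$-modules; consequently, if $\omega \in \mathcal{E}(\mathcal{L})(\mathfrak{V})$ satisfies $(a\otimes 1)\omega = (1\otimes a)\omega$, then its restriction to any $\mathfrak{U} \hookrightarrow \mathfrak{V}$ in $\operatorname{Cat}^\tau\mathfrak{X}$ satisfies the same identity. This gives $\fsf_E(\mathcal{L})$ the structure of a presheaf of $A$-modules.

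Next, given a costable admissible covering $(\mathfrak{U}_i \to \mathfrak{U})_{i\in I} \in \operatorname{Cov}^\tau\mathfrak{X}$, I would verify the two sheaf axioms. Separatedness is immediate: if $\omega\in \fsf_E(\mathcal{L})(\mathfrak{U})$ restricts to zero on each $\mathfrak{U}_i$, then $\omega = 0$ in the ambient $\mathcal{E}(\mathcal{L})(\mathfrak{U})$ by Proposition~\ref{prop:E-sheaf}. For gluing, suppose we are given $\omega_i \in \fsf_E(\mathcal{L})(\mathfrak{U}_i)$ agreeing on every $\mathfrak{U}_i \times_{\mathfrak{U}} \mathfrak{U}_j$. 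Proposition~\ref{prop:E-sheaf} produces a unique $\omega\in \mathcal{E}(\mathcal{L})(\mathfrak{U})$ with $\omega|_{\mathfrak{U}_i} = \omega_i$; the only thing to show is that $\omega$ itself lies in $\fsf_E(\mathcal{L})(\mathfrak{U})$. For each $a\in A$, set $\eta := (a\otimes 1)\omega - (1\otimes a)\omega \in \mathcal{E}(\mathcal{L})(\mathfrak{U})$. Since restriction is $A\otimes A$-linear, $\eta|_{\mathfrak{U}_i} = (a\otimes 1)\omega_i - (1\otimes a)\omega_i = 0$; separatedness of $\mathcal{E}(\mathcal{L})$ then forces $\eta = 0$.

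The main (and essentially sole) obstacle is the $A\otimes A$-linearity of the restriction maps in $\mathcal{E}(\mathcal{L})$ on the costable site, which is what allows the defining identity of $\fsf_E$ to commute with restriction and gluing; but this is already built into the construction of $\mathcal{E}(\mathcal{L})$ in the previous subsection. Once that input is invoked, no further calculation is required.
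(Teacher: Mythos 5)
Your proof is correct and follows the same route as the paper, which simply notes that the statement follows immediately from Definition~\ref{def:sf} and the fact that $\mathcal{E}(\mathcal{L})$ is a sheaf on $\,^\tau\underline{\mathfrak{X}}$ (Proposition~\ref{prop:E-sheaf}). Your write-up just spells out the standard equalizer/kernel argument, correctly invoking the $A\otimes A$-linearity of the restriction maps on the costable site.
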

\begin{proof}
This follows immediately from Definition \ref{def:sf} and the fact that $\cE$ is a sheaf on $\,^\tau\underline{\mathfrak{X}}$.
\end{proof}

\section{Continuation and residues of special functions}\label{sec:continuation-residue}
\subsection*{Residue at $\fj$}
\subsubsection*{Residue map for meromorphic functions on $\mathfrak{X}$}
We begin with a brief recall of residues for meromorphic functions on $\mathfrak{X}$. Recall that we have a morphism of locally ringed spaces $f:\mathfrak{X}\to X$ given by the GAGA functor, and that any sheaf $\mathcal{F}$ on $X$ induces a sheaf $\cF^{\rig}=f^*\cF$ on $\mathfrak{X}$. The \emph{sheaf of holomorphic differentials}, denoted $\Omega^{\operatorname{hol}}_{\mathfrak{X}/\CI}$, is defined as 
\[
\Omega^{\operatorname{hol}}_{\mathfrak{X}/\CI}:=\left(\Omega_{X/\CI}^1\right)^{\rig}=\mathcal{O}_\mathfrak{X}\otimes_{\cO_X}\Omega^1_{\cO_X/\CI}.
\]
\begin{Remark}
In \cite[\S 3.6]{FVDP}, $\Omega^{\operatorname{hol}}_{\mathfrak{X}/\CI}$ is rather called the \emph{universal finite differential module} and denoted by  $\Omega^{\operatorname{f}}_{\mathfrak{X}/\CI}$.
\end{Remark}

Similarly, one defines the \emph{sheaf of meromorphic differentials}, denoted $\Omega^{\operatorname{mer}}_{\mathfrak{X}/\CI}$,  as 
\[
\Omega^{\operatorname{mer}}_{\mathfrak{X}/\CI}:=\cM_\mathfrak{X}\otimes_{\cO_\mathfrak{X}}\Omega^{\operatorname{hol}}_{\mathfrak{X}/\CI}=\mathcal{M}_\mathfrak{X}\otimes_{\cO_X}\Omega^1_{\cO_X/\CI}.
\]
Identifying $\Omega^{\operatorname{mer}}_{\mathfrak{X}/\CI}$ with $\cM_\mathfrak{X}\otimes_A \Omega_{A/\bF}^1$, we recognize that there is a map
\[
\tau:\Omega^{\operatorname{mer}}_{\mathfrak{X}/\CI}\longrightarrow \tau^{-1}\Omega^{\operatorname{mer}}_{\mathfrak{X}/\CI}
\]
given as $\tau\otimes_A \id_{\Omega_{A}}$ on $\cM_\mathfrak{X}\otimes_A \Omega_{A/\bF}^1$. Therefore, if $\mathfrak{U}$ is a costable admissible open of $\mathfrak{X}$, we obtain a morphism $\tau:\Omega^{\operatorname{mer}}_{\mathfrak{X}/\CI}(\mathfrak{U})\to \Omega^{\operatorname{mer}}_{\mathfrak{X}/\CI}(\mathfrak{U})$. \\

Given $x\in |\mathfrak{X}|=|X|$, we hence have a residue map
\[
\operatorname{residue}_x:(\Omega^{\operatorname{mer}}_{\mathfrak{X}/\CI})_x\cong (\Omega_{X/\CI}^1)_x\longrightarrow \CI, \quad \left(\sum_{i\geq -h}{a_i \pi_x^i}\right)d\pi_x\longmapsto a_{-1}
\]
obtained from the classical algebraic residue map, and independent on the choice of a uniformizer $\pi_x\in \cO_{\mathfrak{X},x}$.

\subsubsection*{Residue of $E(\CI)$-valued functions at $\fj$}
Now that we have constructed a sheaf of $E(\CI)$-valued meromorphic functions, let us discuss in which sense one might take the residue at $\fj\in \mathfrak{X}$. We denote by $\mathcal{J}$ the subsheaf of $\cM_\mathfrak{X}$ of functions that are meromorphic on $\mathfrak{X}$, yet holomorphic away from the support of $J$; namely, 
\[
\mathcal{J}:=\operatorname{colim}_n \cO_\mathfrak{X}(n\cdot J).
\]
As all sheaves $\cO_\mathfrak{X}(n\cdot J)$ are costable, so is $\cJ$.
\begin{Remark}
Note that $\mathcal{J}(\mathfrak{U})$ does generally not correspond to $\bigcup_n \mathcal{O}_{\mathfrak{X}}(n\cdot J)(\mathfrak{U})$ for an arbitrary admissible open $\mathfrak{U}$: the meromorphic functions with only poles the points $\tau^n\mathfrak{j}$ with order $n$, for all positive $n$, will typically belong to $\mathcal{J}(\mathfrak{X})$ but not to $\bigcup_n \mathcal{O}_{\mathfrak{X}}(n\cdot J)(\mathfrak{X})$.
\end{Remark}

Let $\kappa:E\stackrel{\sim}{\to}\bG_a^d$ be a choice of coordinates for $E$. It induces a map of $\CI$-vector spaces 
\[
\Lie_\kappa:=\Lie_{\kappa}(\CI):\Lie_E(\CI)\longrightarrow \Lie_{\bG_a^d}(\CI)=\CI^d.
\]
\begin{Definition}
We define $\res_\fj$ as the unique map of $A$-modules making the following diagram commute:
\begin{equation}
    \begin{tikzcd}
    \Omega^1_{A/\bF}\otimes_A\mathcal{E}(\mathcal{J})(\mathfrak{X})\arrow[d,"m_\kappa\circ \varepsilon_\kappa"',"\wr"]\arrow[r,dashed,"\res_\fj"] & \Lie_E(\CI) \arrow[d,"\wr"',"\Lie_\kappa"] \\
    \Omega_{A/\bF}^1\otimes_A\mathcal{J}(\mathfrak{X})^d \arrow[r,"\operatorname{residue}_{\fj}"] & \CI^d
    \end{tikzcd}
\end{equation}
\end{Definition}

As suggested by the notation, the next proposition proves the independence of $\res_\fj$ on the choice of coordinates.
\begin{Proposition}\label{prop:res-indep}
The map $\res_\fj$ is independent of $\kappa$. 
\end{Proposition}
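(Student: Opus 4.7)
The plan is to track the change-of-coordinates matrix $M = \kappa'\circ\kappa^{-1}\in\GL_d(\CI\{\tau\})$ through the three ingredients defining $\res_\fj$ and show that its effects on the residue and on the tangent linearization cancel. Write
\[
M = M_0 + M_1\tau + \cdots + M_s\tau^s,\qquad M_i\in\Mat_d(\CI),\ M_0\in\GL_d(\CI).
\]
Applied to the (costable) global open $\fX$, the construction of the transition isomorphism \eqref{eq:comparison-E-costable} yields, for every $f\in\cE(\cJ)(\fX)$, the identity
\[
F' := m_{\kappa'}(\varepsilon_{\kappa'}(f)) = M_0\,F + M_1\,F^{(1)} + \cdots + M_s\,F^{(s)} \in \cJ(\fX)^d,
\]
where $F := m_\kappa(\varepsilon_\kappa(f))$ and the right-hand side is the twisted action of \eqref{eq:action-on-L(U)}. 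On tangent spaces, since the $q$-Frobenius has vanishing derivative, one has $\partial(c\tau^i)=0$ for every $c\in\CI$ and every $i\ge 1$; hence $\partial M = M_0$ and $\Lie_{\kappa'} = M_0\circ\Lie_\kappa$.

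The crucial input is that only the $M_0F$ summand contributes to the residue at $\fj$. Indeed, the divisor $J = \fj + \tau\fj + \tau^2\fj + \cdots$ is the $\tau$-orbit of $\fj$, so any $G\in\cJ(\fX)$ has its poles confined to $\{\fj,\tau\fj,\tau^2\fj,\ldots\}$; its $i$-th twist $G^{(i)}$ then has poles only among $\{\tau^i\fj,\tau^{i+1}\fj,\ldots\}$, a set which omits $\fj$ whenever $i\ge 1$. Since every $\omega\in\Omega^1_{A/\bF}$ is holomorphic on all of $\Spec A\otimes \CI$, the meromorphic differential $\omega\cdot G^{(i)}$ is holomorphic at $\fj$ for $i\ge 1$, and so $\operatorname{residue}_\fj(\omega\cdot G^{(i)}) = 0$. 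Applied componentwise, this gives
\[
\operatorname{residue}_\fj(\omega\otimes F') = M_0\cdot\operatorname{residue}_\fj(\omega\otimes F).
\]

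Assembling: for any $\omega\otimes f\in\Omega^1_{A/\bF}\otimes_A\cE(\cJ)(\fX)$, one reads off
\[
\Lie_{\kappa'}^{-1}\operatorname{residue}_\fj(\omega\otimes F') = \Lie_\kappa^{-1}\,M_0^{-1}\cdot M_0\cdot\operatorname{residue}_\fj(\omega\otimes F) = \Lie_\kappa^{-1}\operatorname{residue}_\fj(\omega\otimes F),
\]
which is precisely the $\kappa$-definition of $\res_\fj(\omega\otimes f)$. I expect the only genuinely subtle step to be the identity $F' = M\cdot F$, which must hold \emph{globally} on $\fX$; this is where the costable framework developed in the previous sections pays off, as it ensures that each $F^{(i)}$ is a well-defined element of $\cJ(\fX)^d$ and that the formulas make sense over all of $\fX$ rather than only over shrinking intersections $\fX\cap\tau\fX\cap\cdots$. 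The rest of the argument reduces to two elementary facts: the vanishing of $\partial\tau$ in characteristic $p$, and the strict $\tau$-shift of the support of $J$ away from $\fj$.
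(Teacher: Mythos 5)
Your argument is correct and follows essentially the same route as the paper's proof: you identify the change of coordinates with a matrix $M=M_0+M_1\tau+\cdots\in\GL_d(\CI\{\tau\})$, observe that the twisted terms $F^{(i)}$ ($i\geq 1$) are regular at $\fj$ so that $\operatorname{residue}_\fj$ only sees $M_0$, and cancel this against $\Lie_{\kappa'}=M_0\circ\Lie_\kappa$. The paper packages the same computation as a commuting diagram, so there is nothing substantive to add.
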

\begin{proof}
First observe that if $f\in  \mathcal{J}(\mathfrak{X})$ then $f^{(1)}$ belongs to $\mathcal{J}(\mathfrak{X})^{(1)}$ which consists of functions that are meromorphic on $\mathfrak{X}$ and holomorphic on the admissible open $\mathfrak{X}\setminus\{\tau\fj,\tau^2\fj,...\}$, and $f^{(1)}$ is thus regular at $\fj$. In particular, given $F\in (\Omega_{A/\bF}^1\otimes_A\mathcal{J}(\mathfrak{X}))^d$ and a matrix $M=M_0+M_1\tau+...\in \GL_d(\CI\{\tau\})$ where $M_i\in \Mat_d(\CI)$ and $M_0\in \GL_d(\CI)$, we have 
\begin{align*}
\operatorname{residue}_{\fj}\left(M\cdot F\right) &=  \operatorname{residue}_{\fj}\left(M_0\cdot F+\underbrace{M_1\cdot F^{(1)}+M_2\cdot F^{(2)}+...}_{\text{regular~at~}\fj}\right)= \operatorname{residue}_{\fj}(M_0\cdot F) \\
 &=M_0\cdot \operatorname{residue}_{\fj}(F).
\end{align*}
Now, for another choice $\kappa'$ of coordinates for $E$, $\kappa'$ differs from $\kappa$ by an element in $\Aut_{\bF_q-\text{vs}/\CI}(\bG_a^d)$, and hence is represented by a matrix $M\in \GL_d(\CI\{\tau\})$. We hence have a diagram:
\begin{equation}
    \begin{tikzcd}
    & \Omega^1_{A/\bF}\otimes_A\mathcal{E}(\mathcal{J})(\mathfrak{X})\arrow[d,"m_\kappa\circ \varepsilon_\kappa"',"\wr"]\arrow[r,"\res_\fj"]\arrow[ddl,bend right,"m_{\kappa'}\circ \varepsilon_{\kappa'}"'] & \Lie_E(\CI) \arrow[d,"\wr"',"\Lie_\kappa"] \arrow[ddr,bend left,"\Lie _{\kappa'}"] &  \\
    & \Omega_{A/\bF}^1\otimes_A\mathcal{J}(\mathfrak{X})^d \arrow[r,"\operatorname{residue}_{\fj}"] \arrow[dl,"M"'] & \CI^d \arrow[dr,"M_0"] & \\
    \Omega_{A/\bF}^1\otimes_A\mathcal{J}(\mathfrak{X})^d \arrow[rrr,"\operatorname{residue}_{\fj}"]& & & \CI^d
    \end{tikzcd}\nonumber
\end{equation}
where all inner squares commute. In particular, the outer square commutes, which proves the claim.
\end{proof}

\subsection*{Proofs of the main results}
The next result, already announced in the introduction, translates the fact that special functions admit a unique meromorphic continuation to $\mathfrak{X}$, with poles at $\fj$ and its iterates of order at most $d=\dim E$. For a rigid analytic divisor $D$, we rather use the notation $\fsf_E(D)$ for $\fsf_E(\mathcal{O}_\mathfrak{X}(D))$.
\begin{Theorem}[Meromorphic continuation of special functions]\label{thm:continuation-sf}
The canonical $A$-module map $\fsf_E(d\cdot J)(\mathfrak{X})\to \fsf_E(d\cdot J)(\mathfrak{D})=\fsf_E(\cO_\mathfrak{X})(\mathfrak{D})=\fsf(E)$ is an isomorphism.
\end{Theorem}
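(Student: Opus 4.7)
The plan is to establish injectivity via the rigid analytic identity principle, then build a meromorphic extension of the given Tate-algebra special function $\omega$ by iterative use of the Anderson functional equation along a chosen separating element $u$, and finally to eliminate the spurious conjugate poles that this single-$u$ extension picks up by intersecting over all separating $u$ via Proposition~\ref{prop:O(J)-equals-intersection-O(Ju)}.

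Injectivity is immediate. After fixing coordinates $\kappa$, an element of $\fsf_E(d\cdot J)(\mathfrak{X})$ becomes a tuple $F\in \mathcal{O}_\mathfrak{X}(d\cdot J)(\mathfrak{X})^d$ of global meromorphic functions on $\mathfrak{X}$. Since $A$ is geometrically integral over $\bF$, $A\otimes \CI$ is an integral domain and $\mathfrak{X}$ is reduced and irreducible; the rigid analytic identity principle then forces any meromorphic function vanishing on the non-empty admissible open $\mathfrak{D}$ to vanish on $\mathfrak{X}$.

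For surjectivity, given $\omega\in \fsf(E)$ with coordinate representative $F\in \CI\langle A\rangle^d$, fix a separating $u\in A$ and expand $\varphi_\kappa(u)=A_0+A_1\tau+\ldots+A_s\tau^s$. Setting $N:=A_0-\ell(u) I_d$ (satisfying $N^d=0$ by the Anderson condition) and $u_*:=u\otimes 1-1\otimes\ell(u)$, the special-function relation rewrites as
\[
F\;=\;\Bigl(\sum_{j=0}^{d-1}u_*^{-j-1}N^j\Bigr)\cdot\bigl(A_1 F^{(1)}+A_2 F^{(2)}+\ldots+A_s F^{(s)}\bigr),
\]
initially on $\mathfrak{D}$. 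I would induct on $n\geq 0$ to extend $F$ to a section of $\mathcal{O}_\mathfrak{X}(d\cdot J_u)(\mathfrak{X}_{q^n})^d$. The base case is given. For the inductive step, the scaling property $f\in \CI\langle A\rangle_\rho\Rightarrow f^{(1)}\in \CI\langle A\rangle_{\rho^q}$ implies that each $F^{(i)}$ is available on $\mathfrak{X}_{q^{n+i}}\supseteq \mathfrak{X}_{q^{n+1}}$ with poles along $\tau^i J_u\leq J_u$; together with the poles of $u_*^{-j-1}$ of order at most $d$ along the effective divisor $\fj_u$ (whose support is the finite union of conjugate primes $\fj^\sigma$ dividing $(u_*)$), the right-hand side above defines the desired extension in $\mathcal{O}_\mathfrak{X}(d\cdot J_u)(\mathfrak{X}_{q^{n+1}})^d$, agreeing with the previous one on $\mathfrak{X}_{q^n}$.

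Gluing along the admissible cover $(\mathfrak{X}_{q^n})_{n\geq 0}$ yields $\tilde F^u\in \mathcal{O}_\mathfrak{X}(d\cdot J_u)(\mathfrak{X})^d$. Running the argument for every separating $u$, the various $\tilde F^u$ all restrict to $F$ on $\mathfrak{D}$ and hence coincide globally by the injectivity shown earlier; applying Proposition~\ref{prop:O(J)-equals-intersection-O(Ju)} on each affinoid $\mathfrak{X}_\rho$ and reassembling then gives a common extension $\tilde F\in \mathcal{O}_\mathfrak{X}(d\cdot J)(\mathfrak{X})^d$. The full collection of special-function identities $(a\otimes 1)\tilde F=\varphi_\kappa(a)(\tilde F)$ for $a\in A$ holds on $\mathfrak{X}$ because it holds on $\mathfrak{D}$ and both sides are meromorphic on $\mathfrak{X}$. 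The hard part is the inductive step: one must simultaneously track the growth of radii of convergence under iterated twisting and the conjugate poles at the primes $\fj^\sigma$ with $\sigma\neq\mathrm{id}$; these spurious poles are unavoidable with a single $u$, and their elimination at the end is precisely what the intersection formula of Proposition~\ref{prop:O(J)-equals-intersection-O(Ju)} is designed to handle.
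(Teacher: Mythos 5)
Your overall architecture (continue along a single separating $u$, then cut the spurious conjugate poles by intersecting over all separating $u$ via Proposition~\ref{prop:O(J)-equals-intersection-O(Ju)}) matches the paper's, and your injectivity argument and divisor bookkeeping are fine. The genuine gap is in the inductive radius gain, and it occurs exactly where the induction must start. The correct scaling law is $\|f^{(1)}\|_\rho=\|f\|_{\rho^{1/q}}^q$, so $f\in\CI\langle A\rangle_\rho$ gives $f^{(i)}\in\CI\langle A\rangle_{\rho^{q^i}}$ (your $\mathfrak{X}_{q^{n+i}}$ should really be $\mathfrak{X}_{q^{nq^i}}$, a harmless indexing slip for $n\geq 1$). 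The fatal case is $n=0$: starting from $F\in\CI\langle A\rangle^d$, i.e.\ $\rho=1$, one has $1^{q^i}=1$, so the twists $F^{(i)}$ are again only known on $\mathfrak{D}$; the right-hand side of your rearranged functional equation is then defined on $\mathfrak{D}$ only and produces no extension to $\mathfrak{X}_q$, so the bootstrap never leaves the unit disk. This is not a bookkeeping artifact: membership in the Tate algebra together with the functional equation does not formally force convergence on any larger radius (twisting, or untwisting, a radius-one series yields again a radius-one series), so some genuinely analytic input is needed before the recursion can bite.

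That input, in the paper, is the exponential. By \cite[Thm.~3.11]{gazda-maurischat} (surjectivity of $\delta_u$), every $\omega\in\fsf(E)$ equals $\widehat{\exp}_E\bigl((u\otimes 1-1\otimes \partial\varphi_u)^{-1}(\lambda)\bigr)$ for some $\lambda\in\fd_{A/\bF[u]}\otimes_A\Lambda_E$; expanding $(u\otimes 1-1\otimes\partial\varphi_u)^{-1}=\sum_{j=0}^{d-1}(1\otimes n_u^j)\,(u\otimes 1-1\otimes\ell(u))^{-j-1}$ and the everywhere-convergent series of $\exp_E^\kappa$ gives the explicit formula of Lemma~\ref{lem:explicit-formula-for-sf}, a series which converges in every $\CI\langle A\rangle_\rho$ (only finitely many denominators fail to be invertible there, Lemma~\ref{lem:inversion-ao1-1oa}) and whose poles are visibly bounded by $d\cdot J_u$. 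The entireness of $\exp_E$ --- the rapid decay of the coefficients $e_i^\kappa$ --- is what substitutes for the radius gain your induction assumes but does not have. So to repair your proof you would need an independent argument that $F$ converges past radius $1$, which in practice amounts to invoking the exponential description of $\fsf(E)$ anyway; once such a single-$u$ continuation is in hand, your concluding gluing and the intersection over separating $u$ coincide with the paper's final step.
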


We begin with some preliminary results. Recall that there exists a unique $A$-linear morphism
\[
\exp_E:\Lie_E(\CI) \longrightarrow E(\CI)
\]
which is \emph{analytic} in the following sense: for any choice of $\kappa:E\stackrel{\sim}{\to}\bG_a^d$, the map $\exp_E^\kappa:\CI^d\to \CI^d$, obtained as $\kappa\circ \exp_E \circ (\Lie_\kappa)^{-1}$, can be written in the form
\begin{equation}
\exp^{\kappa}_E(x)=e_0^{\kappa}x+e_1^{\kappa}\tau(x)+e_2^{\kappa}\tau^2(x)+... \quad \text{where}~e_i^{\kappa}\in \Mat_{d}(\CI), \nonumber
\end{equation}
the series converging for any $x\in \CI^d$ \cite[\S2]{anderson}. By continuity, $\exp_E$ extends $A$-linearly to 
\[
\widehat{\exp}_E:A\hat{\otimes}\Lie_E(\CI)\longrightarrow A\hat{\otimes} E(\CI)
\]
(see \cite[\S 2]{gazda-maurischat}). \\

Let $u\in A$ be a separating element. We recall from \cite[Thm. 3.11]{gazda-maurischat} that we have an isomorphism of $A$-modules 
\[
\delta_u:\fd_{A/\bF[u]}\otimes_A \Lambda_E\stackrel{\sim}{\longrightarrow} \fsf(E)
\]
mapping an element $\lambda\in \fd_{A/\bF[u]}\otimes_A \Lambda_E$ to $\omega_\lambda:=\widehat{\exp}_E((u\otimes 1-1\otimes \partial \varphi_{u})^{-1}(\lambda))$ seen in $A\hat{\otimes} E(\CI)$.

\begin{Remark}
The map $\delta_u$ is not exactly the one that appears in \cite{gazda-maurischat}, but minus that one.
\end{Remark}

\begin{Lemma}\label{lem:explicit-formula-for-sf}
Let $n_u$ be the $\CI$-linear endomorphism of $\Lie_E(\CI)$ given by $\partial\varphi_u-\ell(u)$. Then, the composition
\[
\fsf(E)\hookrightarrow \mathcal{E}(\mathcal{O}_\mathfrak{X})(\mathfrak{D})\stackrel{\sim}{\longrightarrow} \cO_{\mathfrak{X}}(\mathfrak{D})^d=\CI\langle A \rangle^d,
\]
where the second arrow is $(m_\kappa\circ \varepsilon_\kappa)(\mathfrak{D})$, maps $\omega_\lambda$ to the converging sum
\[
\sum_{j=0}^{d-1}{\sum_{i=0}^{\infty}{\frac{(1\otimes e_i^\kappa)[(1\otimes \Lie_\kappa)(1\otimes n_u^j)(\lambda)]^{(i)}}{(u\otimes 1-1\otimes \ell(u)^{q^i})^{j+1}} }}.
\]
\end{Lemma}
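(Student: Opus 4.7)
The plan is to unfold the definition $\omega_\lambda = \widehat{\exp}_E((u\otimes 1 - 1\otimes \partial\varphi_u)^{-1}(\lambda))$ provided by \cite[Thm.~3.11]{gazda-maurischat} in coordinates supplied by $\kappa$. First, via $\Lie_\kappa$ I identify $\partial\varphi_u$ acting on $\Lie_E(\CI)\cong \CI^d$ with $\ell(u)\cdot\id + \Lie_\kappa\circ n_u\circ \Lie_\kappa^{-1}$. The Anderson axiom $(\partial\varphi_u - \ell(u))^d = 0$ forces $n_u$ to be nilpotent of index at most $d$, so formally, in a localization where $u\otimes 1 - 1\otimes\ell(u)$ is invertible, one has the finite geometric expansion
\[
(u\otimes 1 - 1\otimes \partial\varphi_u)^{-1}(\lambda) = \sum_{j=0}^{d-1} \frac{(1\otimes n_u^j)(\lambda)}{(u\otimes 1 - 1\otimes \ell(u))^{j+1}}.
\]

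Next, I apply $\widehat{\exp}_E$ through its coordinate-wise power series $\widehat{\exp}_E^\kappa(y) = \sum_{i\geq 0}(1\otimes e_i^\kappa)\, y^{(i)}$. Since $\tau$ fixes $u\otimes 1 \in A\otimes\CI$ and raises $1\otimes\ell(u)$ to the $q$th power, one finds
\[
\tau^i\!\left(\frac{1}{u\otimes 1 - 1\otimes \ell(u)}\right) = \frac{1}{u\otimes 1 - 1\otimes \ell(u)^{q^i}}.
\]
Substituting the finite $j$-sum above into the exponential series, distributing $(1\otimes e_i^\kappa)$ and the Frobenius $(\cdot)^{(i)}$ over the finitely many $j$-terms, and swapping the order of summation, yields exactly the claimed double sum. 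The factor $(1\otimes \Lie_\kappa)$ in the statement simply reflects the identification $\Lie_E(\CI)\cong\CI^d$ under which each summand is to be read as an element of $\CI\langle A\rangle^d$.

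The main obstacle is justifying the intermediate manipulations: the object $(u\otimes 1 - 1\otimes \partial\varphi_u)^{-1}(\lambda)$ does not itself belong to $A\hat{\otimes}\Lie_E(\CI)$, only to a localization of this module at $u\otimes 1 - 1\otimes\ell(u)$. One must therefore interpret the above computation term by term in $\mathcal{E}(\mathcal{M}_\mathfrak{X})(\mathfrak{X})$, and then verify that, once reorganized, the resulting double series converges in the Tate algebra $\CI\langle A\rangle^d$. This convergence is automatic a posteriori since \cite[Thm.~3.11]{gazda-maurischat} already places $\omega_\lambda$ inside $A\hat{\otimes}E(\CI) = \CI\langle A\rangle^d$; alternatively one may argue directly using Lemma \ref{lem:inversion-ao1-1oa} together with the rapid decay of the entire coefficients $e_i^\kappa$ to control the denominators $(u\otimes 1 - 1\otimes \ell(u)^{q^i})^{-(j+1)}$ for $i\geq 1$, while the finitely many $i=0$ terms assemble into an element of the same localization shared by $\omega_\lambda$.
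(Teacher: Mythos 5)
Your proposal is correct and follows essentially the same route as the paper: expand $(u\otimes 1-1\otimes\partial\varphi_u)^{-1}$ as a finite geometric sum using the nilpotency of $n_u$, feed it into the coordinate expansion $\widehat{\exp}^\kappa_E=\sum_i (1\otimes e_i^\kappa)\tau^i$ with $\tau$ turning the denominators into $u\otimes 1-1\otimes\ell(u)^{q^i}$, and get convergence from the entirety of $\exp_E^\kappa$. The one inaccuracy is your claim that $(u\otimes 1-1\otimes\partial\varphi_u)^{-1}(\lambda)$ lives only in a localization: since $|\ell(u)|>1$ the element $u\otimes 1-1\otimes\ell(u)$ is already invertible in $\CI\langle A\rangle$ (the point $\fj$ and its twists lie outside $\mathfrak{D}$), so the entire computation takes place in $A\hat{\otimes}\Lie_E(\CI)$ and $A\hat{\otimes}E(\CI)$ exactly as in the paper, and no detour through $\mathcal{E}(\cM_\mathfrak{X})(\mathfrak{X})$ or an a posteriori convergence argument is needed.
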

\begin{proof}
Let us first contemplate the diagram
\begin{equation}
\begin{tikzcd}
A\hat{\otimes} \Lie_E(\CI) \arrow[r,"\widehat{\exp}_E"]\arrow[d,"1\otimes \Lie_E \kappa"']& A\hat{\otimes}E(\CI) \arrow[r,"\sim"]\arrow[d,"1\otimes \kappa"] & \mathcal{E}(\mathcal{O}_\mathfrak{X})(\mathfrak{D})\arrow[d,"(m_\kappa\circ \varepsilon_\kappa)(\mathfrak{D})"] \\
\CI\langle A \rangle^d \arrow[r,"\widehat{\exp}_E^{\kappa}"] & \CI\langle A \rangle^d \arrow[r,"="] & \cO_{\mathfrak{X}}(\mathfrak{D})^d
\end{tikzcd}
\end{equation}
where we defined $\widehat{\exp}^{\kappa}_E$ as the unique map making the left-hand square commute. That the right-hand square commutes is a matter of definition. 

Now, observe that $n_u$ is nilpotent of order at most $d$ by definition of an Anderson $A$-module. Hence, we have
\begin{equation}
(u\otimes 1-1\otimes \partial \varphi_{u})^{-1}=\sum_{j=0}^{d-1}{\frac{1\otimes n_u^j}{(u\otimes 1-1\otimes \ell(u))^{j+1}}}. \nonumber
\end{equation}
and then, seeing $\omega_\lambda\in A\hat{\otimes} E(\CI)$,
\begin{align}\label{computation-residue}
(1\otimes \kappa)(\omega_\lambda)&=((1\otimes\kappa)\circ \widehat{\exp}_E)\left((u\otimes 1-1\otimes \partial \varphi_{u})^{-1}(\lambda)\right) \nonumber\\
&= [\widehat{\exp}_E^{\kappa}\circ (1\otimes \Lie_\kappa)]\left( \sum_{j=0}^{d-1}{\frac{1\otimes n_u^j}{(u\otimes 1-1\otimes \ell(u))^{j+1}}}\right)(\lambda) \nonumber \\
&=\sum_{j=0}^{d-1}{\sum_{i=0}^{\infty}{\frac{(1\otimes e_i^\kappa)[(1\otimes \Lie_\kappa)(1\otimes n_u^j)(\lambda)]^{(i)}}{(u\otimes 1-1\otimes \ell(u)^{q^i})^{j+1}} }}.
\end{align}
That the upper series converge is immediate from the convergence of $\exp^\kappa_E$.
\end{proof}

\begin{proof}[Proof of Theorem \ref{thm:continuation-sf}]
Let $u\in A$ be a separating element. We have a diagram of $A$-modules
\begin{equation}
\begin{tikzcd}[column sep=4em, row sep=4em]
\fsf_E(d\cdot J_u)(\mathfrak{X}) \arrow[r,hook]\arrow[d,hook] & \mathcal{E}(d\cdot J_u)(\mathfrak{X}) \arrow[r,"(m_\kappa\circ \varepsilon_\kappa)(\mathfrak{X})","\sim"']\arrow[d,hook] & \mathcal{O}_\mathfrak{X}(d\cdot J_u)(\mathfrak{X})^d \arrow[d,hook]  \\
\fsf_E(\cO_\mathfrak{X})(\mathfrak{D}) \arrow[r,hook]\arrow[rru,dashed]\arrow[ru,dashdotted] & \mathcal{E}(\mathcal{O}_\mathfrak{X})(\mathfrak{D}) \arrow[r,"(m_\kappa\circ \varepsilon_\kappa)(\mathfrak{D})","\sim"'] & \mathcal{O}_\mathfrak{X}(\mathfrak{D})^d=\CI\langle A \rangle^d 
\end{tikzcd}
\end{equation}
where, by the previous Lemma \ref{lem:explicit-formula-for-sf}, the composition of the lower horizontal maps factors through the right-hand vertical map (represented as the dashed arrow). The map $(m_\kappa\circ \varepsilon_\kappa)(\mathfrak{X})$ being an isomorphism, the latter factors through $\mathcal{E}(d\cdot J_u)(\mathfrak{X})$ (represented as the dashdotted arrow). As by construction, $\fsf_E(d\cdot J_u)(\mathfrak{X})$ is the intersection of $\fsf_E(\cO_\mathfrak{X})(\mathfrak{D})$ and $\mathcal{E}(d\cdot J_u)(\mathfrak{X})$ in $\mathcal{E}(\mathcal{O}_\mathfrak{X})(\mathfrak{D})$, we thus obtain that the inclusion
\[
\fsf_E(d\cdot J_u)(\mathfrak{X}) \hookrightarrow \fsf_E(\cO_\mathfrak{X})(\mathfrak{D})
\]
is an equality. As this holds for all $u\in A$ separating, Proposition \ref{prop:O(J)-equals-intersection-O(Ju)} implies 
\[
\fsf_E(d\cdot J)=\bigcap_{\substack{u\in A \\ u~\text{separating}}} \fsf_E(d\cdot J_u)
\]
hence that $\fsf_E(d\cdot J)(\mathfrak{X}) =\fsf_E(\cO_\mathfrak{X})(\mathfrak{D})$ as desired.
\end{proof}

Let $u$ be a separating element in $A$. The relative different ideal $\fd_{A/\bF[u]}$ of the separable extension $K/\bF(u)$ is generated by the elements of the form $f_s'(s)$ where $s\in A$ is such that $K=\bF(s,u)$ and $f_s$ is the minimal polynomial of $s$ over $\bF(u)$. It is also described as 
\begin{equation}
\fd_{A/\bF[u]}=\{x\in A~|~\forall y\in A:~xdy\in Adu\}. \nonumber
\end{equation}
As such, we have an $A$-linear isomorphism $\mu:\Omega_{A/\bF}^1\otimes_A \fd_{A/\bF[u]}\stackrel{\sim}{\to} Adu$ given by multiplication $\omega\otimes x\mapsto x\omega$. The main result of this note is the following:

\begin{Theorem}\label{thm:res-inverse}
We have $\res_\fj(\Omega^1_{A/\bF}\otimes_A \fsf_E(d\cdot J))= \Lambda_E$ as submodules of $\Lie_E(\CI)$. In addition, the diagram
\begin{equation}\label{eq:residue-diagram}
\begin{tikzcd}
\Omega_{A/\bF}^1\otimes_A \fd_{A/\bF[u]}\otimes_A \Lambda_E \arrow[d,"\mu\otimes \id_{\Lambda}"',"\wr"]\arrow[r,"\id_{\Omega}\otimes_A \delta_u","\sim"'] & \Omega_{A/\bF}^1\otimes_A\fsf(E)  \\
\Lambda_E & \Omega_{A/\bF}^1\otimes_A\fsf_E(d\cdot J)(\mathfrak{X}) \arrow[l,"\res_\fj"'] \arrow[u,"\operatorname{Thm}.~\ref{thm:continuation-sf}"',"\wr"]
\end{tikzcd}
\end{equation}
commutes.
\end{Theorem}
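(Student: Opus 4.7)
The plan is to reduce the theorem to checking the commutativity of diagram \eqref{eq:residue-diagram} on pure tensors $dy \otimes x \otimes \lambda$ with $y \in A$, $x \in \fd_{A/\bF[u]}$, and $\lambda \in \Lambda_E$. Once commutativity is established, the first assertion follows immediately: the top-horizontal arrow $\id_{\Omega} \otimes \delta_u$ and the right-vertical arrow (Theorem \ref{thm:continuation-sf}) are isomorphisms, while $\mu$ surjects onto $Adu$ by the defining property of the different and is thus an isomorphism between rank-one invertible $A$-modules. Consequently the image of $\res_\fj$ on $\Omega^1_{A/\bF} \otimes_A \fsf_E(d\cdot J)(\fX)$ coincides with the image of $\mu \otimes \id_{\Lambda_E}$, which is $\Lambda_E$.

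First I would compute $\res_\fj(dy \otimes \omega_{x\otimes\lambda})$ using the explicit formula of Lemma \ref{lem:explicit-formula-for-sf}. Since $\ell(u) \in \CI$ is transcendental over $\bF$, the denominators $u\otimes 1 - 1\otimes \ell(u)^{q^i}$ for $i \geq 1$ are units in the local ring at $\fj$, so only the $i=0$ summand contributes to the residue. Using that $e_0^\kappa = I_d$ (a consequence of $\Lie(\exp_E)$ being the identity), the identity $dy = (dy/du)\, d\pi$ near $\fj$ with $\pi := u\otimes 1 - 1\otimes \ell(u)$, and the Taylor expansion $f = \sum_{k\geq 0} \alpha_k \pi^k$ at $\fj$ of the element $f := x \cdot (dy/du) \in A$ (which lies in $A$ precisely because $x \in \fd_{A/\bF[u]}$), the residue reduces after an application of $\Lie_\kappa^{-1}$ to
\[
\res_\fj(dy \otimes \omega_{x\otimes\lambda}) = \sum_{j=0}^{d-1} \alpha_j\, n_u^j(\lambda) \in \Lie_E(\CI).
\]

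The crucial step is to identify this sum with $\partial\varphi_f(\lambda)$. For this, I extend $\partial\varphi$ to a $\CI$-algebra map $\widetilde\varphi: A\otimes \CI \to \End_\CI(\Lie_E(\CI))$ by $a\otimes c \mapsto c\cdot \partial\varphi_a$. Because $\Lie_E(\CI)$ is an $(A\otimes \CI)$-module of length $d$ supported at the single maximal ideal $\fj$, the ideal $\fj^d$ annihilates it, so $\widetilde\varphi$ factors through $(A\otimes\CI)/\fj^d \cong \CI[\pi]/(\pi^d)$, under which $\pi$ goes to $n_u$. The image of $f \otimes 1$ is its truncated Taylor expansion $\sum_{k=0}^{d-1} \alpha_k \pi^k$, so $\partial\varphi_f = \sum_{k=0}^{d-1}\alpha_k n_u^k$ and the residue equals $\partial\varphi_f(\lambda) = f \cdot \lambda \in \Lambda_E$. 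On the other path of the diagram, $\mu(dy \otimes x) = xdy = fdu$ maps to $f\lambda$ under the canonical isomorphism $Adu \otimes_A \Lambda_E \cong \Lambda_E$, so both paths agree and the diagram commutes.

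The main obstacle is precisely this third step: passing from the formal Taylor expansion of $f \in A$ at $\fj$ to the operator identity $\partial\varphi_f = \sum_k \alpha_k n_u^k$. This is the unique place where the Anderson-module hypothesis, namely the nilpotency of $\partial\varphi_a - \ell(a)$ for every $a \in A$, enters in an essential way; the remainder of the argument consists of residue computations and local manipulations at $\fj$ that follow routinely from the explicit formulas established earlier.
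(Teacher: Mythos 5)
Your proof is correct, and it rests on the same machinery as the paper's -- verify commutativity of \eqref{eq:residue-diagram} on generators using the explicit expansion of Lemma \ref{lem:explicit-formula-for-sf} and a residue computation at $\fj$ with uniformizer $\pi=u\otimes 1-1\otimes\ell(u)$ -- but your reduction is genuinely different. The paper tests the diagram only on the elements $Du\otimes\lambda$, where $Du=\mu^{-1}(du)$ generates the invertible module $\Omega^1_{A/\bF}\otimes_A\fd_{A/\bF[u]}$; with that choice the numerators recombine into $du\otimes n_u^j\lambda$, and since $du/d\pi=1$ every term with $j\geq 1$ (as well as every twist $n\geq 1$) has vanishing residue, so the answer $\lambda$ drops out with no further input. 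You instead test on arbitrary pure tensors $dy\otimes x\otimes\lambda$, where the higher-order terms survive and give $\sum_{j<d}\alpha_j n_u^j(\lambda)$, and you then need the extra identification $\partial\varphi_f=\sum_{k<d}\alpha_k n_u^k$ for $f=x\,dy/du\in A$, obtained by factoring the extended action $\widetilde\varphi:A\otimes\CI\to\End_{\CI}(\Lie_E(\CI))$ through $(A\otimes\CI)/\fj^d\cong\CI[\pi]/(\pi^d)$. This costs a bit more work than the paper's shortcut, but it buys a statement the paper's choice of test elements hides: the residue map sends $dy\otimes\omega_{x\otimes\lambda}$ to $\partial\varphi_f(\lambda)$, so it is visibly compatible with the $A$-action through $\partial\varphi$, and the role of the nilpotency hypothesis is isolated exactly where you say it is. The deduction of $\res_\fj(\Omega^1_{A/\bF}\otimes_A\fsf_E(d\cdot J)(\fX))=\Lambda_E$ from commutativity and the invertibility of the other three arrows is the same in both arguments (the paper leaves it implicit).

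Two small points in your crucial step deserve a justification rather than an assertion. First, that $\fj^d$ annihilates $\Lie_E(\CI)$: the support claim itself already uses the Anderson hypothesis, e.g.\ note that the image of $\fj$ in the finite-dimensional commutative algebra $\widetilde\varphi(A\otimes\CI)$ is generated by the nilpotent elements $\widetilde\varphi(a\otimes 1-1\otimes\ell(a))$, hence is a nilpotent ideal, and the filtration $\fj^kV$ of $V=\Lie_E(\CI)$ therefore strictly decreases until it vanishes, which takes at most $d=\dim_{\CI}V$ steps. Second, $e_0^{\kappa}=\mathrm{Id}$ is Anderson's normalization $\Lie(\exp_E)=\id$; this is standard and is also used implicitly in the paper's own computation, but it should be cited. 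With these remarks added, your argument is complete.
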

\begin{proof}
The differential $du$ has a unique preimage $\mu^{-1}(du)\in\Omega_{A/\bF}^1\otimes_A \fd_{A/\bF[u]}$ which we write as 
\begin{equation}
Du:=\sum_i{dt_i\otimes_A f_{s_i}'(s_i)}. \nonumber
\end{equation}
The inverse $\Lambda_E\stackrel{\sim}{\to} \Omega_{A/\bF}^1\otimes_A \fd_{A/\bF[u]}\otimes_A \Lambda_E$ of the left-vertical map in diagram \eqref{eq:residue-diagram} maps $\lambda$ to $Du\otimes_A \lambda=\sum_i{dt_i\otimes_A f_{s_i}'(s_i)\otimes_A \lambda}$. Once composed with $\id_\Omega\otimes_{A}\delta_u$, it is mapped to 
\begin{equation}
\sum_i{dt_i\otimes_A \widehat{\exp}_E\left((u\otimes 1-1\otimes \partial \varphi_u)^{-1}(f'_{s_i}(s_i)\otimes 1)(\lambda)\right)}\in \Omega_{A/\bF}^1\otimes_A \fsf_E(d\cdot J)(\mathfrak{X}). \nonumber
\end{equation}
To conclude, we need to compute $\res_\fj$ of the above. Through a choice of coordinates $\kappa:E\stackrel{\sim}{\to}\bG_a^d$, it boils down thanks to Lemma \ref{lem:explicit-formula-for-sf} to the computation of 
\begin{equation}
\sum_i{\operatorname{residue}_{\fj}\left(\sum_{j=0}^{d-1}{\sum_{n=0}^{\infty}{dt_i\otimes_A\frac{ f'_{s_i}(s_i)\otimes \Lie_\kappa^{-1}\left( e_n^\kappa(\Lie_\kappa(n_u^j(\lambda)))^{(n)}\right)}{(u\otimes 1-1\otimes \ell(u)^{q^n})^{j+1}} }}\right)}. \nonumber
\end{equation}
Only the terms in $(j,n)=(0,0)$ are relevant, and we are left with:
\begin{equation}
\sum_i{\operatorname{residue}_{\fj}\left(dt_i\otimes_A \frac{f'_{s_i}(s_i)\otimes \lambda}{u\otimes 1-1\otimes \ell(u)}\right)} = \operatorname{residue}_{\fj}\left(\frac{du \otimes \lambda}{u\otimes 1-1\otimes \ell(u)}\right) = \lambda,
\end{equation}
since $u\otimes 1-1\otimes \ell(u)$ is a uniformizer at $\fj$.
\end{proof}

\begin{Corollary}\label{cor:no-pole-no-sf}
We have $\fsf_E(\mathcal{O}_\mathfrak{X})(\mathfrak{X})=(0)$.
\end{Corollary}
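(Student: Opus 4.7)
The plan is to exploit Theorem~\ref{thm:res-inverse} directly: since $\res_\fj$ becomes an isomorphism when restricted to $\Omega^1_{A/\bF}\otimes_A \fsf_E(d\cdot J)(\mathfrak{X})$, it suffices to show that every element of the subspace $\fsf_E(\cO_\mathfrak{X})(\mathfrak{X})$ has vanishing residue. First I would use the natural inclusions of sheaves $\cO_\mathfrak{X}\hookrightarrow \cO_\mathfrak{X}(d\cdot J)\hookrightarrow \mathcal{J}$, which induce compatible inclusions
\[
\fsf_E(\cO_\mathfrak{X})(\mathfrak{X}) \hookrightarrow \fsf_E(d\cdot J)(\mathfrak{X}) \hookrightarrow \mathcal{E}(\mathcal{J})(\mathfrak{X}),
\]
so that the map $\res_\fj$ is defined on a global holomorphic special function through the second inclusion.

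Next, fix a choice of coordinates $\kappa$ and let $\omega \in \fsf_E(\cO_\mathfrak{X})(\mathfrak{X})$. By design of $\mathcal{E}(\cO_\mathfrak{X})$, the isomorphism $(m_\kappa\circ \varepsilon_\kappa)(\mathfrak{X})$ sends $\omega$ to a $d$-tuple $F\in \cO_\mathfrak{X}(\mathfrak{X})^d$ of genuinely holomorphic functions on all of $\mathfrak{X}$. In particular, for every $a\in A$, the element $da\otimes F$ in $\Omega_{A/\bF}^1\otimes_A \mathcal{J}(\mathfrak{X})^d$ is regular at $\fj$, so its classical residue vanishes. By the defining diagram of $\res_\fj$, this forces $\res_\fj(da\otimes_A \omega)=0$ in $\Lie_E(\CI)$, and consequently $\res_\fj(\Omega^1_{A/\bF}\otimes_A \omega)=(0)$.

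Now invoke Theorem~\ref{thm:res-inverse}: the composition $\res_\fj$ on $\Omega^1_{A/\bF}\otimes_A \fsf_E(d\cdot J)(\mathfrak{X})$ is an isomorphism onto $\Lambda_E$, hence injective. Therefore $\Omega^1_{A/\bF}\otimes_A \omega = 0$ inside $\Omega^1_{A/\bF}\otimes_A \fsf_E(d\cdot J)(\mathfrak{X})$. Since $A$ is the coordinate ring of the smooth affine curve $C\setminus\{\infty\}$, the module of K\"ahler differentials $\Omega^1_{A/\bF}$ is an invertible $A$-module, and tensoring with an invertible module is a faithfully exact functor on $A$-modules. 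This yields $\omega=0$.

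I do not expect any serious obstacle here: the statement really is an immediate consequence of Theorem~\ref{thm:res-inverse} together with the trivial fact that a holomorphic function has no residue. The only point requiring a modicum of care is the very last step, where one must use invertibility (not merely non-triviality) of $\Omega^1_{A/\bF}$ to descend the vanishing from the tensor product back to $\omega$ itself.
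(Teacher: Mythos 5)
Your proposal is correct and follows essentially the same route as the paper's own proof: holomorphic special functions have vanishing residue at $\fj$, the injectivity of $\res_\fj$ on $\Omega^1_{A/\bF}\otimes_A \fsf_E(d\cdot J)(\mathfrak{X})$ from Theorem~\ref{thm:res-inverse} forces the tensor with $\Omega^1_{A/\bF}$ to vanish, and invertibility of $\Omega^1_{A/\bF}$ descends this to $\fsf_E(\cO_\mathfrak{X})(\mathfrak{X})=(0)$. Your extra care about faithfulness of tensoring with the invertible module is exactly the point the paper uses implicitly.
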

\begin{proof}
Since $\Omega_{A/\bF}^1$ is invertible as an $A$-module, it suffices to show that $\Omega_{A/\bF}^1\otimes_A \fsf_{E}(\mathcal{O}_\mathfrak{X})(\mathfrak{X})$ is trivial. Yet, any element $\eta$ in the latter module satisfies $\res_\fj(\eta)=0$. This implies $\eta=0$ by Theorem \ref{thm:res-inverse}.
\end{proof}

\subsection*{Filtration on $\fsf(E)$}\label{subsec:filtration}
As already mentioned in the introduction, the $A$-module $\fsf(E)$ carries a natural increasing finite filtration indexed by the order of poles along the divisor $J$:
\begin{equation}\label{eq:filtration-fsf}
(0)\stackrel{\text{Cor~}\ref{cor:no-pole-no-sf}}{=} \fsf_E(\cO_\mathfrak{X})(\mathfrak{X}) \hookrightarrow \fsf_E(J)(\mathfrak{X})\hookrightarrow \cdots \hookrightarrow \fsf_E(d\cdot J)(\mathfrak{X})\stackrel{\text{Thm~}\ref{thm:continuation-sf}}{=} \fsf(E).
\end{equation}
We conclude this text by giving a few examples of the above filtration built out from the Carlitz $t$-module. These examples show that the filtration steps can be quite arbitrary.

\subsubsection*{Carlitz tensor powers}
Let $C=\bP^1_\bF$ and let $\infty$ be the point $[0:1]$. We identify $A$ with $\bF[t]$ and $A\otimes \CI$ with the polynomial ring $\CI[t]$. We also denote $\ell(t)$ by $\theta$. Let $n>0$. The \emph{$n$th tensor power of the Carlitz $t$-module over $\CI$} is the unique $A$-module $\mathsf{C}^{\otimes n}$ which equals $\bG_a^n$ as an $\bF$-vector space scheme, and for which 
\[
\varphi(t)=\begin{pmatrix}
\theta & 1 & & \\
 & \theta & \ddots & \\
 & & \ddots & 1 \\
\tau & & & \theta  
\end{pmatrix} \in \Mat_n(\CI\{\tau\})\cong \End_{\bF-\text{vs}/\CI}(\bG_a^n).
\]
As sub-$\bF[t]$-modules of $A\hat{\otimes}\mathsf{C}^{\otimes n}(\CI)=\CI\langle t \rangle^n$, we have 
\[
\fsf(\mathsf{C}^{\otimes n})=\bF[t]\cdot \begin{pmatrix}
1 \\ (t-\theta) \\ \vdots \\ (t-\theta)^{n-1}
\end{pmatrix}\omega(t)^n,
\]
where $\omega$ is the Anderson-Thakur function as in \eqref{eq:omega}. Since $\omega(t)^n$ has a pole of order $n$ at $\fj=(t-\theta)$, the filtration \eqref{eq:filtration-fsf} has only one jump at the $n$th degree:
\[
(0)=\fsf_{\mathsf{C}^{\otimes n}}((n-1)\cdot J)(\mathfrak{X})\subsetneq \fsf_{\mathsf{C}^{\otimes n}}(n\cdot J)(\mathfrak{X})=\fsf(\mathsf{C}^{\otimes n}).
\]

\subsubsection*{Sum of Carlitz tensor powers}
Let $E$ be the product of $m$ copies of $\mathsf{C}^{\otimes n}$. Then, $E$ has dimension $mn$, and $\fsf(E)=\fsf(\mathsf{C}^{\otimes n})^{\oplus m}$. In particular, 
the filtration \eqref{eq:filtration-fsf} has only one jump at the $n$th degree:
\[
(0)=\fsf_{E}((n-1)\cdot J)(\mathfrak{X})\subsetneq \fsf_{E}(n\cdot J)(\mathfrak{X})=\fsf_{E}(mn\cdot J)(\mathfrak{X})=\fsf(E).
\]

\subsubsection*{Prolongations of the Carlitz module}
For $k\geq 0$, let $\rho_k\mathsf{C}$ be the $k$th prolongation of $\mathsf{C}$ as defined in \cite{maurischat-prolongations}. It is an Anderson $A$-module which equals $\bG_a^{k+1}$ as an $\bF$-module scheme. The module $\fsf(\rho_k\mathsf{C})$ is free rank $k+1$ over $\bF[t]$, having the elements 
\[
\omega_1:=\begin{pmatrix}
\omega(t) \\ 0 \\ \vdots \\ 0
\end{pmatrix}, \quad 
\omega_2:=\begin{pmatrix}
\partial_t^{(1)}\omega(t) \\ \omega(t) \\ \vdots \\ 0
\end{pmatrix},~\cdots~,
\omega_{k+1}:=\begin{pmatrix}
\partial_t^{(k)}\omega(t) \\ \partial_t^{(k-1)}\omega(t) \\ \vdots \\ \omega(t)
\end{pmatrix}.
\]
for basis (here $\partial_t^{(j)}$ refers to the $j$th higher derivative). Therefore, the filtration \eqref{eq:filtration-fsf} has exactly $(k+1)$-jumps, and 
\[
\fsf_{\rho_k\mathsf{C}}(d\cdot  J)(\mathfrak{X})=\bigoplus_{i=1}^{d}\bF[t]\cdot \omega_i.
\]

\subsubsection*{Summary}

For appropriate choices of $n$, $m$, and $k$
in these three cases, we obtain $t$-modules of dimension $d$ with different filtration steps.
\begin{itemize}
    \item For the $d$th Carlitz tensor power $\mathsf{C}^{\otimes d}$, there is one step and this one is at the $d$th degree.
\item For the product of $d$ copies of $\mathsf{C}$, \emph{i.e.}~$\mathsf{C}^{\oplus d}$, there is also one step, but this time at degree $1$.
\item For the $(d-1)$st prolongation, there is a step at each degree.
\end{itemize}
Of course, these examples can easily be extended, \emph{e.g.} to products of different tensor powers of the Carlitz module, to obtain even more variation in the steps.
But this is not in the scope of this manuscript.

\def\cprime{$'$}

\end{document}